\tikzset{
    >=stealth',
    punkt/.style={
           rectangle,
           rounded corners,
           draw=black, very thick, 
           text width=8em,
           minimum height=2em,
           text centered, fill=white, drop shadow},
    punkta/.style={
           rectangle,
           rounded corners,
           draw=black, very thick,
           text width=10em,
           minimum height=2em,
           text centered, fill=white, drop shadow},
    punktaka/.style={
           rectangle,
           rounded corners,
           draw=black, very thick,
           text width=14em,
           minimum height=2em,
           text centered, fill=white, drop shadow},       
    punktaa/.style={
           rectangle,
           rounded corners,
           draw=black, very thick,
           text width=15em,
           minimum height=2em,
           text centered, fill=white, drop shadow},
    punktaaa/.style={
           rectangle,
           rounded corners,
           draw=black, very thick,
           text width=10em,
           minimum height=2em,
           text centered, fill=white, drop shadow},
    pil/.style={
           ->,
           thick,
           shorten <=2pt,
           shorten >=2pt,}
}
\definecolor{mycolor}{rgb}{0.122, 0.435, 0.698}
\newcommand{\warsawApp}[2] 
{
\FPeval{\points}{4-((#1)/2)}
\begin{tikzpicture}[scale=5,domain=0:1] 
\tikzstyle{every node}=[circle, draw, fill=black!50,
                        inner sep=0pt, minimum width=\points pt]
\FPeval{\step}{1/2^((#1))} 
\FPeval{\twostep}{1/2^(2*((#1)))}
\FPeval{\twostepone}{1/2^(2*((#1))-1)}
\FPeval{\twosteptwo}{1/2^(2*((#1))-2)}
\FPeval{\stepone}{1/2^(((#1))-1)}
\FPeval{\B}{(1/2)+(1/2^(((#1))-1))}
\FPeval{\A}{(1/2)-(1/2^(((#1))-1))}
\FPeval{\C}{(1/2)+(1/2^(((#1))))}
\FPeval{\D}{1-(1/2^((#1)))}
\FPeval{\E}{1/2+(3/2^((#1)))}
\FPeval{\kminustwo}{((#1))-1}
\pgfmathsetmacro{\puntosa}{int((((#1))-1)/2)}
\pgfmathsetmacro{\puntosb}{int((((#1))-2)/2)}
\ifthenelse{#2=1}{
								\draw[step=\stepone,gray,very thin] (-0.1,-0.1) grid (1.1,1.1);
							  }{}	
\draw (0,1)--(0,0)--(1,0)--(1,1)--(1/2,1)--(1/2,1/2)--(1/4,1/2)--(1/4,1)--(1/8,1)--
(1/8,1/2)--(1/16,1/2)--(1/16,1)--(1/32,1)--(1/32,1/2)--(1/64,1/2)--(1/64,1);
\draw
\foreach \x in {0,\stepone,...,1} {(\x,0) node {} }

\foreach \x in {0,\stepone,...,\A} {(0,\x) node {} }
\foreach \x in {0,\stepone,...,\A} {(1,\x) node {} }

\foreach \l in {0,1,...,\kminustwo} {\foreach \m in {0.5,\B,...,1} {(1/2^\l,\m) node {} }}
\foreach \m in {0.5,\B,...,1} {(0,\m) node {} };

\foreach \x in {1,...,\puntosa} {
													\pgfmathsetmacro{\u}{1/2^(2*\x-1)}
													\pgfmathsetmacro{\v}{1/2^(2*\x-1)+\stepone}
													\pgfmathsetmacro{\w}{1/2^(2*\x-2)}
													\foreach \z in {\u,\v,...,\w} {
																								\draw node at (\z,1) {} ;
																								}
										          }
										          
\ifthenelse{#1 > 3}{
                             \foreach \x in {1,...,\puntosb} {
													                          \pgfmathsetmacro{\u}{1/2^(2*\x)}
													                          \pgfmathsetmacro{\v}{1/2^(2*\x)+\stepone}
													                          \pgfmathsetmacro{\w}{1/2^(2*\x-1)}
													                          \foreach \z in {\u,\v,...,\w} {
																								                          \draw node at (\z,0.5) {} ;
																								                         }
										                                     }
							   }{}			                               			
\ifthenelse{#2=0}{
							\foreach \m in {\C,\E,...,\D} {\draw node at (\step,\m) {} ;}	
							}{}
\end{tikzpicture}}
\newcommand{\warsawPol}[2] 
{
\FPeval{\points}{4-((#1)/2)}
\begin{tikzpicture}[scale=#2,domain=0:1] 
\tikzstyle{every node}=[circle, draw, fill=black!50,
                        inner sep=0pt, minimum width=\points pt]
\FPeval{\step}{1/2^((#1))} 
\FPeval{\twostep}{1/2^(2*((#1)))}
\FPeval{\twostepone}{1/2^(2*((#1))-1)}
\FPeval{\twosteptwo}{1/2^(2*((#1))-2)}
\FPeval{\stepone}{1/2^(((#1))-1)}
\FPeval{\steptwo}{1/2^(((#1))-2)}
\FPeval{\B}{(1/2)+(1/2^(((#1))-1))}
\FPeval{\BB}{(1/2)+(1/2^(((#1))-2))}
\FPeval{\A}{(1/2)-(1/2^(((#1))-1))}
\FPeval{\C}{(1/2)+(1/2^(((#1))))}
\FPeval{\D}{1-(1/2^((#1)))}
\FPeval{\E}{1/2+(3/2^((#1)))}
\FPeval{\F}{1-(1/2^(((#1))-1))}
\FPeval{\kminustwo}{((#1))-1}
\pgfmathsetmacro{\puntosa}{int((((#1))-1)/2)}
\pgfmathsetmacro{\puntosb}{int((((#1))-2)/2)}
\pgfmathsetmacro{\puntosc}{int((((#1))+1)/2)}
\draw (0,1)--(0,0)--(1,0)--(1,1);
\foreach \x in {1,...,\puntosa}{\pgfmathsetmacro{\u}{1/2^(2*\x-1)}
											     \pgfmathsetmacro{\w}{1/2^(2*\x-2)}
												  \draw (\u,1)--(\w,1);
											   }
\foreach \x in {1,...,\puntosc}{\pgfmathsetmacro{\u}{1/2^\x}										   
												  \draw (\u,1)--(\u,0.5);
											   }
\ifthenelse{#1 > 3}{
\foreach \x in {1,...,\puntosb} {\pgfmathsetmacro{\u}{1/2^(2*\x)}
													\pgfmathsetmacro{\w}{1/2^(2*\x-1)}
													\draw (\u,0.5)--(\w,0.5);
										        }	
							    }{}

\draw (\steptwo,0.5)--(\steptwo,1);	
\draw (\stepone,0.5)--(\stepone,1);
\foreach \x in {0.5,\B,...,1}{\draw (\stepone,\x)--(\steptwo,\x);}	
\draw (0,0.5)--(\stepone,0.5);
\draw (0,1)--(\stepone,1);		
\draw (\step,\C)--(\step,\D);
\ifthenelse{#1 > 3}{
\foreach \m in {\C,\E,...,\F}  {
													\pgfmathsetmacro{\mstepone}{\m+\stepone}
													\pgfmathsetmacro{\mstep}{\m+\step}
													
													\draw [fill=gray, fill opacity=0.85] (\step,\m)--(0,\mstep)--
													(\step,\mstepone)--(\stepone,\mstep)--(\step,\m);																					
									          }	
							   }{
							     \pgfmathsetmacro{\mstepone}{\C+\stepone}
								 \pgfmathsetmacro{\mstep}{\C+\step}
													
									\draw [fill=gray, fill opacity=0.85] (\step,\C)--(0,\mstep)--
													(\step,\mstepone)--(\stepone,\mstep)--(\step,\C);			
								}		
\draw (0,0.5)--(\step,\C)--(\stepone,0.5);	
\draw (0,1)--(\step,\D)--(\stepone,1);
\foreach \x in {\B,\BB,...,\F}{\draw[thin, densely dotted] (0,\x)--(\stepone,\x);}
	 							          	
\draw [fill=gray, fill opacity=0.5]
       (0,0.5) -- (\stepone,0.5) -- (\stepone,1) -- (0,1)--(0,0.5);					       											
\draw
\foreach \x in {0,\stepone,...,1} {(\x,0) node {} }

\foreach \x in {0,\stepone,...,\A} {(0,\x) node {} }
\foreach \x in {0,\stepone,...,\A} {(1,\x) node {} }

\foreach \l in {0,1,...,\kminustwo} {\foreach \m in {0.5,\B,...,1} {(1/2^\l,\m) node {} }}
\foreach \m in {0.5,\B,...,1} {(0,\m) node {} };

\foreach \x in {1,...,\puntosa} {
													\pgfmathsetmacro{\u}{1/2^(2*\x-1)}
													\pgfmathsetmacro{\v}{1/2^(2*\x-1)+\stepone}
													\pgfmathsetmacro{\w}{1/2^(2*\x-2)}
													\foreach \z in {\u,\v,...,\w} {
																								\draw node at (\z,1) {} ;
																								}
										          }
\ifthenelse{#1 > 3}{										          
\foreach \x in {1,...,\puntosb} {
													\pgfmathsetmacro{\u}{1/2^(2*\x)}
													\pgfmathsetmacro{\v}{1/2^(2*\x)+\stepone}
													\pgfmathsetmacro{\w}{1/2^(2*\x-1)}
													\foreach \z in {\u,\v,...,\w} {
																								\draw node at (\z,0.5) {} ;
																								}
										          }		
								}{}	          	
\foreach \m in {\C,\E,...,\D} {\draw node at (\step,\m) {} ;}				          
\end{tikzpicture}}
\newcommand{\warsawTdPol}[2]
{
\FPeval{\points}{4-((#1)/2)}
\begin{tikzpicture}[scale=#2,line join=bevel,x=5,y=5,z=3,rotate =0]
\tikzstyle{every node}=[circle, draw, fill=black!50,
                        inner sep=0pt, minimum width=\points pt]                        
\FPeval{\step}{1/2^((#1))} 
\FPeval{\twostep}{1/2^(2*((#1)))}
\FPeval{\twostepone}{1/2^(2*((#1))-1)}
\FPeval{\twosteptwo}{1/2^(2*((#1))-2)}
\FPeval{\stepone}{1/2^(((#1))-1)}
\FPeval{\steptwo}{1/2^(((#1))-2)}
\FPeval{\B}{(1/2)+(1/2^(((#1))-1))}
\FPeval{\BB}{(1/2)+(1/2^(((#1))-2))}
\FPeval{\A}{(1/2)-(1/2^(((#1))-1))}
\FPeval{\C}{(1/2)+(1/2^(((#1))))}
\FPeval{\D}{1-(1/2^((#1)))}
\FPeval{\H}{1-(1/2^((#1)))-\step}
\FPeval{\E}{1/2+(3/2^((#1)))}
\FPeval{\F}{1-(1/2^(((#1))-1))}
\FPeval{\kminustwo}{((#1))-1}
\pgfmathsetmacro{\puntosa}{int((((#1))-1)/2)}
\pgfmathsetmacro{\puntosb}{int((((#1))-2)/2)}
\pgfmathsetmacro{\puntosc}{int((((#1))+1)/2)}
\FPeval{\alt}{\stepone}
\draw (0,0,1)--(0,0,0)--(1,0,0)--(1,0,1);
\foreach \x in {1,...,\puntosa}{\pgfmathsetmacro{\u}{1/2^(2*\x-1)}
											     \pgfmathsetmacro{\w}{1/2^(2*\x-2)}
												  \draw (\u,0,1)--(\w,0,1);
											   }
\foreach \x in {1,...,\puntosc}{\pgfmathsetmacro{\u}{1/2^\x}										   
												  \draw (\u,0,1)--(\u,0,0.5);
											   }
\ifthenelse{#1 > 3}{
\foreach \x in {1,...,\puntosb} {\pgfmathsetmacro{\u}{1/2^(2*\x)}
													\pgfmathsetmacro{\w}{1/2^(2*\x-1)}
													\draw (\u,0,0.5)--(\w,0,0.5);
										        }	
								}{}		        
\draw (\steptwo,0,0.5)--(\steptwo,0,1);	
\draw (\stepone,0,0.5)--(\stepone,0,1);
\foreach \x in {0.5,\B,...,1}{\draw (\stepone,0,\x)--(\steptwo,0,\x);}	
\draw (0,0,0.5)--(\stepone,0,0.5);
\draw (0,0,1)--(\stepone,0,1);		
\draw
\foreach \x in {0,\stepone,...,1} {(\x,0,0) node {} }

\foreach \x in {0,\stepone,...,\A} {(0,0,\x) node {} }
\foreach \x in {0,\stepone,...,\A} {(1,0,\x) node {} }

\foreach \l in {0,1,...,\kminustwo} {\foreach \m in {0.5,\B,...,1} {(1/2^\l,0,\m) node {} }}
\foreach \m in {0.5,\B,...,1} {(0,0,\m) node {} };

\foreach \x in {1,...,\puntosa} {
													\pgfmathsetmacro{\u}{1/2^(2*\x-1)}
													\pgfmathsetmacro{\v}{1/2^(2*\x-1)+\stepone}
													\pgfmathsetmacro{\w}{1/2^(2*\x-2)}
													\foreach \z in {\u,\v,...,\w} {
																								\draw node at (\z,0,1) {} ;
																								}
										          }		
\ifthenelse{#1 > 3}{										          							          
 \foreach \x in {1,...,\puntosb} {
													\pgfmathsetmacro{\u}{1/2^(2*\x)}
													\pgfmathsetmacro{\v}{1/2^(2*\x)+\stepone}
													\pgfmathsetmacro{\w}{1/2^(2*\x-1)}
													\foreach \z in {\u,\v,...,\w} {
																								\draw node at (\z,0,0.5) {} ;
																								}
										          }		
							  }{}		   											          		
\ifthenelse{#1 > 3}{
	\foreach \m in {\C,\E,...,\H} {            								
											  \pgfmathsetmacro{\mpstepone}{\m+\stepone}
											  \pgfmathsetmacro{\mpstep}{\m+\step}
											  \pgfmathsetmacro{\mmstep}{\m-\step}
											  \coordinate (ul) at (0,0,\mpstep);	
											  \coordinate (ur) at (\stepone,0,\mpstep);	
											  \coordinate (dl) at (0,0,\mmstep);		
											  \coordinate (dr) at (\stepone,0,\mmstep);
											  \draw [fill=gray, fill opacity=1] (\step,\alt,\m)--(ul)--(ur)--cycle;						 
						    				  }	
						       }{\draw [fill=gray, fill opacity=1] (\step,\alt,\C)--(0,0,\B)--(\stepone,0,\B)--cycle;}
\draw node at (0,0,\B) {};						       
\ifthenelse{#1 > 3}{							    				  
\foreach \m in {\C,\E,...,\H} {            								
											  \pgfmathsetmacro{\mpstepone}{\m+\stepone}
											  \pgfmathsetmacro{\mpstep}{\m+\step}
											  \pgfmathsetmacro{\mmstep}{\m-\step}
											  \coordinate (ul) at (0,0,\mpstep);	
											  \coordinate (ur) at (\stepone,0,\mpstep);	
											  \coordinate (dl) at (0,0,\mmstep);		
											  \coordinate (dr) at (\stepone,0,\mmstep);
											  \draw [fill=gray, fill opacity=1] (\step,\alt,\m)--(ur)--(\step,\alt,\mpstepone)--cycle;
								            }	
								 }{\draw [fill=gray, fill opacity=1] (\step,\alt,\C)--(\stepone,0,\B)--(\step,\alt,\E)--cycle;}						            		
\foreach \m in {\C,\E,...,\D} {            								
											  \pgfmathsetmacro{\mpstep}{\m+\step}
											  \pgfmathsetmacro{\mmstep}{\m-\step}
											  \coordinate (ul) at (0,0,\mpstep);	
											  \coordinate (ur) at (\stepone,0,\mpstep);	
											  \coordinate (dl) at (0,0,\mmstep);		
											  \coordinate (dr) at (\stepone,0,\mmstep);							 
											  \draw [fill=gray, fill opacity=0.5] (\step,\alt,\m)--(ur)--(dr)--cycle;
										    }								 
\draw [fill=gray, fill opacity=0.5] (\step,\alt,\C)--(0,0,0.5)--(\stepone,0,0.5)--cycle;			
\foreach \m in {0.5,\B,...,1} {\draw node at (\stepone,0,\m) {} ;}
\foreach \m in {\C,\E,...,\D} {\draw node at (\step,\alt,\m) {} ;}
\draw node at (0,0,0.5) {};
\end{tikzpicture}
}
\theoremstyle{plain} 
\newtheorem{teo}{Theorem}
\newtheorem{lem}{Lemma}
\newtheorem{prop}{Proposition}
\newtheorem{cor}{Corollary}
\theoremstyle{definition}
\newtheorem{cons}{Construction}
\theoremstyle{remark}
\newtheorem{obs}{Remark}
\newtheorem{ej}{Example}
\newcommand{\comillas}[1]{``#1''}
\newcommand{\conjunto}[1]{\left\lbrace #1 \right\rbrace}
\newcommand{\mapeo}[5]{
\begin{eqnarray*}
#1:#2 & \longrightarrow & #3\\
#4 & \longmapsto & #5
\end{eqnarray*}}
\newcommand{\cms}{$(X,\textrm{d})$ }
\newcommand{\U}[1]{U_{2\varepsilon_{#1}}(A_{#1})}
\newcommand{\todon}{n\in\mathbb{N}}
\renewcommand{\epsilon}{\varepsilon}
\newcommand{\ball}[2]{\textrm{B}(#1,#2)}
\newcommand{\dist}[2]{\textrm{d}(#1,#2)}
\newcommand{\fas}{\{\varepsilon_n,A_n,\gamma_n,\}_{\todon}}
\newcommand{\card}{\textrm{card}}
\newcommand{\diam}{\textrm{diam}}
\newcommand{\subjclass}[2][2010]{%
  \let\@oldtitle\@title%
  \gdef\@title{\@oldtitle\footnotetext{#1 \emph{Mathematics Subject Classification.} #2}}%
}
\newcommand{\keywords}[1]{%
  \let\@@oldtitle\@title%
  \gdef\@title{\@@oldtitle\footnotetext{\emph{Key words and phrases.} #1.}}%
}
\title{Reconstruction of compacta by finite approximations and Inverse Persistence}
\author[1]{Diego Mondéjar Ruiz}
\author[2]{Manuel A. Morón}
\affil[1]{Departamento de Matemática Aplicada y Estadística\protect\\Universidad San Pablo CEU, Madrid, Spain\protect\\ \textsf{diego.mondejarruiz@ceu.es}}
\affil[2]{Departamento de Álgebra,Geometría y Topología\protect\\Universidad Complutense de Madrid and\protect\\Instituto de Matematica Interdisciplinar, Madrid, Spain\protect\\ \textsf{mamoron@mat.ucm.es}}
\date{}                     
\subjclass{54B20, 54C56, 54C60, 54D10, 55P55, 55Q07, 55U05.}
\keywords{Finite topological space, Alexandroff, shape theory, homotopy, persistence.}
\begin{document}
\maketitle
\begin{abstract}
The aim of this paper is to show how the homotopy type of compact metric spaces can be reconstructed by the inverse limit of an inverse sequence of finite approximations of the corresponding space. This recovering allows us to define inverse persistence as a new kind of persistence process.
\end{abstract}
\section{Introduction}
In this paper we deal with approximations of metric compacta. The approximation and reconstruction of topological spaces using simpler ones is an old theme in geometric topology. One would like to construct a very simple space as similar as possible to the original space. Since it is very difficult (or does not make sense) to obtain a homeomorphic copy, the goal will be to find an space preserving some (algebraic) topological properties such as compactness, connectedness, separation axioms, homotopy type, homotopy and homology groups, etc.
\paragraph{}
The first candidates to act as the simple spaces reproducing some properties of the original space are polyhedra. See the survey \cite{Mapproximating} for the main results. In the very beginnings of this idea, we must recall the studies of Alexandroff around 1920, relating the dimension of compact metric spaces with dimension of polyhedra by means of maps with controlled (in terms of distance) images or preimages. In a more modern framework, the idea of approximation can be carried out constructing a simplicial complex, based on our space, such as the Vietoris-Rips complex or the \v{C}ech complex, and compare its realization with it. In this direction, for example, we find the classical Nerve Lemma \cite{Bon,dRcomplexes} which claims that for a ``good enough" open cover of the space (meaning an open covering with contractible or empty members and intersections), the nerve of the cover has the homotopy type of our original space. The problem is to find those good covers (if they exist). For Riemannian manifolds, there are some results concerning its approximation by means of the Vietoris-Rips complex. Hausmann showed \cite{Hon} that the realization of the Vietoris-Rips complex of the manifold, for a small enough parameter choice, has the homotopy type of the manifold. In \cite{Lvietoris}, Latschev proved a conjecture made by Hausmann: The homotopy type of the manifold can be recovered using only a (dense enough) finite set of points of it, for the Vietoris-Rips complex. The results of Petersen \cite{Pa}, comparing the Gromov-Hausdorff distance of metric compacta with their homotopy types, are also interesting. Here, polyhedra are just used in the proofs, not in the results. 
\paragraph{}
Another important point, concerning this topic, are finite topological spaces. It could be expected that finite topological spaces are too simple to capture any topological property, but this is far from reality and comes from thinking about finite spaces as discrete ones. Another obstruction to the use of finite spaces is that a very basic observation reveals that they have very poor separation properties. Any finite topological space satisfying just the $T_1$ axiom of separation is really a discrete space. Since non-Hausdorff spaces seem to be less manageable, finite spaces could represent themselves a more difficult problem to study that the spaces we want to approximate with. There were two papers of Stong \cite{Sfinite} and McCord \cite{Msingular} that were a breakthrough in finite topological spaces. Stong studied the homeomorphism and homotopy type of finite spaces. Among other results, he showed that the homeomorphism types are in bijective correspondence with certain equivalence classes of matrices and that every finite space has a \emph{core}, which is homotopy equivalent to it. McCord defined a functor from finite $T_0$ spaces to polyhedra preserving the homotopy and homology groups (defining a weak homotopy equivalence between them). This is a very important result, since we obtain that the homotopy and homology groups of every compact polyhedron can be obtained as the groups of a finite space. The essential property of finite spaces, making possible this result, is that the arbitrary intersection of open sets is open (every space satisfying this property is called Alexandroff space) and hence they have minimal basis. If the finite space is $T_0$, the minimal basis gives the space a structure of a poset (first noticed in \cite{Adiskrete}) which is used in the cited result. Both papers were retrieved in a series of very instructive notes by May \cite{Mfinitetopological,Mfinitecomplexes}, where these results are adequately valued. Based on the theorems and relations proved in those papers, Barmak and Minian \cite{BMsimple,BMautomorphism,Balgebraic} introduced a whole algebraic topology theory over finite spaces.
\paragraph{}
One step further is to make use of the inverse limit construction. If we cannot obtain the desired approximation using only one simple space, we can try to obtain it as some kind of limit of an infinite process of refinement by good spaces. That idea is accomplished by the notion of inverse limit. It is similar in spirit to the use of the Taylor series to approximate a function. For the origins of using inverse limits to approximate compacta, we should go back, again, to the work of Alexandroff \cite{Auntur}, where it is shown that every compact metric space has an associate inverse sequence of finite $T_0$ spaces such that there is a subspace of the inverse limit homeomorphic to the original one. We also have to mention Freudenthal, who showed \cite{Freudenthal} that every compact metric space is the inverse limit of an inverse sequence of polyhedra. More recent results were obtained by Kopperman et al \cite{KTWthe,KWfinite}. They showed that every compact Hausdorff space is the \emph{Hausdorff reflection} of the inverse limit of an inverse sequence of finite $T_0$ spaces.  Also they define the concept of \emph{calming map} and show that if the maps in this sequence are calming, then an inverse sequence of polyhedra can be associated and its limit is homeomorphic to the original space. Those are good results, although the technical concepts of Hausdorff reflection and calming map, make its real computation hard to achieve. Another important result is the one obtained by Clader \cite{Cinverse}, who proved that every compact polyhedron has the homotopy type of the inverse limit of an inverse sequence of $T_0$ finite spaces.
\paragraph{}
Shape theory makes use of this notion of approximation by inverse limits. This theory was founded in 1968 with Borsuk's paper \cite{Bconcerning}. It is a theory developed to extend homotopy theory for spaces where it does not work well, because of its pathologies (for example, bad local properties). Although Borsuk's original approach does not make explicit use of inverse limits, they are in the underlying machinery. The idea of Borsuk was to enlarge the set of morphisms between metric compacta by embedding the spaces into the Hilbert cube and define some kind of morphisms between the open neighborhoods of those embedded spaces. Later, Mardesic and Segal initiated in \cite{MSshapes} the inverse system approach to Shape theory. Here, the approximative sense of Shape theory is clear: Every compact Hausdorff space can be written as the inverse limit of an inverse system (or an inverse sequence if the space is metric) of compact ANR's, which act as the good spaces. Then, the new morphisms are essentially defined as maps between the systems. Shape theory, in its invese system approach, is then defined and developed \cite{MSshape} for more general topological spaces and new concepts, as \emph{expansions} and \emph{resolutions}, have to take the role of the inverse limit for technical reasons, but the point of view is similar. It is evident that the inverse limit approximation point of view for spaces is closely related with Shape theory. There are several shape invariants. Among others, we have the \v{C}ech homology, which is the inverse limit of the singular homology groups and the induced maps in homology of the inverse system defining the shape of the space. 
\paragraph{}
In the last years, there has been a renewed interest in the approximation and reconstruction of topological spaces, in part because the development of the Computational Topology and more concretely the Topological Data Analysis (read the excellent survey of Carlsson \cite{Ctopology} as an introduction for this topic). Here, the idea is to recapture the topological properties of some space using partial or defective (sometimes called noisy) information about it. Usually we only know a finite set of points and the distances between them (this is known as \emph{point cloud}) which is a sample of an unknown topological space, and the goal is to reconstruct the topology of the space or, at least, be able to detect some topological properties. Besides the classical Vietoris-Rips and \v{C}ech complexes, several other complexes (as the \emph{witness}, \emph{Delaunay} complexes or the \emph{alpha shapes} \cite{EHcomputational}) are defined with this purpose. Some important results in this setting were obtained by Niyogi et al \cite{NSWa,NSWfinding}, where they give conditions to reconstruct the homotopy type and the homology of the manifold when only a finite set of points (possibly with noise) lying in a submanifold of some euclidean space, is known. They also use probability distributions in their results. There are a large amount of recent papers devoted to this kind of reconstructions. For instance, Attali et al \cite{Avietoris}, in a more computational approach, give conditions in which a Vietoris-Rips complex of a point cloud in an euclidean space recovers the homotopy type of the sampled space. Among other techniques, we have to highlight the \emph{persistent homology}. The idea here is as easy as effective: Instead of considering only one polyhedron based on the point cloud to recover the topology of the hidden space, consider a family of polyhedra constructed from the data and natural maps induced by the inclusion connecting them. Then, we do not choose one concrete resolution to analyze the point cloud, but we consider all possible values of the parameter and their connections at once and use them together to determine the evolution of the topology of the point cloud along the parameter changes. 
\paragraph{}
The first link between Shape theory and Persistent Homology was made in 1999 by Vanessa Robins \cite{Rtowards}. There, she proposes to use the machinery of shape theory to approximate compact metric spaces from finite data sets. She introduced the concept of \emph{persistent Betti number}, which is the evolution of the Betti numbers in the inverse sequence of polyhedra at different scales (or resolution) of approximation. Her approach is the following: Given a sample (finite set of points, possibly with noise) of an unknown topological space, construct an inverse system of $\varepsilon$-neighborhoods of the finite set and inclusion maps. Then, triangulate the $\varepsilon$-neighborhoods using the $\alpha$-shapes and we obtain an inverse system of polyhedra based on the sample. Then, track the Betti numbers over this system. For some examples arising from dynamical systems, she is able to give bounds for the behavior of the Betti numbers, when the resolution parameter tends to infinity, and hence the sample is more accurate. Her guess is that the more accurate the sample is, the more exactness in the prediction can be made, and is here where shape theory is proposed as a theory to support this and other similar methods.
\paragraph{}
In 2008, following this direction, Mor\'{o}n et al \cite{MCLepsilon} introduced what they called the \emph{main construction}\footnote{This was not the first paper of this research group in this topic. From another point of view, this theme is treated in \cite{GMRSfinite}.}. This is an inverse sequence of finite topological spaces constructed from more and more tight approximations of a given compact metric space. The finite spaces are not exactly the approximations but some subspaces of the hyperspace of the approximations with the upper semifite topology. This is necessary in order to define continuous maps between these approximations. These maps are defined in terms of proximity between points of consecutive approximations. Hence, they are not the inclusion (because the finite spaces are not necesarilly nested). At this point, they make use of the so called \emph{Alexandroff-McCord correspondence}, which is the functor assigning a polyhedron to every $T_0$ finite space, mentioned above. The functoriality is used to define maps between the induced polyhedra and hence we obtain an inverse sequence of polyhedra. The way that this sequence is constructed, using finite approximations, induces them to conjeture that the inverse limit of the inverse sequence of polyhedra is somehow related with the topology of the original compact metric space. This conjecture is stated as the \emph{general principle}, proposing this sequence to detect the shape properties of the space such as the \v{C}ech homology. Our work is placed here, understanding and expanding the properties of the main construction. In a forthcoming paper \cite{preprintShape} it is shown that this inverse sequence represents this space in terms of shape, namely that it is an HPol-expansion of the original space, and hence it recovers all the shape information about it.
\paragraph{}
We show here that the inverse limit of every inverse sequence of finite spaces defined by the main construction has the homotopy type of the original space, and it contains an homeomorphic copy as a (strong) deformation retract. We identify explicitly this subspace. After that, we study some properties of the main construction and the result of performing the main construction to some specific classes of spaces as dense subspaces, countable and ultrametric spaces. For the last, we obtain that in this case we can choose a suitable construction such that the inverse limit of the finite spaces is homeomorphic to the ultrametric space. We compare our results with that of Clader and Kopperman et al (previously cited). We can obtain Clader's result as a corollary of our main theorem. In the other case, their approximations are made for Hausdorff compact spaces, and we do not obtain their generality. In contrast, for the case of metric compacta, we obtain the same consecuences, and we also deduce that every compact metric space has the homotopy type of the inverse limit of an inverse sequence of finite $T_0$ spaces. The same result has been generalized very recently by Bilski \cite{Bilskiinverse} in a more general setting, namely for the class of locally compact, paracompact, Hausdorff spaces, using inverse systems of $T_0$ Alexandroff spaces instead of inverse sequences of finite $T_0$ spaces. The inverse system is defined in terms of the partial order defined in the set of all locally finite open coverings of the space and is related in some sense to the construction in \cite{KWfinite}. As a consecuence, the results of Clader \cite{Cinverse} are generalized using barycentric subdivisions of Alexandroff spaces. Also, we show that the Hausdorff reflection preserves the shape type and hence the results of Kopperman et al implies that every Hausdorff compact space has the same shape as an inverse sequence of finite $T_0$ spaces. The main construction allows us to outline an algorithm to obtain persistence modules as finite sequences extracted from an inverse sequence of polyhedra. These persistence modules are obtained in a different way from the usual ones so we call this new point of view \emph{inverse persistence}. We outline the basics of this process in section \ref{inversepersistence} and perform it in our paradigmatic example, the Warsaw circle. The constructibility of our process makes it suitable for computational purposes. All the results shown here are part of the first authors' thesis dissertation \cite{phMondejar}.
\paragraph{} The rest of this section is devoted to present the basic elements needed in the paper.
 
\subsection{Shape Theory}\label{shapetheory}
Shape theory is a suitable extension of homotopy theory for topological spaces with bad local properties, where this theory does not give any information about the space. The paradigmatic example is the Warsaw circle $\mathcal{W}$: It is the graph of the function 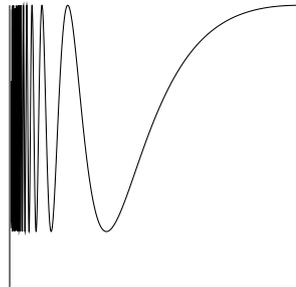
\begin{wrapfigure}{r}{0.5\textwidth}
\begin{center}
\begin{tikzpicture}[scale=1.5,domain=0:1,x=4cm]
\FPeval{\w}{2/(3.1415)}
\draw[domain=0.004:\w,samples=5000] plot (\x, {sin((1/\x)r)});
\draw (0,1)--(0,-1.5)--(\w,-1.5)--(\w,{sin((1/\w)r)});
\end{tikzpicture}
\end{center}
  \caption{The Warsaw circle.}
  \label{warsawcircle}
\end{wrapfigure} $\sin\left(\frac{1}{x}\right)$ in the interval $(0,\frac{2}{\pi}]$ adding its closure (that is, the segment joining $(0,-1)$ and $(0,1)$) and closing the space by any simple (not intersecting itself or the rest of the space) arc joining the points $(0,-1)$ and $(\frac{\pi}{2},1)$. See figure \ref{warsawcircle}. It is readily seen that the fundamental group of $\mathcal{W}$ is trivial. Moreover, so are all its homology and homotopy groups. But it is also easy to see that $\mathcal{W}$ has not the homotopy type of a point (for example, it decomposes the plane in two connected components), so it has some homotopy type information that the homotopy and homology groups are not able to capture. It is then evident that homotopy theory does not work well for $\mathcal{W}$. Shape theory was initiated by Karol Borsuk in 1968 to overcome these limitations, defining a new category, containing the same information about well behaved topological spaces, but giving some information about spaces with bad local properties. The idea is that, no matter how bad the space is, its neighborhoods when it is embedded into a larger space (for example the Hilbert cube $Q$) are not too bad. In our example, it is easy to see that the neighborhoods of $\mathcal{W}$ are annuli, having then the homotopy type of $\mathbb{S}^1$. The space $\mathcal{W}$ share some global properties with $\mathbb{S}^1$. There are no non-trivial maps from $\mathbb{S}^1$ to $\mathcal{W}$, so the method  will be to compare them in terms of maps between its neighborhoods. 

Specifically, Borsuk defined a new class of morphism between metric compacta embedded in the Hilbert cube, called fundamental sequences, as sequences of continuous maps $f_n:Q\rightarrow Q$ satisfying some homotopy conditions on the neighborhoods of the spaces embedded in the Hilbert cube.  He introduced a notion of homotopy among fundamental sequences, setting the shape category of metric compacta as the homotopy classes for this homotopy relation. It is shown that the new category differs only formally from the homotopy category when the space under consideration is an ANR. For the details, see the original source \cite{Bconcerning}, or the books \cite{Btheory,BtheoryA}. After Borsuk's description of the shape category for metric compacta, there was a lot of work in shape theory, such as different descriptions of shape, extensions to more general spaces (for instance, Fox's extension of shape for metric spaces \cite{Fon}), classifications of shape types or shape invariants. As general references, we recommend the books \cite{Btheory,BtheoryA,MSshape,DSshape} and the surveys \cite{Mthirty,Mabsolute}. The inverse system approach is the most widely used and it will be the one used here. It was initiated by Mardesic and Segal for compact Hausdorff spaces in \cite{MSshapes}, and it was developed by them and some other authors for more general situations. The best reference for this approach, is the book by the same authors \cite{MSshape} and all the proofs omitted here can be found there.

An \emph{inverse sequence} of topological spaces is a countable set of spaces $\{X_n\}_{n\in\mathbb{N}}$ and continuous maps $p_n:X_n\rightarrow X_{n-1}$ for $n\in\mathbb{N}$. We denote it by $\{X_n,p_{n,n+1}\}$ or
\[X_1\xleftarrow{p_{1,2}}X_2\xleftarrow{p_{2,3}}\ldots\xleftarrow{p_{n-1,n}} X_n\xleftarrow{p_{n,n+1}}X_{n+1}\xleftarrow{p_{n+1,n+2}}\ldots\]
The \emph{inverse limit} of an inverse sequence is the subset of the product space $\mathcal{X}\subset\prod_{n\in\mathbb{N}}X_n$ consisting of the points $(x_1,x_2,\ldots,x_n,x_{n+1},\ldots)$ satisfying $p_{n,n+1}(x_{n+1})=x_n$ for every $n\in\mathbb{N}$.

Given a compact metric space $X$, we consider an inverse sequence of compact ANRs (or polyhedra) $\{X_n,p_n\}$ with $X$ as inverse limit (it always exists). This inverse sequences up to a notion of equivalence are the new objetcs of our category. To define morphisms, we consider a map between inverse sequences and, again, a notion of equivalence\footnote{We do not include these technical definitions here for the sake of simplicity.}. Thus we have a new category where essentially we substitute compat metric spaces by their associated inverse sequences. This new category is able to detect some non trivial topological properties that homotopy is not. For example, the homology groups of the warsaw circle described above are trivial, but the equivalent groups in the shape category, named the \v{C}ech homology groups are not because they are detected in some way by the inverse sequence representing the warsaw circle. The idea is that we approximate spaces with a poor local behaviour with sequences of \comillas{nice} spaces. In our example, the inverse sequence is a sequence of circles an the identity map and its first \v{C}ech homology group is isomorphic to $\mathbb{Z}$ because it is the inverse limit of the induced inverse sequence of the first homology groups of the inverse sequence representing the warsaw circle. In the shape category, we have equivalent generalizations of homotopy and homology groups. It also have its own invariants such as the movability. This theory can be defined with more complex machinery for every topological space. Hence, we have an extension of the homotopy category, enlarging the set of morphisms. Not every shape morphism is represented by a continuous function, but we have that every continuous function induces a shape morphism. From \cite{Mshapes}, we have the following useful characterization for a function to induce an isomorphism in the shape category.
\begin{teo}\label{teo:funindiso}
Let $X$ and $Y$ be topological spaces and $f:X\rightarrow Y$ a continuous map. Then $f$ is a shape equivalence (that is, the shape morphism induced by $f$ is an isomorphism in the shape category) if and only if, for every CW-complex (or equivalently ANR or polyhedron) $P$, the function\footnote{Notation: For topological spaces $Z, R$, $[Z,R]$ is the set of homotopy classes of continuous functions from $Z$ to $R$. For a map $h:Z\rightarrow R$, we represent by $[h]$ its homotopy class.} \mapeo{f}{\left[ Y,P\right] }{\left[ X,P\right] }{\left[ h\right] }{\left[ h\cdot f\right] } is a bijection.
\end{teo}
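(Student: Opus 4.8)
The plan is to translate the statement into the pro-homotopy category and then invoke a Yoneda-type recognition principle for isomorphisms there. Fix polyhedral (HPol-) expansions $\mathbf{p}\colon X\to\mathbf{X}=(X_\lambda,p_{\lambda\lambda'},\Lambda)$ and $\mathbf{q}\colon Y\to\mathbf{Y}=(Y_\mu,q_{\mu\mu'},M)$, which exist for every topological space by the foundational results of Mardešić--Segal. The two defining conditions of an expansion say precisely that, for every polyhedron $P$, the map induced by precomposition with the $p_\lambda$,
\[
\varinjlim_{\lambda}[X_\lambda,P]\longrightarrow[X,P],
\]
is a bijection; equivalently $[X,P]\cong\operatorname{Hom}_{\mathrm{pro}\text{-}\mathcal{H}}(\mathbf{X},P)$ once $P$ is regarded as a rudimentary (single-term) pro-object, and similarly for $Y$, where $\mathcal{H}$ is the homotopy category of polyhedra. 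Under this identification the continuous map $f$ corresponds to a morphism $\mathbf{f}\colon\mathbf{X}\to\mathbf{Y}$ of $\mathrm{pro}\text{-}\mathcal{H}$, its shape class is an isomorphism exactly when $\mathbf{f}$ is an isomorphism of $\mathrm{pro}\text{-}\mathcal{H}$, and the precomposition map $f^{*}\colon[Y,P]\to[X,P]$ of the statement is exactly $\mathbf{f}^{*}\colon\operatorname{Hom}(\mathbf{Y},P)\to\operatorname{Hom}(\mathbf{X},P)$. So the theorem reduces to the categorical assertion that $\mathbf{f}$ is an isomorphism if and only if $\mathbf{f}^{*}$ is bijective for every $P\in\mathcal{H}$.

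The forward implication is immediate from functoriality: an isomorphism in any category induces bijections on all $\operatorname{Hom}$-sets, so if $\mathbf{f}$ is invertible then each $\mathbf{f}^{*}$ is a bijection, whence each $f^{*}$ is.

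For the converse I would build an inverse of $\mathbf{f}$ term by term. A morphism $\mathbf{Y}\to\mathbf{X}$ is by definition an element of $\varprojlim_{\lambda}\operatorname{Hom}_{\mathrm{pro}\text{-}\mathcal{H}}(\mathbf{Y},X_\lambda)$. Taking $P=X_\lambda$ in the hypothesis gives a bijection $\mathbf{f}^{*}\colon\operatorname{Hom}(\mathbf{Y},X_\lambda)\xrightarrow{\ \cong\ }\operatorname{Hom}(\mathbf{X},X_\lambda)$; let $g_\lambda$ be the preimage of the canonical projection $\pi_\lambda\colon\mathbf{X}\to X_\lambda$, so that $g_\lambda\circ\mathbf{f}=\pi_\lambda$. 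Naturality of $\mathbf{f}^{*}$ in the test object, together with the compatibility $[p_{\lambda\lambda'}]\circ\pi_{\lambda'}=\pi_\lambda$ of the canonical projections under the bonding morphisms, shows after applying the injective map $\mathbf{f}^{*}$ that $[p_{\lambda\lambda'}]\circ g_{\lambda'}=g_\lambda$; hence the family $(g_\lambda)$ is coherent and assembles into a morphism $\mathbf{g}\colon\mathbf{Y}\to\mathbf{X}$. One then verifies $\mathbf{g}\circ\mathbf{f}=\mathrm{id}_{\mathbf{X}}$ and $\mathbf{f}\circ\mathbf{g}=\mathrm{id}_{\mathbf{Y}}$: the first follows because projecting to each $X_\lambda$ yields $g_\lambda\circ\mathbf{f}=\pi_\lambda$ and the projections jointly separate morphisms into $\mathbf{X}$, while the second is checked by projecting to each $Y_\mu$, precomposing with $\mathbf{f}$, and distinguishing the two resulting morphisms into the single term $Y_\mu$ by appealing once more to the injectivity of $\mathbf{f}^{*}$.

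Finally, the three test classes in the statement are interchangeable: $[X,P]$ depends only on the homotopy type of $P$, every polyhedron is an ANR, and every ANR (equivalently every CW-complex) has the homotopy type of a polyhedron, so bijectivity over one class is equivalent to bijectivity over the others. I expect the main obstacle to be the converse direction, and specifically the bookkeeping hidden in the $\varprojlim$--$\varinjlim$ description of pro-morphisms: proving coherence of the family $(g_\lambda)$ and verifying the two composite identities require tracking representatives of homotopy classes across cofinal index changes, which is where essentially all the real work sits.
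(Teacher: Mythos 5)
The paper itself contains no proof of this statement: it is quoted as a known characterization from the reference cited immediately before it (Mardesic's paper on shapes for topological spaces), so there is no internal argument to compare yours against, and the comparison has to be made with the standard literature. Judged on its own merits, your proof is correct and is essentially the canonical argument in the Mardesic--Segal expansion framework. The reduction is legitimate: the two axioms of an HPol-expansion say exactly that $[X,P]\cong\operatorname{Hom}_{\textrm{pro-}\mathcal{H}}(\mathbf{X},P)=\varinjlim_\lambda[X_\lambda,P]$; the shape class of $f$ is an isomorphism iff the induced $\mathbf{f}$ is invertible in $\textrm{pro-}\mathcal{H}$; and the identification of $f^{*}$ with $\mathbf{f}^{*}$ is a routine check using the defining property of $\mathbf{f}$. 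Your converse is also sound, and the order of operations is the right one: coherence of $(g_\lambda)$ from injectivity of $\mathbf{f}^{*}$ at $P=X_\lambda$; then $\mathbf{g}\circ\mathbf{f}=\mathrm{id}_{\mathbf{X}}$ because $\operatorname{Hom}(\mathbf{W},\mathbf{X})=\varprojlim_\lambda\operatorname{Hom}(\mathbf{W},X_\lambda)$ means the projections $\pi_\lambda$ jointly separate morphisms into $\mathbf{X}$; and only then $\rho_\mu\circ\mathbf{f}\circ\mathbf{g}=\rho_\mu$, obtained by applying injectivity of $\mathbf{f}^{*}$ at $P=Y_\mu$ to $\rho_\mu\circ\mathbf{f}\circ(\mathbf{g}\circ\mathbf{f})=\rho_\mu\circ\mathbf{f}$. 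Two remarks are worth making. First, in the source the paper actually cites, a shape morphism $X\to Y$ is \emph{defined} as a family of functions $[Y,P]\to[X,P]$ natural in the ANR $P$; under that definition the theorem is nearly a tautology, since a natural transformation is invertible iff all its components are bijections. The genuine mathematical content, which your proof supplies, is the equivalence of that functorial description of shape with the inverse-system description used elsewhere in this paper; so your route is not wasted work, but you should be aware that which half of the statement is ``hard'' depends entirely on which definition of the shape category one starts from. Second, a small imprecision: ``every polyhedron is an ANR'' is false as literally stated for non-locally-finite complexes (their realizations with the weak topology are not metrizable); what you need, and what is true, is that each of the three test classes consists of spaces having the homotopy type of spaces in the other two, so bijectivity over one class is equivalent to bijectivity over the others.
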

		
There is another approach to shape that we shall use here. This is the multivalued theory of shape for metric compacta, initiated by Sanjurjo in \cite{San}. The key and acute idea of multivalued shape theory is to replace the shape morphisms by sequences of multivalued maps with decreasing diameters of their images, which is, in some sense, a very natural way of defining them, but hard to formalize. We do not give the details here, but the idea under this approach will be present in this paper. The equivalence of this definition of shape theory and the usual one is shown in \cite{San}. The importance of this theory lies on the fact that it is internal. That is, we do not make use of external elements (such as the Hilbert cube or polyhedra) to describe the morphisms, as in other shape theories. We just use maps between the metric compacta to define the morphisms. This multivalued theory of shape was reinterpreted later by Alonso-Morón and González Gómez in \cite{MGhomotopical} by the use of hyperspaces with the upper semifinite topology, which seems turns out to be a very adequate topology for our purposes. We define it in the following section.
\subsection{Hyperspaces with the upper semifinite topology}\label{hyperspaces}
The idea of hyperspaces is to define new spaces from old with a related topology. As a general reference for hyperspaces, we recommend the paper \cite{Mtopologies} and the book \cite{Nhyperspaces}.

Given a topological space $X$, we define the \emph{hyperspace} of $X$ as the set of its non-empty closed subsets,
\[2^X=\{C\subset X: C\textrm{ is closed }\}.\]
There are some distinguished elements of $2^X$: The subset $X$ is always a closed subspace of $X$, so it is a point $X\in 2^X$, sometimes named the \emph{fat} point. If $X$ is $T_1$, then every point is closed, so we can consider every singleton $\{x\}$, with $x\in X$, as a point $\{x\}\in 2^X$. The subset 
\[\left\lbrace\{x\}:x\in X\right\rbrace\subset 2^X,\]
is the \emph{canonical copy} of $X$ in $2^X$. It is the image of the inclusion map
\mapeo{\phi}{X}{2^X_H}{x}{\{x\}.} 

We can endow hyperspaces with a number of topologies. If $\cms$ is a compact metric space, the most common topology for the hyperspace is the one induced by the \emph{Hausdorff distance}, defined for two points $C,D\in 2^X$ as
\[d_H(C,D)=\inf\left\lbrace\varepsilon>0: C\subset D_{\varepsilon}, D\subset C_{\varepsilon}\right\rbrace,\]
where 
\[C_{\varepsilon}=\conjunto{x\in X:\dist{x}{C}<\varepsilon}\]
is the \emph{generalized ball} of radius $\varepsilon$. With this metric, $2^X_H=(2^X,\textrm{d}_H)$ is a compact metric space and the inclusion map $\phi$ is an isometry. That means, in particular, that the canonical copy $\phi(X)$ is homeomorphic to the original space $X$ or, in other words, $X$ is embedded in $2^X_H$. More results about hyperspaces with the Hausdorff metric and its relations with the base space can be seen in \cite{MGthe}.

We next define a more general topology for hyperspaces that will be used widely in this paper. It is a very easy-to-use topology but, on the other hand, the hyperspace has very poor topological properties (for example, in general it will be a non-Hausdorff topology). Let $X$ be a topological space. For every open set $U\subset X$, define $$B(U)=\conjunto{C\in 2^X: C\subset U}\subset 2^X.$$ The family $$B=\{B(U):U\subset X\enspace\textrm{open}\}$$ is a base for the \emph{upper semifinite topology} for the hyperspace $2^X$, which will be writen $2^X_u$. The closure operator of this topology is very easy to describe: Given a $T_1$ space $X$ and $C\in 2^X$, the closure of the set constisting of just this point is
$$\overline{\{C\}}=\conjunto{D\in 2^X:C\subset D}.$$
The general references for hyperspaces contain only the definition and some properties for this topology. We add two more references \cite{MGupper,MGhomotopical} about this topology and some of its properties, that will be used here. We list some of them here.
\begin{prop}Let $X,Y$ be Tychonoff spaces. We have the following.
\begin{itemize}[noitemsep]
\item[$i)$] The set $X$ is the unique closed point in $2^X_u$.
\item[$ii)$] The space $2^X_u$ is a compact connected space.
\item[$iii)$] $X$ is homeomorphic to $Y$ if and only if $2^X_u$ is homeomorphic to $2^Y_u$.
\item[$iv)$] If $X$ is non-degenerate\footnote{Actually, $X$ just need to be a non-degenerate $T_1$ space to satisfy this property.}, $2^X_u$ is a $T_0$ but not $T_1$ space.
\end{itemize}
\end{prop}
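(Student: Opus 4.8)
The plan is to handle the four items one at a time, using throughout that a Tychonoff space is in particular $T_1$, so that each singleton $\{x\}$ is closed in $X$ (hence a genuine point of $2^X$) and the closure formula $\overline{\{C\}}=\{D\in 2^X:C\subset D\}$ recalled above is available. For $i)$ I would read that formula directly: a point $C$ is closed exactly when $\overline{\{C\}}=\{C\}$, i.e.\ when $C$ has no proper closed superset; since $X$ itself is closed and contains every $C$, this forces $C=X$, and conversely $\overline{\{X\}}=\{X\}$, so $X$ is the unique closed point. For $ii)$ the key observation is that the only open set containing the fat point $X$ is all of $2^X$: a basic open set $B(U)$ contains $X$ iff $X\subset U$ iff $U=X$, whence $B(U)=2^X$, and any open set containing $X$ must contain such a basic one. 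Compactness is then immediate, since every open cover has a member containing $X$ and that member is already the whole space; connectedness follows too, because in any partition $2^X=A\sqcup B$ into nonempty open sets the piece containing $X$ must equal $2^X$, leaving the other empty.

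The substantive item is $iii)$. For the forward implication I would check that a homeomorphism $h\colon X\to Y$ induces $2^h\colon 2^X_u\to 2^Y_u$, $C\mapsto h(C)$, which is a bijection of closed sets satisfying $2^h(B(U))=B(h(U))$ and is therefore a homeomorphism. The reverse implication, recovering $X$ from $2^X_u$ alone, is where the real work lies and is the main obstacle. The idea is to pin down the canonical copy $\phi(X)=\{\{x\}:x\in X\}$ by a purely topological property, so that any homeomorphism is forced to respect it.

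Concretely, using the closure formula one sees that, among the elements of $2^X$, the singletons are exactly the minimal closed sets, and minimality is intrinsic to the topology: $C$ is a singleton if and only if $C\in\overline{\{D\}}$ implies $D=C$, a condition phrased solely through the closure operator of $2^X_u$ (indeed $C\in\overline{\{D\}}$ means $D\subset C$, and $T_1$-ness makes minimal nonempty closed sets precisely the singletons). Hence a homeomorphism $f\colon 2^X_u\to 2^Y_u$ carries this distinguished subset bijectively to its counterpart in $2^Y_u$. Since $B(U)\cap\phi(X)=\phi(U)$, the subspace topology that $2^X_u$ induces on $\phi(X)$ is exactly the image under $\phi$ of the topology of $X$, so $\phi$ is a homeomorphism onto $\phi(X)$; the same holds for $Y$. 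Therefore $f$ restricts to a homeomorphism $\phi(X)\to\phi(Y)$, yielding $X\cong\phi(X)\cong\phi(Y)\cong Y$.

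Finally, for $iv)$ I would verify $T_0$ by hand: if $C\neq D$ as closed sets, some point, say $x$, lies in one of them but not the other, and $U=X\setminus\{x\}$ is open because $X$ is $T_1$; then $B(U)$ contains exactly one of $C,D$. The failure of $T_1$ is then a consequence of $i)$ together with non-degeneracy: when $X$ has at least two points there is a singleton $\{x\}\neq X$, which by $i)$ is not a closed point of $2^X_u$, so not every point is closed. The only genuinely delicate step is the intrinsic recognition of the singletons in $iii)$; everything else reduces to unwinding the base $B(U)$ and the closure formula.
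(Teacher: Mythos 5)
Your proposal is correct in all four parts; note, however, that the paper itself states this proposition without proof, deferring to the references it cites for the upper semifinite topology, so there is no in-paper argument to compare against. Your argument is a sound self-contained substitute. Parts $i)$, $ii)$ and $iv)$ are exactly the standard unwindings: the closure formula $\overline{\{C\}}=\{D\in 2^X:C\subset D\}$ (valid because Tychonoff implies $T_1$) immediately gives that the fat point is the only closed point and that $T_1$ fails once $X$ has a second point; the observation that the only open set containing the fat point is $2^X$ itself gives compactness and connectedness in one stroke; and $B(X\setminus\{x\})$ separates any two distinct closed sets in the $T_0$ sense. The one genuinely substantive step, the reverse implication of $iii)$, is handled by the right idea: the singletons are characterized intrinsically as the points $C$ such that $C\in\overline{\{D\}}$ forces $D=C$ (minimality in the specialization order), so any homeomorphism $2^X_u\to 2^Y_u$ must carry the canonical copy $\phi(X)$ onto $\phi(Y)$; combined with the identity $B(U)\cap\phi(X)=\phi(U)$, which shows $\phi$ is an embedding (a fact the paper also records separately for $T_1$ spaces), this yields $X\cong\phi(X)\cong\phi(Y)\cong Y$. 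No gaps.
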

In this context, we also have that, if $X$ is a $T_1$ space the inclusion map
\mapeo{\phi}{X}{2^X_u}{x}{\{x\},} is a topological embedding.

In the case of metric compacta, we have some extra properties. Let $\cms$ be a compact metric space. Consider, for every $\varepsilon>0$, the subspace of $2^X$ consisting of all the closed subsets of $X$
$$U_{\varepsilon}=\conjunto{C\in 2^X:\diam(C)<\varepsilon}.$$ The following result is key in the use of the upper semifinite topology for hyperspaces in this text and its proof can be found in \cite{MGhomotopical}.
\begin{prop}
The family $U=\{U_{\varepsilon}\}_{\varepsilon>0}$ is a base of open neighborhoods of the canonical copy $\phi(X)$ inside $2^X_u$.
\end{prop}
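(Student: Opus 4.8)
The plan is to check the two conditions that make $U=\conjunto{U_\varepsilon}_{\varepsilon>0}$ a base of open neighborhoods of the set $\phi(X)$ in $2^X_u$: each $U_\varepsilon$ must be an open set containing $\phi(X)$, and every open $V$ with $\phi(X)\subset V$ must contain some $U_\varepsilon$. The containment $\phi(X)\subset U_\varepsilon$ is immediate, because every point of the canonical copy is a singleton $\{x\}$, and $\diam(\{x\})=0<\varepsilon$ for all $\varepsilon>0$.

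To prove that each $U_\varepsilon$ is open, I would show that every $C\in U_\varepsilon$ admits a basic neighborhood contained in $U_\varepsilon$. Writing $r=\diam(C)<\varepsilon$ and choosing $\delta>0$ so small that $r+2\delta<\varepsilon$, the generalized ball $W=C_\delta=\conjunto{x\in X:\dist{x}{C}<\delta}$ is open and contains $C$. A routine triangle-inequality estimate (for $x,y\in W$ pick $a,b\in C$ with $\dist{x}{a}<\delta$ and $\dist{y}{b}<\delta$) gives $\diam(W)\le r+2\delta<\varepsilon$, so every closed $D\subset W$ satisfies $\diam(D)\le\diam(W)<\varepsilon$; that is, $B(W)\subset U_\varepsilon$ while $C\in B(W)$. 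Hence $U_\varepsilon$ is a union of basic open sets of the upper semifinite topology, so it is open.

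The substantive step is the second condition, and it is where I expect the only real difficulty to lie. Given any open $V$ with $\phi(X)\subset V$, for each $x\in X$ the point $\{x\}$ lies in $V$, so by the definition of the upper semifinite topology there is an open $W_x\subset X$ with $x\in W_x$ and $B(W_x)\subset V$. The family $\conjunto{W_x}_{x\in X}$ is then an open cover of $X$. This is exactly the point at which compactness becomes indispensable: I would invoke the Lebesgue number lemma for the compact metric space $X$ to obtain a single $\lambda>0$ such that every nonempty subset of $X$ of diameter less than $\lambda$ is contained in some member $W_x$ of the cover.

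Setting $\varepsilon=\lambda$ then closes the argument: any $C\in U_\lambda$ has $\diam(C)<\lambda$, hence $C\subset W_x$ for some $x$, so $C\in B(W_x)\subset V$, which yields $U_\lambda\subset V$. The crux is that the Lebesgue number converts the pointwise choices $W_x$ into a uniform diameter threshold, and that threshold is precisely the quantity measured by the sets $U_\varepsilon$. Without compactness one could only attach to each closed set its own basic neighborhood, with no common $\varepsilon$ working simultaneously for every point of $\phi(X)$, so the compactness hypothesis is doing the essential work here.
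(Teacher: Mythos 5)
Your proposal is correct. Both halves are sound: the openness of each $U_{\varepsilon}$ via the generalized ball $C_{\delta}$ with $r+2\delta<\varepsilon$ is exactly right (every closed $D\subset C_{\delta}$ inherits the diameter bound, so $B(C_{\delta})\subset U_{\varepsilon}$), and the passage from the pointwise basic neighborhoods $B(W_x)\subset V$ to a uniform $\varepsilon$ via the Lebesgue number lemma is a valid and clean way to exploit compactness. Note that the paper itself does not prove this proposition; it defers to the cited reference \cite{MGhomotopical}, so there is no in-paper argument to compare against line by line. Your proof is a legitimate self-contained substitute, and it isolates correctly where compactness is indispensable: without a Lebesgue number, the open sets $W_x$ attached to each singleton need not admit a common diameter threshold, and indeed the statement fails for non-compact metric spaces. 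The only cosmetic remark is that you could phrase the Lebesgue-number step for nonempty subsets only, since elements of $2^X$ are nonempty closed sets, but this changes nothing of substance.
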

\begin{obs}
This result is also shown for the hyperspace $2^X_H$ with the Hausdorff metric in \cite{MGthe}.
\end{obs}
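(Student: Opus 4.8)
The statement to establish is that the sets $U_{\varepsilon}=\conjunto{C\in 2^X:\diam(C)<\varepsilon}$ form a base of open neighborhoods of the canonical copy $\phi(X)$, now inside the \emph{metric} hyperspace $2^X_H$. Two things must be checked: that each $U_{\varepsilon}$ is open in $2^X_H$ and contains $\phi(X)$, and that every open set $V\supseteq\phi(X)$ contains some $U_{\varepsilon}$. The containment $\phi(X)\subset U_{\varepsilon}$ is immediate, since every singleton satisfies $\diam(\{x\})=0<\varepsilon$. For openness, I would first prove that $\diam$ is $2$-Lipschitz with respect to $d_H$: if $d_H(C,D)<\delta$ then $C\subset D_{\delta}$, so for $x,y\in C$ one finds $x',y'\in D$ with $\dist{x}{x'}<\delta$ and $\dist{y}{y'}<\delta$, whence $\dist{x}{y}\le 2\delta+\diam(D)$; taking the supremum over $x,y\in C$ and then the symmetric estimate gives $|\diam(C)-\diam(D)|\le 2\,d_H(C,D)$. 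Since $X$ is compact, $\diam$ is a (continuous, bounded) real function on $2^X_H$, and hence $U_{\varepsilon}=\diam^{-1}\parentesis{[0,\varepsilon)}$ is open.

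The core of the argument is the neighborhood-base property, and the plan is to compare the family $\conjunto{U_{\varepsilon}}$ with the family $N_r=\conjunto{D\in 2^X:d_H(D,\phi(X))<r}$ of genuine Hausdorff-metric neighborhoods of $\phi(X)$, showing the two are mutually cofinal. The key elementary observation is that a closed set of small diameter is Hausdorff-close to any one of its points: if $\diam(C)<\varepsilon$ and $x\in C$, then $d_H(C,\{x\})=\sup_{y\in C}\dist{x}{y}\le\diam(C)<\varepsilon$, so $d_H(C,\phi(X))<\varepsilon$ and therefore $U_{\varepsilon}\subset N_{\varepsilon}$. Conversely, if $D\in N_r$ then $d_H(D,\{x\})<r$ for some $x\in X$, which forces $D\subset\ball{x}{r}$ and hence $\diam(D)<2r$, so $N_r\subset U_{2r}$. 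Thus the $U_{\varepsilon}$ and the $N_r$ generate one and the same neighborhood filter of $\phi(X)$.

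It then remains to invoke the standard fact that in a metric space the $r$-neighborhoods of a \emph{compact} set form a neighborhood base of that set. Since $\phi$ is an isometry and $X$ is compact, $\phi(X)$ is a compact subset of the metric space $(2^X,d_H)$; for any open $V\supseteq\phi(X)$ the distance in $(2^X,d_H)$ from the compact set $\phi(X)$ to the disjoint closed set $2^X\setminus V$ is a positive number $r$ (a continuous positive function attains a positive minimum on a compact set), so $N_r\subset V$ and consequently $U_r\subset N_r\subset V$. I expect the only mildly delicate point to be the two diameter-to-Hausdorff estimates; once the $2$-Lipschitz bound and the ``small diameter implies close to a singleton'' observation are in place, everything reduces to the compactness of $\phi(X)$ in $2^X_H$. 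As an alternative to the cofinality argument, one could argue by contradiction directly from the compactness of $2^X_H$: a sequence $C_n\notin V$ with $\diam(C_n)<1/n$ would subconverge to some $C\notin V$ with $\diam(C)=0$, that is, to a singleton belonging to $\phi(X)\subset V$, a contradiction.
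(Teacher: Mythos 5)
The paper offers no proof of this remark at all: it is a bibliographic pointer, delegating the Hausdorff-metric version of the preceding proposition to \cite{MGthe}. So there is no internal argument to compare yours against; what you have written is a self-contained replacement for the citation, and it is correct. Your decomposition --- (a) $\diam$ is $2$-Lipschitz for $d_H$, so each $U_{\varepsilon}=\diam^{-1}\left([0,\varepsilon)\right)$ is open; (b) the families $\conjunto{U_{\varepsilon}}$ and $\conjunto{N_r}$ are mutually cofinal via the two estimates $d_H(C,\{x\})=\sup_{y\in C}\dist{x}{y}\leqslant\diam(C)$ for $x\in C$ and $\diam(D)<2r$ when $d_H(D,\{x\})<r$; (c) metric $r$-neighborhoods of the compact set $\phi(X)$ form a neighborhood base --- uses only facts the paper itself records ($\phi$ is an isometry, $2^X_H$ is compact metric, $\textrm{d}_{\textrm{H}}(x,C)=\sup\conjunto{\dist{x}{c}:c\in C}$, continuity of $\diam$ on $2^X_H$, which the paper later quotes from \cite{Nhyperspaces}). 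Two small points deserve explicit mention. First, in $N_r\subset U_{2r}$ the strict inequality $\diam(D)<2r$ is not automatic from $d(u,v)<2r$ for all $u,v\in D$ (a supremum of quantities each below $2r$ can equal $2r$); it does hold here because $D$ is closed in the compact $X$, hence compact, so its diameter is attained --- alternatively, cofinality survives with the weaker $N_r\subset U_{2r+\eta}$, so nothing essential is at stake. Second, in step (c) the positivity of $r=\textrm{d}_{\textrm{H}}\textrm{-dist}\left(\phi(X),2^X\setminus V\right)$ requires the trivial case $V=2^X$ to be set aside, and uses that $2^X\setminus V$ is closed and disjoint from the compact $\phi(X)$. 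Your alternative compactness-and-subsequences argument at the end is also valid and arguably shorter, since it bypasses the cofinality bookkeeping entirely, at the cost of invoking compactness of the whole hyperspace $2^X_H$ rather than just of $\phi(X)$.
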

\begin{obs}
Note that if we consider any decreasing and tending to zero sequence of positive real numbers $\{\varepsilon_n\}_{\todon}$, we have that $\{U_{\varepsilon_n}\}_{\todon}$ is a nested countable base of $\phi(X)$ in $2^X_u$.
\end{obs}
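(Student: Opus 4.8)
The plan is to verify directly the three defining properties of a base of open neighborhoods of the subset $\phi(X)$ inside $2^X_u$: that each $U_\varepsilon$ is open, that each contains $\phi(X)$, and that every open set containing $\phi(X)$ contains some $U_\varepsilon$. The containment is immediate, since $\diam(\conjunto{x})=0<\varepsilon$ for every $x\in X$, so each singleton lies in $U_\varepsilon$ and hence $\phi(X)\subset U_\varepsilon$. The substance of the proof is in the remaining two properties, and the genuine obstacle is the third, where compactness of $X$ must be used.

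To see that $U_\varepsilon$ is open, I would show every point of it is interior by trapping a basic open set inside. Given $C\in U_\varepsilon$ with $\delta:=\diam(C)<\varepsilon$, consider the open thickening $U:=\conjunto{x\in X:\dist{x}{C}<\eta}$ with $\eta=(\varepsilon-\delta)/3>0$. A triangle-inequality estimate gives $\diam(U)\leq\delta+2\eta=(\delta+2\varepsilon)/3<\varepsilon$, so by monotonicity of the diameter every closed $D\subset U$ satisfies $\diam(D)\leq\diam(U)<\varepsilon$; that is, $B(U)\subset U_\varepsilon$. Since $C\subset U$ we have $C\in B(U)$, so $C$ is interior and $U_\varepsilon$ is open.

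For the absorbing property I would argue by contradiction. Suppose an open $V\supset\phi(X)$ contains no $U_\varepsilon$; then for each $n$ there is a closed $C_n$ with $\diam(C_n)<1/n$ but $C_n\notin V$. Choosing $x_n\in C_n$ and using compactness of $X$, pass to a subsequence with $x_n\to x$. As $\conjunto{x}\in\phi(X)\subset V$ and the sets $B(W)$ form a base of $2^X_u$, there is an open $W\ni x$ with $B(W)\subset V$; fix $r>0$ with $\bola{x}{r}\subset W$. The decisive estimate is that $C_n$ collapses onto $x$: for any $c\in C_n$, $\dist{c}{x}\leq\dist{c}{x_n}+\dist{x_n}{x}\leq\diam(C_n)+\dist{x_n}{x}\to 0$, so $\sup_{c\in C_n}\dist{c}{x}\to 0$. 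Hence for large $n$ we get $C_n\subset\bola{x}{r}\subset W$, i.e.\ $C_n\in B(W)\subset V$, contradicting $C_n\notin V$. Therefore some $U_\varepsilon\subset V$, which finishes the verification.
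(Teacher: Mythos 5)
Your argument is correct, but it takes a different route from the paper. In the paper this remark carries no proof of its own: it is stated as an immediate consequence of the preceding proposition (that the full family $\{U_{\varepsilon}\}_{\varepsilon>0}$ is a base of open neighborhoods of $\phi(X)$ in $2^X_u$), and that proposition is not proved in the paper either --- its proof is delegated to the reference \cite{MGhomotopical}. Granting the proposition, the remark needs only two one-line observations: $\varepsilon'\leq\varepsilon$ implies $U_{\varepsilon'}\subset U_{\varepsilon}$ (nestedness), and, given open $V\supset\phi(X)$, the proposition yields $U_{\varepsilon}\subset V$ for some $\varepsilon$, whence $U_{\varepsilon_n}\subset U_{\varepsilon}\subset V$ as soon as $\varepsilon_n<\varepsilon$. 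What you have done instead is to prove the cited proposition itself from scratch: openness of each $U_{\varepsilon}$ via the thickening $U=\conjunto{x\in X:\dist{x}{C}<\eta}$ with $\eta=(\varepsilon-\diam(C))/3$ and the basic set $B(U)$, and the absorption property via compactness, extracting a convergent subsequence from points $x_n\in C_n$ with $\diam(C_n)\to 0$ and trapping $C_n$ in a basic set $B(W)\subset V$; both steps are sound. What your route buys is self-containedness --- the only place compactness of $X$ genuinely enters is isolated cleanly in your contradiction argument --- while the paper's route buys brevity and modularity by outsourcing exactly this work. Two cosmetic points: you should say explicitly that the $U_{\varepsilon_n}$ are nested (monotonicity of $U_{\varepsilon}$ in $\varepsilon$ plus the decreasingness of $\varepsilon_n$), and your conclusion ``some $U_{\varepsilon}\subset V$'' should be followed by the one-line downgrade to ``some $U_{\varepsilon_n}\subset V$'' using $\varepsilon_n\to 0$, since the remark concerns the countable subfamily; alternatively, run your contradiction with $\varepsilon_n$ in place of $1/n$, which works verbatim.
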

Consider a continuous map between compact metric spaces $f:X\rightarrow Y$. We define the \emph{elevation induced} by $f$ as the function $2^f:2^X_u\rightarrow 2^Y_u$ defined in the natural way: For $C\in 2^X_u$, $2^f(C)=\bigcup_{c\in C}f(c)$. This is a continuous\footnote{In weaker topological assumptions for the spaces $X$ and $Y$, this is not always true.} map. Moreover, for every map from a topological space to a hyperspace (of the same space or a different one), we can consider an extension to the whole hyperspace. Let $X, Y$ be compact metric spaces. If $f:X\rightarrow 2^Y_u$ is a continuous map, its \emph{extension} is the function $F:2^X_u\rightarrow 2^Y_u$, given by $$F(C)=\bigcup_{x\in C}f(x).$$ It is an extension in the sense that we can consider that $f$ is actually a continuous map from the canonical copy of $X$ in $2^X_u$. That is, strictly speaking, $F$ would be the extension of the map $f^*:\phi(X)\rightarrow 2^Y_u$, with $f^*(\{x\})=f(x)$, which is continuous because $f$ is.
This is Lemma 3 in \cite{MGhomotopical}.
\begin{lem}[Continuity of the extension map]\label{teo:extension}
The extension of every continuous map $f:X\rightarrow 2^Y_u$ is well defined and continuous.
\end{lem}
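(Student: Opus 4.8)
The plan is to verify the two assertions separately: first that $F$ is well defined, meaning that for every $C\in 2^X$ the set $F(C)=\bigcup_{x\in C}f(x)$ is a nonempty \emph{closed} subset of $Y$ and hence a genuine point of $2^Y_u$, and second that $F$ is continuous. The extension property itself is immediate, since $F(\{x\})=f(x)$. The subtle point will be well-definedness: an arbitrary union of closed sets need not be closed, so I must exploit both the compactness of $X$ and the precise meaning of continuity of $f$ into the upper semifinite topology. Unwinding that continuity, since $\{B(U):U\subset Y\text{ open}\}$ is a base, $f^{-1}(B(U))=\{x\in X:f(x)\subset U\}$ is open for every open $U\subset Y$; equivalently, whenever $f(x)\subset U$ with $U$ open there is a neighbourhood $V$ of $x$ with $f(x')\subset U$ for all $x'\in V$. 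This is an upper-semicontinuity condition, and it is the engine of the whole argument.

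For well-definedness, fix $C\in 2^X$; as $X$ is a compact metric space, $C$ is compact. Nonemptiness of $F(C)$ is clear. To see that $F(C)$ is closed I would take $y\in\overline{F(C)}$ together with a sequence $y_n\to y$ with $y_n\in f(x_n)$ and $x_n\in C$; by compactness I may assume $x_n\to x\in C$. Suppose, for contradiction, that $y\notin f(x)$. Since $f(x)$ is closed in the metric space $Y$, the number $\rho=\dist{y}{f(x)}$ is strictly positive, and $U=\{z\in Y:\dist{z}{f(x)}<\rho/2\}$ is an open set containing $f(x)$ with $\dist{y}{z}>\rho/2$ for every $z\in U$. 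By the upper-semicontinuity above there is a neighbourhood $V$ of $x$ with $f(x')\subset U$ for all $x'\in V$; for $n$ large $x_n\in V$, so $y_n\in f(x_n)\subset U$ and $\dist{y}{y_n}>\rho/2$, contradicting $y_n\to y$. Hence $y\in f(x)\subset F(C)$, so $F(C)$ is closed.

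For continuity I would argue directly with the base $\{B(W):W\subset Y\text{ open}\}$ of $2^Y_u$. Let $C\in 2^X$ and let $B(W)$ be a basic neighbourhood of $F(C)$, i.e.\ $F(C)\subset W$. Then $f(x)\subset W$ for every $x\in C$, so by continuity of $f$ each $x\in C$ has an open neighbourhood $V_x$ with $f(x')\subset W$ for all $x'\in V_x$. Put $V=\bigcup_{x\in C}V_x$, an open subset of $X$ containing $C$; then $B(V)$ is a basic neighbourhood of $C$ in $2^X_u$, and for every $C'\in B(V)$ we have $C'\subset V$, whence $F(C')=\bigcup_{x'\in C'}f(x')\subset W$, i.e.\ $F(C')\in B(W)$. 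Thus $F(B(V))\subset B(W)$ and $F$ is continuous. I expect the closedness of $F(C)$ to be the only real obstacle; once the continuity of $f$ is read as upper semicontinuity, the continuity of $F$ is purely formal and, notably, does not even require compactness.
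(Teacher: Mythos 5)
Your proposal is correct, and in fact there is no in-paper argument to compare it with: the authors do not prove this lemma at all, but simply record that it is Lemma 3 of \cite{MGhomotopical}, so your proof supplies an argument the paper omits. Both halves are sound. Your reading of the hypothesis is the right one: continuity of $f:X\rightarrow 2^Y_u$ with respect to the base $\left\lbrace B(U)\right\rbrace$ is exactly upper semicontinuity, i.e.\ $f(x)\subset U$ with $U$ open implies $f(x')\subset U$ for all $x'$ in a neighbourhood of $x$. For well-definedness, your sequential argument is valid where it needs to be: $C$ is compact (a closed subset of the compact metric space $X$), so the points $x_n$ subconverge to some $x\in C$, and if $y\notin f(x)$ then closedness of $f(x)$ in the metric space $Y$ yields the positive distance $\rho$ and the open set $U$ that force the contradiction; this is precisely where compactness and metrizability are genuinely used (and it is the step that can fail under the weaker hypotheses alluded to in the paper's footnote on elevations, since a union of closed sets over a non-compact index set need not be closed). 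The continuity half is, as you observe, purely formal from the bases of the two upper semifinite topologies, via $F(B(V))\subset B(W)$ with $V=\bigcup_{x\in C}V_x$. As a minor remark, the closedness step could also be phrased through the standard fact that an upper semicontinuous map with compact values carries compact sets to sets with compact union, but your direct argument is just as short and self-contained.
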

\subsection{Finite spaces and the Alexandroff-McCord correspondence}
Alexandroff spaces are topological spaces satisfying a topological condition that makes them very special. This notion was introduced by Alexandroff in \cite{Adiskrete}. A topological space $X$ is said to be \emph{Alexandroff} provided arbitrary intersections of open sets are open. Obviously, the most important case of Alexandroff spaces are the \emph{finite} topological ones. Many of the hyperspaces considered in our results are finite. A good reference for Alexandroff and finite topological spaces are the notes of May \cite{Mfinitetopological,Mfinitecomplexes}. We also recommend two papers about Alexandroff and finite spaces \cite{Sfinite,Msingular} that were essential in its development. Finite topological spaces have captured a lot of attention in the last years because of the developments of digital and computational topology. In a series of papers, Barmak and Minian have shown very interesting theorems about the algebraic topology of finite topological spaces (for example, generalizating notions such as collapsibility and simple homotopy type to finite topological spaces). See, for instance, \cite{BMsimple,BMone-point,BMautomorphism} or Barmak's book \cite{Balgebraic}.  One could have the intuition that a topological space with a finite set of points cannot contain a deep geometric information, but this is shown not to be the case. Concerning Alexandroff spaces, is good to have in mind finite topological spaces, for simplicity. We can not require too strong separation properties to Alexandroff spaces, because they would turn trivial: An Alexandroff $T_1$ space is discrete. But, on the other hand, finite $T_0$ spaces have some geometric interest, since they have, at least, one closed point. Moreover, in terms of algebraic topology, we can consider only Alexandroff $T_0$ spaces because of the following theorem of McCord \cite{Msingular}.
\begin{teo}
Let $X$ be an Alexandroff space. There exists a quotient $T_0$ space $q_X:X\rightarrow X_0$ homotopically equivalent to $X$ ($q$ is a homotopy equivalence). Moreover, for every map between Alexandroff spaces, $f:X\rightarrow Y$ there is a unique map $f_0:X_0\rightarrow Y_0$, between $T_0$ Alexandroff spaces, such that $q_Yf=f_0q_X$.
\end{teo}
\paragraph{Alexandroff spaces and posets}
The most important property of an Alexandroff space $X$ is that it has a distinguished basis. For every $x\in X$, we can consider the intersection $$B_x=\bigcap_{x\in U\enspace\textrm{open}}U$$ of all the open sets containing $x$, which is open and it is called the \emph{minimal neighborhood} of $x$, because, by definition, it is contained in every open set containing $x$. It can be shown, that the set of minimal neighborhoods, $\conjunto{B_x:x\in X}$ is a base for the topology of $X$, called the \emph{minimal basis} of $X$. This minimal basis defines a reflexive and transitive relation on the space $X$. For $x,y\in X$, say $x\leqslant y$ if $B_x\subset B_y$. This relation is a partial order if and only if $X$ is $T_0$. On the other hand, every reflexive and transitive relation on a set $X$ determines an Alexandroff topology, with basis the sets $U_x=\{y\in X:y\leqslant x\}$. So, we have the following correspondence.
\begin{prop}
For every set, its Alexandroff topologies are in bijective correspondence with its reflexive and transitive relations. The topology is $T_0$ if and only if the relation is a partial order.
\end{prop}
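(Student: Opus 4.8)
The plan is to realize the claimed bijection by writing down two explicit maps --- one sending an Alexandroff topology to a preorder, the other sending a preorder to an Alexandroff topology --- and then verifying that the two composites are the identity. First I would treat the passage from a topology to a relation. Given an Alexandroff topology on $X$, the Alexandroff condition is exactly what guarantees that each minimal neighborhood $B_x=\bigcap_{x\in U\text{ open}}U$ is open, hence is the smallest open set containing $x$. Defining $x\leqslant y$ by $B_x\subseteq B_y$, I would record the convenient reformulation $x\leqslant y\iff x\in B_y$: if $x\in B_y$ then $B_x\subseteq B_y$ because $B_x$ is contained in every open set containing $x$, and conversely $x\in B_x\subseteq B_y$. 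Reflexivity is just $x\in B_x$, and transitivity is the nesting of the $B$'s, so the relation is a preorder.

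Next I would treat the passage from a relation to a topology. Given a reflexive, transitive relation, set $U_x=\{y\in X:y\leqslant x\}$. I would check the basis axioms directly: the $U_x$ cover $X$ by reflexivity, and if $z\in U_x\cap U_y$ then transitivity gives $z\in U_z\subseteq U_x\cap U_y$. The generated topology is Alexandroff because its open sets are precisely the \emph{down-sets} of $\leqslant$ (every union of sets $U_x$ is a down-set, and conversely any down-set $V$ equals $\bigcup_{y\in V}U_y$), and an arbitrary intersection of down-sets is again a down-set; along the way one sees that the minimal neighborhood of $x$ in this topology is exactly $U_x$.

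To see the two constructions are mutually inverse, the single load-bearing fact is the identity $U_x=B_x$, equivalently the lemma $y\in B_x\iff B_y\subseteq B_x$ (proved just as above). Then, starting from a topology, forming its preorder, and rebuilding the topology returns the same minimal basis $\{B_x\}$ and hence the same topology; and starting from a preorder, the attached topology has $B_x=U_x$, so the recovered relation $x\leqslant' y\iff x\in B_y=U_y$ coincides with the original $\leqslant$. For the second assertion I would show that $T_0$ is equivalent to antisymmetry of $\leqslant$. If $x\leqslant y$ and $y\leqslant x$ then $B_x=B_y$, so $x$ and $y$ belong to exactly the same open sets and $T_0$ forces $x=y$; conversely, if $\leqslant$ is antisymmetric and $x\neq y$, then (say) $x\notin B_y$, and $B_y$ is an open set containing $y$ but not $x$, witnessing $T_0$.

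The main obstacle, such as it is, is entirely concentrated in the lemma $y\in B_x\iff B_y\subseteq B_x$, which makes the two assignments inverse; the remaining verifications (basis axioms, reflexivity and transitivity, the down-set description of open sets) are routine. The one place demanding care is that the Alexandroff hypothesis must be invoked precisely where it is used --- to know that each $B_x$ is actually open --- since without it the minimal neighborhoods need not be open and the whole correspondence collapses.
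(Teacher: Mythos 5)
Your proof is correct and follows exactly the correspondence the paper sketches (the paper states this proposition without a formal proof, merely describing the two constructions $x\leqslant y\iff B_x\subseteq B_y$ and $U_x=\{y: y\leqslant x\}$ in the surrounding text). Your write-up supplies the verifications along those same lines --- the key lemma $y\in B_x\iff B_y\subseteq B_x$, the down-set description of the open sets, and the equivalence of $T_0$ with antisymmetry --- so it is essentially the paper's intended argument, carried out in full.
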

We call a set $X$ with a partial order $\leqslant$ a \emph{poset}. The last proposition tells us that Alexandroff $T_0$ spaces (sometimes called A-spaces) and posets are the same thing. In what follows we will use both points of view without distinction. With this notation, continuous maps are easily characterized. A function $f:X\rightarrow Y$ of Alexandroff spaces is continuous if and only if is order preserving, that is, $x\leqslant y$ implies $f(x)\leqslant f(y)$.
\paragraph{Alexandroff-McCord correspondence}
We recall the correspondence proved by McCord \cite{Msingular} (we call it the Alexandroff-McCord correspondence because it was Alexandroff who first worked on it) in which simplicial complexes are related with Alexandroff $T_0$ spaces. A \emph{simplicial complex} is a set of \emph{vertices} $V$ and a finite set of \emph{simplexes} $K\subset 2^V$ satisfying that any subset of a simplex $\tau\subset\sigma\in K$ is a simplex $\tau\in K$. A simplicial map $f:K\rightarrow L$ is a function between simplicial complexes $K$ and $L$ sending vertices to vertices (and hence simplexes to simplexes). A \emph{polyhedron} $X$ is a topological space obtained as the realization of a simplicial complex $K$ as a subset of an Euclidean space $X=|K|$ (see references for details). Every simplicial map $f:K\rightarrow L$ defines a continuous map between its realizations $|f|:|K|\rightarrow |L|$ turning combinatorics into topology. Given an A-space space $X$, define $\mathcal{K}(X)$ as the abstract simplicial complex having as vertex set $X$ and as simplices the finite totally ordered subsets $x_0\leqslant\ldots\leqslant x_s$ of the poset $X$. A continuous map $f:X\rightarrow Y$ of A-spaces defines a simplicial map $\mathcal{K}(f):\mathcal{K}(X)\rightarrow\mathcal{K}(Y)$, since it is order preserving. Now, we can define the following map $\psi=\psi_X:|\mathcal{K}(X)|\rightarrow X$ as follows. Every point $z\in|\mathcal{K}(X)|$ is contained in the interior of a unique simplex $\sigma$ spanned by a strictly increasing finite sequence $x_0\leqslant x_1\leqslant\ldots\leqslant x_s$ of points of $X$. We define $\psi(z)=x_0$, and the following theorem holds.
\begin{teo}[McCord \cite{Msingular}]\label{teo:AMcorrespondence}
The map $\psi_X$ is a weak homotopy equivalence. Moreover, given a map $f:X\rightarrow Y$ of A-spaces, the induced simplicial map $\mathcal{K}(f)$ makes the following diagram commutative.
$$\xymatrix@C=1.7cm{
X\ar[r]^{f}			 						  											& Y\\
|\mathcal{K}(X)|\ar[u]^{\psi_X}\ar[r]_{|\mathcal{K}(f)|}		    & |\mathcal{K}(Y)|\ar[u]_{\psi_Y}
}$$
\end{teo}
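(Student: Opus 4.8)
The plan is to handle the two assertions separately, starting with the commutativity of the square, which is elementary, and then proving that $\psi_X$ is a weak homotopy equivalence, which is the substantial part. For the square I would argue pointwise. Fix $z\in|\mathcal{K}(X)|$ lying in the interior of the simplex spanned by a strictly increasing chain $x_0\leqslant x_1\leqslant\ldots\leqslant x_s$, so that $\psi_X(z)=x_0$ and hence $f(\psi_X(z))=f(x_0)$. Since $f$ is order preserving, $|\mathcal{K}(f)|(z)$ lies in the simplex spanned by $f(x_0)\leqslant f(x_1)\leqslant\ldots\leqslant f(x_s)$, whose carrier is the chain of the \emph{distinct} values among the $f(x_i)$; its least element is $f(x_0)$. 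Therefore $\psi_Y(|\mathcal{K}(f)|(z))=f(x_0)=f(\psi_X(z))$, which is the required identity. The only delicate point is the collapsing of repeated vertices when $f$ is not injective, and this is harmless because $f(x_0)$ remains the minimum of the image chain.

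For the weak homotopy equivalence the engine is McCord's theorem on basis-like (``excellent'') open covers: if $p\colon E\to B$ is continuous and $\mathcal{U}$ is an open cover of $B$ that is a basis for a (possibly coarser) topology, such that each restriction $p\colon p^{-1}(U)\to U$ is a weak homotopy equivalence, then $p$ itself is one. I would apply this to $p=\psi_X$ with $\mathcal{U}=\{U_x\}_{x\in X}$ the minimal basis of the A-space $X$, where $U_x=\{y\in X:y\leqslant x\}$. First one checks continuity of $\psi_X$, which follows from the identity $\psi_X^{-1}(U_x)=\bigcup_{y\leqslant x}\operatorname{st}(y)$, a union of open stars of vertices and hence open; as $\psi_X^{-1}$ preserves intersections, these preimages again form a basis-like family, so the hypotheses of the cover theorem are in place. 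It then remains to verify that $\psi_X$ restricts to a weak equivalence over each $U_x$, and for this it suffices to show that both $U_x$ and $\psi_X^{-1}(U_x)$ are contractible, since any map between contractible spaces induces isomorphisms on all homotopy groups.

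The contractibility of $U_x$ is immediate: $U_x$ has greatest element $x$, so the identity of $U_x$ and the constant map at $x$ are order-comparable, and order-comparable maps of A-spaces are homotopic (the standard lemma). For $\psi_X^{-1}(U_x)$ I would exhibit a deformation retraction onto the full subcomplex $|\mathcal{K}(U_x)|$ spanned by $\{y:y\leqslant x\}$. Inside a simplex $\sigma$ spanned by a chain $z_0\leqslant\ldots\leqslant z_k$ that meets $\psi_X^{-1}(U_x)$ (equivalently $z_0\leqslant x$), the vertices with $z_i\leqslant x$ form an \emph{initial} segment $z_0,\ldots,z_j$, because the chain is increasing; hence $\sigma\cap\psi_X^{-1}(U_x)$ is $\sigma$ with the opposite face $\{z_{j+1},\ldots,z_k\}$ deleted, which retracts onto the face $\{z_0,\ldots,z_j\}=\sigma\cap|\mathcal{K}(U_x)|$ by pushing away from that opposite face. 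These simplexwise retractions are mutually compatible and assemble into a deformation retraction of $\psi_X^{-1}(U_x)$ onto $|\mathcal{K}(U_x)|$. Finally $|\mathcal{K}(U_x)|$ is contractible: since $x=\max U_x$ is comparable to every element of $U_x$, the complex $\mathcal{K}(U_x)$ is a simplicial cone with apex $x$, so its realization is star-shaped about the vertex $x$. This yields the contractibility of $\psi_X^{-1}(U_x)$ and closes the local verification, whence the cover theorem gives that $\psi_X$ is a weak homotopy equivalence.

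I expect the main obstacle to be the basis-like cover theorem itself, which carries the genuine topological content: reducing a global weak equivalence to local ones over the cover requires comparing the homotopy groups of $E$ and $B$ by representing maps out of spheres via simplicial approximation and subdividing finely enough that each piece lands in a single member of $\mathcal{U}$. Once this engine is granted, everything above is essentially combinatorial. A secondary technical point, harmless but worth stating, is that when $X$ is only Alexandroff the complex $|\mathcal{K}(X)|$ may be infinite; one must check that the simplexwise retractions fit together continuously in the weak (CW) topology, which is standard since each is defined by moving away from an opposite face and they agree on shared faces.
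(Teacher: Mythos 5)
The paper does not actually prove this statement: it is quoted, with attribution, from McCord's paper \cite{Msingular}, so there is no in-paper argument to compare yours against. What you have written is, in effect, a faithful reconstruction of McCord's own proof: the pointwise verification of strict commutativity (where your observation that $f(x_0)$ survives as the minimum of the image chain even when $f$ collapses vertices is exactly the right point), the reduction of the weak-equivalence claim to the basis-like open cover theorem applied to the minimal basis $\{U_x\}_{x\in X}$, the contractibility of $U_x$ via the comparable-maps lemma, and the simplexwise deformation retraction of $\psi_X^{-1}(U_x)$ onto the cone $|\mathcal{K}(U_x)|$ are precisely the steps of \cite{Msingular}. The details you supply check out, in particular the identity $\psi_X^{-1}(U_x)=\bigcup_{y\leqslant x}\operatorname{st}(y)$ giving continuity, and the initial-segment argument showing $\sigma\cap\psi_X^{-1}(U_x)$ is a simplex minus its opposite face. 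One small slip: the basis-like hypothesis in the cover theorem concerns the cover of the base $X$ (which the minimal basis satisfies trivially), not the family of preimages, so your remark that the preimages form a basis-like family is superfluous, though harmless. The single ingredient you leave unproved, the basis-like cover theorem, is indeed the genuine topological content of McCord's paper; since you flag it explicitly as a granted engine, this is a citation rather than a gap, and no more of one than the paper's own decision to cite the entire theorem.
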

\begin{ej}
Consider the finite space $X=\{a,b,c,d\}$ with proper open sets 
$$\tau=\conjunto{\{a\},\{c\},\{a,c\},\{a,b,c\},\{a,c,d\}}.$$ Its minimal basis is $$\conjunto{B_a=\{a\},B_b=\{a,b,c\},B_c=\{c\},B_d=\{a,c,d\}}.$$ Hence, $X$ is a poset with $a\leqslant b,d$, $c\leqslant b,d$. The corresponding simplicial complex $\mathcal{K}(X)$ has vertices $a,b,c,d$ and simplices 
$\langle a,b\rangle, \langle a,d\rangle, \langle c,b\rangle, \langle c,d\rangle$, whose realization is homeomorphic to a sphere $\mathbb{S}^1$. Hence $X$ has the homotopy and singular homology groups of $\mathbb{S}^1$.
\end{ej}
On the other direction, given a simplicial complex $K$, we can define an A-space $\mathcal{X}(K)$ whose points are the simplices of $K$ and the relation is given as $\sigma\leqslant\tau$ if and only if $\sigma\subset\tau$ as simplices. Also, from any simplicial map $g:K\rightarrow L$ it is evident that we obtain a continuous map $\mathcal{X}(g):\mathcal{K}\rightarrow\mathcal{L}$ of A-spaces. Now, since $\mathcal{X}(K)$ in an A-space, we can apply the previous theorem to obtain the simplicial complex $\mathcal{K}(\mathcal{X}(K))=K'$ and the weak homotopy equivalence $$\phi_K=\psi_{\mathcal{X}(K)}:|K|=|K'|=|\mathcal{K}(\mathcal{X}(K))|\longrightarrow\mathcal{X}(K).$$ Again, for every simplicial map $g:X\rightarrow Y$ we have that the following diagram commutes up to homotopy.
$$\xymatrix@C=2cm{
|K|\ar[r]^{g}\ar[d]_{\phi_K}			 		& |L|\ar[d]^{\phi_L}\\
\mathcal{X}(K)\ar[r]_{\mathcal{X}(g)}	& \mathcal{X}(L)
}$$
So, there is a mutual correspondence of simplicial complexes and A-spaces (or posets) preserving homotopy and singular homology groups. Note that this means that there are A-spaces with the same homotopy and singular homology groups as every possible simplicial complex. Concretely, there are finite $T_0$ spaces with the same homotopy and singular homology groups as any compact polyhedron.

Note that, given a simplicial complex $K$, we can apply the correspondence of theorem \ref{teo:AMcorrespondence} sequentially to obtain $\mathcal{K}(\mathcal{X}(\overset{n}{\cdots}\mathcal{K}(\mathcal{X}(K))))=K^{(n)}$ the $n$-th barycentric subdivision of $K$. Similarly, given any A-space $X$, we can apply the correspondence $n$ times to obtain what we will call the \emph{$n$-th barycentric subdivision} $X^{(n)}=\mathcal{K}(\mathcal{X}(\overset{n}{\cdots}\mathcal{K}(\mathcal{X}(K))))$ of the A-space $X$.

\subsection{Persistent homology}\label{persistence}
In the last years, the fields of Computational Topology and Applied Algebraic Topology have had a great and successful development. The deep and abstract mathematical concepts and theorems of (Algebraic) Topology have been shown as a very useful tool in real world problems, so the interest of other areas of science in them, is becoming bigger and bigger. As general references for these topics we give the books, \cite{Ztopology,EHcomputational,Gelementary}. We are interested in the more specific field of Topological Data Analysis. This consists of the study and management of (maybe belonging to real world) data sets using topological constructions and techniques. The excellent surveys \cite{Ctopology} by G. Carlsson and \cite{Gbarcodes} by R. Ghrist are strongly recommended for this topic.

In particular, we recall the powefull tool of persistent homology. Persistence is an algebraic topological technique used to detect topological features in contexts where we have not all the information about the space or the information we have is somehow noisy. We recommend, besides the general references quoted, the surveys \cite{EHpersistent,Vsketches}. It is usually agreed that the concept of persistence born in three different ways: Frosini and Ferri's group, studying the persistence of $0$-dimensional homology of functions (using the concept of size function) \cite{Fsize}, Vanessa Robins introducing the concept of persistent Betti numbers in a shape theory context to understand the evolution of homology in fractals \cite{Rtowards} and Edelsbrunner group \cite{ELZtopological}. In this paper, we deal with Robin's approach to persistence with a shape theory perspective, with the use of inverse sequences.
\paragraph{The idea of persistence}
We illustrate the notion of persistent homology through a very schematic example. Consider we have a finite set of points $\mathbb{X}$ (and we know the distances between them), possibly as a noisy sample of an unknown topological space $X$. If we want to detect some topological properties of $X$ from $\mathbb{X}$, one way could be to construct a simplicial complex based on this set of points and study its topologicalproperties. For example, in figure \ref{pointcloud},
\begin{figure}[h!]
\begin{center}
\includegraphics[scale=1]{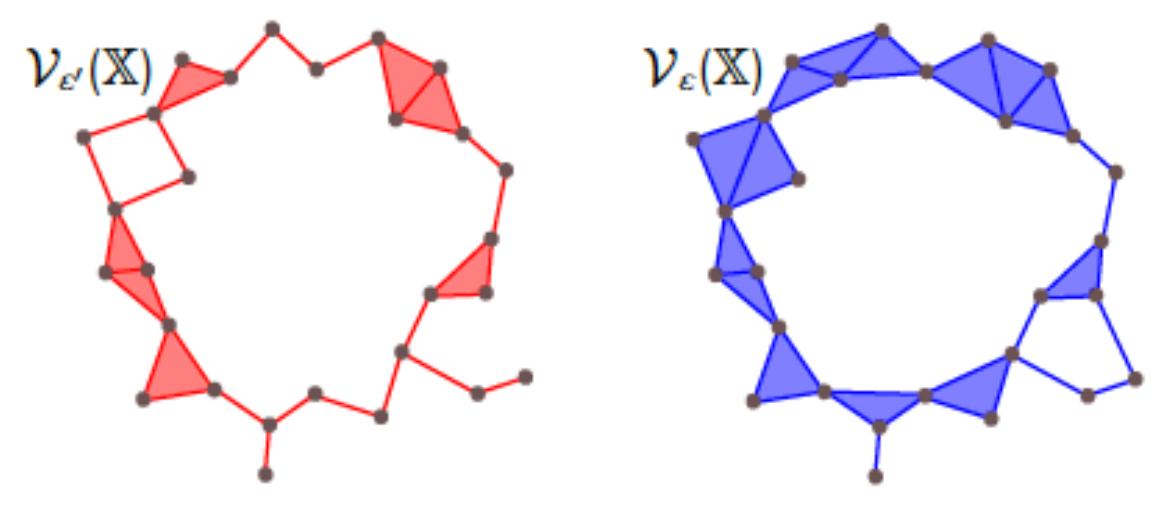}
\end{center}
  \caption{The Vietoris-Rips complexes of a point cloud with two parameters.}
  \label{pointcloud}
\end{figure}
we have the Vietoris-Rips complexes (the \emph{Vietoris-Rips complex} $\mathcal{V}_{\varepsilon}(X)$ of a finite set of points $X$ with parameter $\varepsilon$ in a metric $\textrm{d}$, is the simplicial complex with vertex set $X$ and $\sigma\subset X$ is a simplex of $\mathcal{V}_{\varepsilon}(X)$ iff $\diam(\sigma)<\varepsilon$) of a finite set of points $\mathbb{X}$, which is a noisy sample of an underlying space $X=\mathbb{S}^1$, with two different real parameters $0<\varepsilon'<\varepsilon$. Both detect the main feature of $X$, the central hole or $1$-cycle. But we have that none of them really determine the first homology group of the actual space $X$, because $$H_1(\mathcal{V}_{\varepsilon'}(\mathbb{X});\mathbb{Z})\cong H_1(\mathcal{V}_{\varepsilon}(\mathbb{X});\mathbb{Z})\cong\mathbb{Z}\oplus\mathbb{Z}\ncong\mathbb{Z}\cong H_1(X;\mathbb{Z}).$$ The persistent homology idea is just to consider the inclusion $\mathcal{V}_{\varepsilon'}(\mathbb{X})\hookrightarrow\mathcal{V}_{\varepsilon}(\mathbb{X})$ and the image of the induced map on the first homology groups, that is, 
$$\textrm{Im}\big(H_1(\mathcal{V}_{\varepsilon'}(\mathbb{X});\mathbb{Z})\hookrightarrow H_1(\mathcal{V}_{\varepsilon}(\mathbb{X});\mathbb{Z})\big) \cong\mathbb{Z}\cong H_1(X;\mathbb{Z})$$ which really captures only the desired feature, ignoring the noise of $\mathbb{X}$.
\paragraph{Filtrations}
In general, suppose we have a \emph{filtration}, i.e., a finite sequence of nested simplicial complexes
$$\emptyset=K_0\hookrightarrow K_1\hookrightarrow\ldots\hookrightarrow K_s.$$
We are interested in the topological evolution of the sequence of the homology groups, so, for every $p\in\mathbb{N}$ and every abelian group $G$, we can consider the induced $p$-th homology finite sequence
$$\{0\}=H_p(K_0;G)\hookrightarrow H_p(K_1;G)\hookrightarrow\ldots\hookrightarrow H_p(K_s;G).$$
As we move forward in the sequence, new homology classes can appear and some could merge or vanish. We collect the homology classes as follows. The \emph{$p$-th persistent homology groups} are the images of the homomorphisms induced by inclusion
$$H_p^{ij}=\textrm{Im}\big(H_p(K_i;G)\hookrightarrow H_p(K_j;G)\big)$$ for $0\leqslant i<j\leqslant s$. Similarly, the \emph{$p$-th persistent Betti numbers} are the ranks of these groups $\beta_p^{ij}=\textrm{rank} H_p^{ij}$. We can do the same definitions with reduced homology. The collection of persistent Betti numbers can be visualized in a \emph{persistence diagram}. Given a filtration of simplicial complexes, there are several algorithms determining these numbers and the evolution of the homology classes. See the quoted references for more details. 

There are several ways of arriving to a filtration of simplicial complexes. We mention the main two of them.
\begin{itemize}
\item A finite set of points and its distances. Given any finite metric space $\mathbb{X}$ (as in the previous example), called a \emph{point cloud}, we can produce filtrations of simplicial complexes taking the Vietoris-Rips, \v{C}ech or other complexes of $\mathbb{X}$ for every $\varepsilon>0$. There will be only a finite number of different complexes since $\mathbb{X}$ is finite, so we obtain a filtration of simplicial complexes along the parameter $\varepsilon$.
\item Consider a simplicial complex $K$ and a real valued function $f:K\rightarrow\mathbb{R}$ which is monotonic, meaning that if $\tau$ is a face of $\sigma$, then $f(\tau)\leqslant f(\sigma)$. Then, supposing the different values of the function are $-\infty=a_0<a_1<\ldots<a_s$, if we set $K_i=f^{-1}(-\infty,a_i]$ for $i=1,2,\ldots,s$, we have that $K_i$ are subcomplexes of $K$, $K_i$ is a subcomplex of $K_{i+1}$, for every $i=1,2,\ldots,s-1$, and $K_{a_s}=K$. Thus we have a filtration called the \emph{filtration of the function} $f$.
\end{itemize}
\paragraph{Structure of persistence}
One step further in the study of persistence is to find some structure in the evolution of the homology classes in a given filtration. In this direction, we recall the Structure Theorem by Carlsson and Zomorodian \cite{CZcomputing}. Let $F$ be a field. We define a \emph{persistence module} $\mathcal{M}$ as a family of vector spaces \footnote{The definition still holds if we replace $F$ by a commutative ring with unity, obtaining then $R$ modules $M_i$, but we need this stronger condition for the structure theorem.} $M_i$ over $F$ and homomorphisms $\varphi_i:M_i\rightarrow M_{i+1}$, for $i\in\mathbb{N}$. For example, the induced homology finite sequence of a filtration, where the maps $\varphi$ send a homology class to the one containing it. We will say that $\mathcal{M}$ is of finite type if $M_i$ is a finitely generated $R$-module and there exists an integer $m$ such that $\varphi_i$ is an isomorphism for $i\geqslant m$. Now we define the elements for the classification which, in some sense, represents the beginning and end of an homology class. A \emph{persistence interval} is an ordered pair $(i,j)$, with $0\leqslant i<j$, $i,j\in\mathbb{Z}\cup\{+\infty\}$. A finite set of persistence intervals is called a \emph{barcode}. The following correspondence is stablished.
\begin{teo}[Structure]
The isomorphism classes of persistence modules of finite type over a field are
in bijective correspondence with barcodes.
\end{teo}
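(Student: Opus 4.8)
The plan is to pass from persistence modules to graded modules over a polynomial ring, where a purely algebraic classification is available, and then to translate the resulting decomposition back into the combinatorial language of barcodes.

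First I would set up the algebraic correspondence. Given a persistence module $\mathcal{M}=\{M_i,\varphi_i\}$ over the field $F$, I form the graded module
$$\alpha(\mathcal{M})=\bigoplus_{i\in\mathbb{N}}M_i$$
over the graded polynomial ring $F[t]$ (with $\deg t=1$), where the indeterminate $t$ acts through the structure maps, namely $t\cdot m=\varphi_i(m)\in M_{i+1}$ for $m\in M_i$, so that multiplication by $t$ raises the grading by one. This assignment is functorial, and I would verify that it defines an equivalence between persistence modules and non-negatively graded $F[t]$-modules. Under this equivalence the finite type hypothesis --- each $M_i$ finitely generated together with $\varphi_i$ an isomorphism for all large $i$ --- corresponds exactly to $\alpha(\mathcal{M})$ being a \emph{finitely generated} graded $F[t]$-module.

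Next I would invoke the structure theorem. Since $F$ is a field, $F[t]$ is a principal ideal domain which is moreover graded, so the classification of finitely generated graded modules over a graded PID applies and yields a decomposition, unique up to reordering and isomorphism,
$$\alpha(\mathcal{M})\cong\left(\bigoplus_{a}\Sigma^{i_a}F[t]\right)\oplus\left(\bigoplus_{b}\Sigma^{i_b}\bigl(F[t]/(t^{n_b})\bigr)\right),$$
where $\Sigma^{d}$ denotes a shift of the grading by $d$. I then translate each summand into a persistence interval: a free summand $\Sigma^{i}F[t]$ records a class born at parameter $i$ that never vanishes and contributes the interval $(i,+\infty)$, while a torsion summand $\Sigma^{i}\bigl(F[t]/(t^{n})\bigr)$ records a class born at $i$ and dying at $i+n$, contributing $(i,i+n)$. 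Collecting these intervals produces a barcode, and uniqueness of the graded decomposition makes this barcode a well-defined invariant of the isomorphism class, giving injectivity. For surjectivity I would run the construction in reverse: any finite collection of intervals prescribes a direct sum of shifted free and cyclic torsion modules, whose associated persistence module realizes the prescribed barcode.

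The main obstacle I expect is the graded refinement of the PID structure theorem rather than its ungraded statement: one must guarantee that the cyclic summands can be chosen with \emph{homogeneous} generators, so that the shifts $\Sigma^{d}$ are well defined and match the birth parameters exactly. Carrying out this grading bookkeeping carefully, together with checking that the finite type condition is precisely what forces $\alpha(\mathcal{M})$ to be finitely generated, is where the real content of the argument lies.
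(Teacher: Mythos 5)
Your proposal is correct and is precisely the Carlsson--Zomorodian argument that the paper invokes: the paper itself omits the proof, saying only that it rests on the structure theorem for finitely generated (graded) modules over a \textsc{pid} and citing the algebra references, and your reduction to finitely generated graded $F[t]$-modules, the graded structure theorem, and the translation of free and torsion summands into infinite and finite intervals is exactly that omitted argument. You also correctly identify the two genuine checkpoints --- that finite type corresponds to finite generation of the graded module, and that the decomposition can be taken with homogeneous generators --- so there is nothing to add.
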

The proof of this theorem uses some advanced algebra, including the structure theorem of finitely generated modules and graded modules over \textsc{pid}s, which we do not include here for simplicity. For the algebraic machinery used in the proof, see \cite{DFabstract}. The importance of this result, which gives a structure to the persistence modules, is that it allows us to use the barcodes, a very intuitive way of representing the evolution of the homology classes, because they really determine the persistence module, up to isomorphism. So they are a good way to represent persistence. On the other hand, this result enables us to modify the standard reduction algorithm for homology using the properties of the persistence module to derive a rather simple algorithm to compute the barcodes. This is implemented in the Matlab routine Plex and in some functions of the library \textsc{phat} in \textbf{R}.
\begin{figure}
\begin{center}
\begin{tikzpicture}[node distance=1cm, auto,]
 \node[punkt] (pointcloud) {Point Cloud\\ \vspace{0.3cm}\includegraphics[scale=0.25]{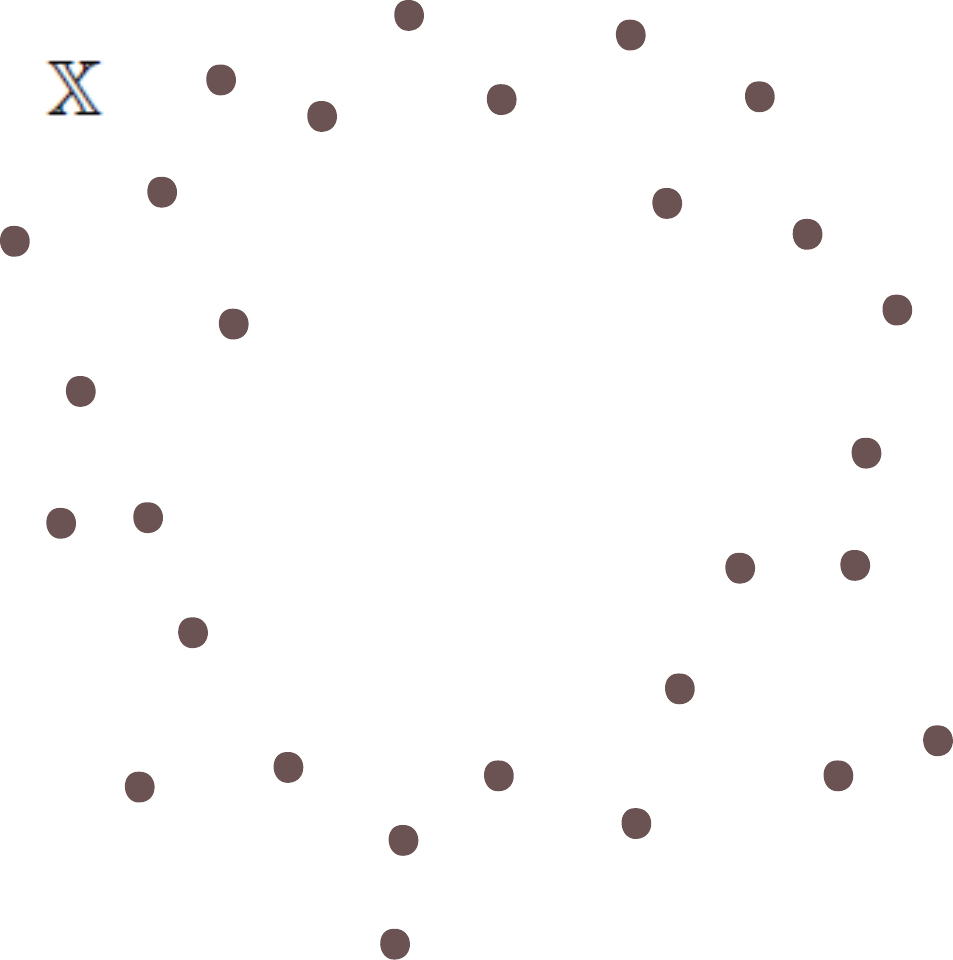}};
 \node[punkta, inner sep=5pt,right=1cm of pointcloud](filtration) {Filtration of complexes\\$K_1\subset\ldots\subset K_n$};
 \node[punktaa, inner sep=5pt,below=1cm of pointcloud](module) {Persistence Module\\$H_p(K_1)\rightarrow\ldots\rightarrow H_p(K_n)$};
 \node[punktaaa, inner sep=5pt,right=1cm of module](barcode) {Barcode\\ \vspace{0.3cm}\includegraphics[scale=0.40]{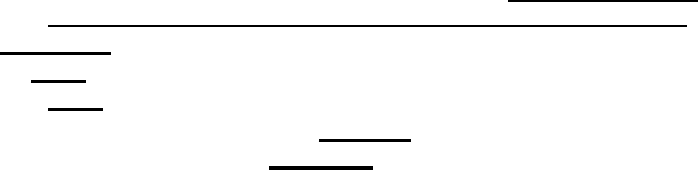}};
  \draw [pil,bend left=45](pointcloud) edge (filtration);
  \draw [pil,bend left=20](filtration) edge (module);
  \draw [pil,bend right=45](module) edge (barcode);
\end{tikzpicture}
\end{center}
\caption{The process of obtaining a barcode from a point cloud}
\end{figure}

\section{The Main Construction}\label{mainconstruction}
Let us recall the main construction introduced in section 6 of \cite{MCLepsilon}. There, given a compact metric space, it is obtained an inverse sequence of finite approximations of the space and some sequences of real numbers that allow us to define continuous maps between the approximations. 

Let $(X,\textrm{d})$ be a compact metric space and $\varepsilon>0$ a positive real number. A finite subset $A\subset X$ is said to be a \emph{finite $\varepsilon$-approximation} of $X$ if, for every $x\in X$, there is at least one point $a\in A$ such that $\dist{x}{a}<\varepsilon$. 

\begin{obs}Compact metric spaces have finite $\varepsilon$-approximations for every $\varepsilon>0$.
\end{obs}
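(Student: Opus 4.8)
The plan is to reduce the statement to the defining property of compactness by reading off the definition carefully. First I would observe that, by definition, a finite subset $A\subset X$ is a finite $\varepsilon$-approximation precisely when every $x\in X$ satisfies $\dist{x}{a}<\varepsilon$ for some $a\in A$; equivalently, when the open balls $\conjunto{\ball{a}{\varepsilon}:a\in A}$ cover $X$. So the task is nothing more than producing finitely many centers whose $\varepsilon$-balls cover the space.

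Next I would consider the family $\conjunto{\ball{x}{\varepsilon}:x\in X}$ of all open $\varepsilon$-balls. Since $\varepsilon>0$, each $\ball{x}{\varepsilon}$ is an open neighborhood of $x$, and every point of $X$ lies in its own ball, so this family is an open cover of $X$. Invoking compactness of $(X,\textrm{d})$, I would extract a finite subcover, say $\ball{a_1}{\varepsilon},\ldots,\ball{a_n}{\varepsilon}$, and set $A=\conjunto{a_1,\ldots,a_n}$. Then $A$ is finite, and for any $x\in X$ the fact that $x$ belongs to at least one $\ball{a_i}{\varepsilon}$ means $\dist{x}{a_i}<\varepsilon$, so $A$ meets the required condition.

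There is essentially no obstacle here: the statement is just the classical fact that a compact metric space is totally bounded, and the only point worth flagging is that $\varepsilon>0$ is needed so that the balls are genuine neighborhoods and the chosen family is an honest open cover. The whole argument is a single application of the finite subcover property.
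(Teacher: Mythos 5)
Your proof is correct and is exactly the standard argument: the paper states this as a remark without proof, and the intended justification is precisely the one you give (extract a finite subcover from the cover $\conjunto{\ball{x}{\varepsilon}:x\in X}$ and take the centers), the same technique the paper itself uses later for dense subsets and ultrametric spaces. Nothing to add.
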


Given a non-empty finite subset $A\subset X$ of a compact metric space $(X,\textrm{d})$, we consider, for each point $x\in X$, the \emph{set of closest points of $A$} as the subset of the hyperspace of $A$ consisting of the points minimizing the distance to $x$:
$$A(x)=\conjunto{a\in A:\dist{x}{a}=\dist{x}{A}}\subset 2^A_u\subset 2^X_u.$$
Note that $A$ is a discrete finite subset of $X$, hence the topology of $2^A_u$ as a subespace of $2^X_u$ is a finite space with the relation $\subset$.
It is natural, then, to define a function from the space to its closest sets. We will call the \emph{nearby map} from $X$ to $A$ to the function $$q_A:X\rightarrow 2^A_u,$$ defined by $q_A(x)=A(x)$. The extension of the nearby map will be usually written as $$r_A:2^X_u\rightarrow 2^X_u.$$
Both will be shown to be continuous maps because of the following lemma.
\begin{lem}\label{teo:alpha}
Let \cms be a compact metric space and $A\subset X$ a finite subset. For every $x\in X$ there exists $\delta>0$ such that, for every $y\in\textrm{B}(x,\delta)$, $A(y)\subset A(x)$.
\end{lem}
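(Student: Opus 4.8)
The plan is to exploit the finiteness of $A$, which forces a strict positive gap between the distance from $x$ to its closest points of $A$ and the distance from $x$ to the remaining points of $A$; the continuity of the distance function (through the triangle inequality) then propagates this gap to an entire ball around $x$, and the conclusion follows.

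First I would dispose of the trivial case $A(x)=A$, in which every point of $A$ realizes $\dist{x}{A}$; here $A(y)\subset A=A(x)$ holds automatically for any $y$, with no constraint on $\delta$. So assume $A\setminus A(x)\neq\emptyset$ and set $m=\dist{x}{A}$ together with $m'=\min\conjunto{\dist{x}{a}:a\in A\setminus A(x)}$. Because $A$ is finite, both minima are attained, and by the very definition of $A(x)$ we have $m'>m$ strictly; I then put $\eta=m'-m>0$ and choose $\delta=\eta/2$.

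The core estimate is the Lipschitz-type bound $|\dist{y}{a}-\dist{x}{a}|\leqslant\dist{x}{y}<\delta$, valid for every $a\in A$ and every $y\in\ball{x}{\delta}$, which is just the triangle inequality. Feeding this in, a closest point $a^{*}\in A(x)$ satisfies $\dist{y}{a^{*}}<m+\delta$, so $\dist{y}{A}<m+\delta=m+\eta/2$; on the other hand, any $a_{0}\in A\setminus A(x)$ satisfies $\dist{y}{a_{0}}>m'-\delta=m+\eta/2$. Comparing the two, $\dist{y}{a_{0}}>m+\eta/2>\dist{y}{A}$, so no point of $A\setminus A(x)$ can realize $\dist{y}{A}$. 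Every minimizer for $y$ therefore lies in $A(x)$, which is exactly $A(y)\subset A(x)$.

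The only point that really requires care is the strictness of the gap $\eta>0$, and this is precisely where the finiteness of $A$ is indispensable: it guarantees that the infimum over the non-closest points is attained and is strictly larger than $m$, so that a single $\delta$ works uniformly over all of $A\setminus A(x)$. For an infinite $A$ this infimum could coincide with $m$ and the argument would collapse. Once the gap is secured, everything else is a routine manipulation of the triangle inequality and strict inequalities, and I anticipate no further obstacle.
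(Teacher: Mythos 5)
Your proof is correct and follows essentially the same route as the paper: your $m$, $m'$, and $\delta=\eta/2$ coincide exactly with the paper's $\delta^-$, $\delta^+$, and $\delta=(\delta^+-\delta^-)/2$, and both arguments hinge on the same two triangle-inequality estimates separating $\dist{y}{a}$ for $a\in A(x)$ from $\dist{y}{b}$ for $b\in A\setminus A(x)$ at the threshold $(m+m')/2$. Your explicit remark that finiteness of $A$ is what makes the gap $\eta$ strictly positive is a nice clarification that the paper leaves implicit.
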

\begin{proof}
Let $x\in X$ and consider the distances $\delta^-=\dist{x}{A}\geqslant 0$ and $\delta^+=\dist{x}{A\setminus A(x)}>0$ (if $A\setminus A(x)=\emptyset$, then $A(x)=A$ and the result is obvious, so we will assume that it is not empty). Now, fix $$\delta=\frac{\delta^+-\delta^-}{2}>0.$$  If $a\in A(x)$ and $b\in A\setminus A(x)$, we see that, for every $y\in\textrm{B}(x,\delta)$,
\begin{align*}
\dist{y}{a}&\leqslant\dist{y}{x}+\dist{x}{a}<\delta+\delta^-=\frac{\delta^++\delta^-}{2},\\
\delta^+\leqslant\dist{x}{b}&\leqslant\dist{x}{y}+\dist{y}{b}<\delta+\dist{y}{b}.
\end{align*}
Whence $$\dist{y}{b}>\frac{\delta^++\delta^-}{2}>\dist{y}{a},$$ so $A(y)\subset A(x)\qedhere$
\end{proof}
As an immediate corollary, we obtain the continuity of the nearby map and its extension.
\begin{cor}\label{teo:nearbydistance}
Let \cms be a compact metric space and $A\subset X$ a finite subset. The nearby map $q_A:X\rightarrow 2^X_u$ is continuous. Hence its extension $r_A$ is also continuous.
\end{cor}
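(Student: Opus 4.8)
The plan is to verify continuity of $q_A$ directly against the basic open sets of the upper semifinite topology, and then to obtain the continuity of $r_A$ for free from the continuity-of-extension lemma. Recall that the sets $B(U)=\conjunto{C\in 2^X:C\subset U}$, for $U\subset X$ open, form a base of $2^X_u$, so to prove that $q_A:X\rightarrow 2^X_u$ is continuous it suffices to show that $q_A^{-1}(B(U))$ is open in $X$ for every open $U\subset X$.

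First I would unwind the preimage. Since $q_A(x)=A(x)$ and each $A(x)$ is a finite, hence closed, subset of $X$ (a legitimate point of $2^X$), we have
$$q_A^{-1}(B(U))=\conjunto{x\in X:A(x)\subset U}.$$
Next I would show this set is open using Lemma \ref{teo:alpha}. Fix $x$ with $A(x)\subset U$. The lemma furnishes $\delta>0$ such that $A(y)\subset A(x)$ for every $y\in\textrm{B}(x,\delta)$; hence $A(y)\subset A(x)\subset U$, which says exactly that $\textrm{B}(x,\delta)\subset q_A^{-1}(B(U))$. As the point $x$ was arbitrary, the preimage is open, so $q_A$ is continuous.

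Finally, for the extension $r_A$: by construction $r_A:2^X_u\rightarrow 2^X_u$ is the extension (in the sense defined before Lemma \ref{teo:extension}, with $Y=X$) of the map $q_A:X\rightarrow 2^X_u$ just shown to be continuous. Applying Lemma \ref{teo:extension} immediately yields that $r_A$ is well defined and continuous, completing the corollary.

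The only real content — though it is essentially immediate — is recognizing that Lemma \ref{teo:alpha} is precisely the statement needed: the local monotonicity $A(y)\subset A(x)$ on a small ball propagates membership in any $B(U)$ downward, and this is exactly the openness condition imposed by the upper semifinite topology. No metric estimate beyond the one already contained in Lemma \ref{teo:alpha} is required, so I do not anticipate any genuine obstacle here.
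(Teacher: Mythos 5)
Your proposal is correct and follows essentially the same route as the paper: both arguments rest entirely on Lemma \ref{teo:alpha} to get continuity of $q_A$ (the paper phrases it pointwise, noting $q_A(\textrm{B}(x,\delta))\subset q_A(x)$ so that the image of a small ball lies in every upper-semifinite neighborhood of $q_A(x)$, while you check openness of preimages of the basic sets $B(U)$ — the same fact in dual form), and both then invoke Lemma \ref{teo:extension} to conclude that $r_A$ is continuous. No gap; nothing further needed.
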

\begin{proof}
The map $q_A$ satisfies that, for every $x\in X$, there exists $\delta>0$ such that $$q_A(\textrm{B}(x,\delta))\subset q_A(x),$$ hence $q_A$ is continuous$\qedhere$
\end{proof}
\begin{obs}
If $A$ is a finite $\varepsilon$-approximation of a compact metric space $(X,\textrm{d})$, then the images of the points $x\in X$ by the nearby map are sent to the subspace $$U_{2\varepsilon}(A)=\{C\in 2^A:\diam{C}<2\varepsilon\}\subset 2^A_u,$$ because of the triangle inequality. That is, the nearby map may be written as $q_A:X\rightarrow U_{2\varepsilon}(A)$.
\end{obs}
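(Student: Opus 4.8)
The plan is to unwind the definitions and apply the triangle inequality exactly once, as the remark itself suggests. First I would record the only consequence of the approximation hypothesis that is needed: since $A$ is a finite $\varepsilon$-approximation, for every $x\in X$ there is some $a\in A$ with $\dist{x}{a}<\varepsilon$, and hence $\dist{x}{A}=\min_{a\in A}\dist{x}{a}<\varepsilon$ (the minimum is attained because $A$ is finite). This is the single place where the $\varepsilon$-approximation assumption enters.

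Next I would examine the set $A(x)=\conjunto{a\in A:\dist{x}{a}=\dist{x}{A}}$. By its very definition every element $a\in A(x)$ realizes the value $\dist{x}{A}$, so each such $a$ satisfies $\dist{x}{a}=\dist{x}{A}<\varepsilon$. Fixing any two points $a,a'\in A(x)$, the triangle inequality then yields
$$\dist{a}{a'}\leq\dist{a}{x}+\dist{x}{a'}<\varepsilon+\varepsilon=2\varepsilon.$$
From this pairwise bound I would pass to a bound on the diameter of $A(x)$.

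The one point that deserves a word of care is converting the strict pairwise inequality into a strict bound on the diameter. Because $A(x)\subset A$ is a finite set, its diameter is a maximum over finitely many pairs rather than a supremum, so it is attained by some pair $a_0,a_0'$; applying the displayed inequality to that pair gives $\diam(A(x))=\dist{a_0}{a_0'}<2\varepsilon$ directly, with no limiting argument. (Had $A$ been infinite one would in general only obtain $\diam(A(x))\leq 2\varepsilon$.) This shows $A(x)\in U_{2\varepsilon}(A)=\conjunto{C\in 2^A:\diam(C)<2\varepsilon}$ for every $x\in X$, and since $q_A(x)=A(x)$ by the definition of the nearby map, the entire image of $q_A$ lies in $U_{2\varepsilon}(A)$. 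Hence $q_A$ may be regarded as a map $q_A:X\rightarrow U_{2\varepsilon}(A)$, as claimed. There is essentially no obstacle here beyond keeping the finiteness of $A(x)$ in mind when invoking the strict inequality.
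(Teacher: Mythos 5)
Your proposal is correct and is exactly the argument the paper intends when it says ``because of the triangle inequality'': every $a\in A(x)$ satisfies $\dist{x}{a}=\dist{x}{A}<\varepsilon$ by the $\varepsilon$-approximation property, so any two points of $A(x)$ are within $2\varepsilon$ of each other, giving $\diam(A(x))<2\varepsilon$. Your additional observation that the strict inequality survives passage to the diameter because $A(x)$ is finite (the maximum is attained) is a sound and careful refinement of the paper's one-line justification, not a deviation from it.
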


Let \cms be a compact metric space. Given a real number $\varepsilon>0$, and a finite $\varepsilon$-approximation $A$, we say that $0<\varepsilon'<\varepsilon$ is \emph{adjusted} to $A$ whenever 
$$\varepsilon'<\frac{\varepsilon-\gamma}{2},$$ where $$\gamma=\sup\conjunto{\dist{x}{A} : x\in X}$$ being the supremum of the distances of points of $X$ to $A$ which obviously satisfies $\gamma<\varepsilon$.

The following result is the more important concerning the Main Construction. It says that, given any finite approximation of a compact metric space, we always can find a tighter finite approximation of the space and define nearby continuous maps between some finite spaces based on these approximations.
\begin{lem}\label{teo:adjustedapp}
Let \cms be a compact metric space and consider a real number $\varepsilon>0$ and a finite $\varepsilon$-approximation $A$ of $X$. For every $0<\varepsilon'<\varepsilon$ adjusted to $A$ and every finite $\varepsilon'$-approximation $A'$, the map $p:U_{2\varepsilon'}(A')\rightarrow U_{2\varepsilon}(A)$, defined by $p(C)=r_{A}(C)$, is well defined and continuous.
\end{lem}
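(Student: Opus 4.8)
The plan is to split the statement into its two assertions, treating continuity first (which is essentially inherited from the earlier results) and then well-definedness (which carries the real content). For continuity I would observe that $U_{2\varepsilon'}(A')$ sits inside $2^X_u$ as a subspace and that $p$ is nothing but the extension $r_A$ restricted to this subspace and corestricted to $U_{2\varepsilon}(A)$. Since the nearby map $q_A$ is continuous by Corollary \ref{teo:nearbydistance}, its extension $r_A:2^X_u\rightarrow 2^X_u$ is continuous by Lemma \ref{teo:extension}; and restricting the domain to a subspace and corestricting the codomain to any subspace containing the image both preserve continuity. So once well-definedness is established, continuity comes essentially for free.

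The heart of the proof is therefore to show that $p$ is well defined, i.e. that $r_A(C)\in U_{2\varepsilon}(A)$ for every $C\in U_{2\varepsilon'}(A')$. Unwinding the definition of the extension, $r_A(C)=\bigcup_{c\in C}A(c)$, which is a non-empty finite subset of $A$ (each $A(c)$ is a non-empty subset of $A$, and $C\subset A'$ is finite); as a finite subset of the discrete set $A$ it is closed, hence a genuine point of $2^A$. It then remains only to verify the diameter condition $\diam(r_A(C))<2\varepsilon$.

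For this I would take two arbitrary points $a_1,a_2\in r_A(C)$, say $a_1\in A(c_1)$ and $a_2\in A(c_2)$ with $c_1,c_2\in C$, and estimate by the triangle inequality
$$\dist{a_1}{a_2}\leqslant \dist{a_1}{c_1}+\dist{c_1}{c_2}+\dist{c_2}{a_2}.$$
Since $a_i$ is a closest point of $A$ to $c_i$, we have $\dist{a_i}{c_i}=\dist{c_i}{A}\leqslant\gamma$; and since $c_1,c_2\in C$ we have $\dist{c_1}{c_2}\leqslant\diam(C)<2\varepsilon'$. Combining these three bounds gives $\dist{a_1}{a_2}<2\gamma+2\varepsilon'$.

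The main (and really the only) obstacle is to push this last quantity down to the target bound $2\varepsilon$, and this is precisely the point at which the hypothesis that $\varepsilon'$ is \emph{adjusted} to $A$ is designed to enter. From $\varepsilon'<\frac{\varepsilon-\gamma}{2}$ we get $2\varepsilon'<\varepsilon-\gamma$, hence $2\gamma+2\varepsilon'<\gamma+\varepsilon$; and since $\gamma<\varepsilon$ this is in turn strictly less than $2\varepsilon$. Therefore $\dist{a_1}{a_2}<2\varepsilon$ for every such pair, and taking the maximum over the finitely many pairs of points of $r_A(C)$ yields $\diam(r_A(C))<2\varepsilon$. This confirms $r_A(C)\in U_{2\varepsilon}(A)$, establishing well-definedness and, with the first paragraph, completing the argument.
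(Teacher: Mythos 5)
Your proof is correct. Note that the paper states Lemma \ref{teo:adjustedapp} without its own proof (it is recalled from the main construction of \cite{MCLepsilon}), and your argument --- the triangle-inequality estimate $\textrm{d}(a_1,a_2)<2\gamma+2\varepsilon'<\gamma+\varepsilon<2\varepsilon$ for well-definedness, together with continuity of $p$ as a restriction and corestriction of the continuous extension $r_A$ (via Corollary \ref{teo:nearbydistance} and Lemma \ref{teo:extension}) --- is exactly the intended one, using precisely the ingredients the paper sets up.
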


By induction, we can repeat the process indefinitely, obtaining:
\begin{teo}[Main Construction]\label{teo:mainconstruction}
For every compact metric space $(X,\textrm{d})$, there exist a decreasing sequence of positive real numbers $\{\varepsilon_n\}_{\todon}$ tending to zero, a sequence $\{A_n\}_{\todon}$ of finite $\varepsilon_n$-approximations of $X$ with $\varepsilon_{n+1}$ adjusted to $A_n$ for every $\todon$ and continuous maps $p_{n,n+1}:U_{2\varepsilon_{n+1}}(A_{n+1})\rightarrow U_{2\varepsilon_n}(A_n)$ for every $\todon$.
\end{teo}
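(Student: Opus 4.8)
The plan is to obtain the whole sequence by a direct induction in which Lemma \ref{teo:adjustedapp} performs a single step of the construction, while the Remark guaranteeing the existence of finite $\varepsilon$-approximations supplies the objects at each stage. Thus there is nothing topologically new to prove: I would simply organize the choices so that at every level the hypotheses of Lemma \ref{teo:adjustedapp} are met, and then read off the continuous bonding maps from that lemma.

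For the base case I would fix any $\varepsilon_1>0$ and, by compactness, choose a finite $\varepsilon_1$-approximation $A_1$ of $X$. For the inductive step, suppose $\varepsilon_n$ and a finite $\varepsilon_n$-approximation $A_n$ have been constructed, and set $\gamma_n=\sup\conjunto{\dist{x}{A_n}:x\in X}$. Since $x\mapsto\dist{x}{A_n}$ is continuous on the compact space $X$ it attains its maximum, so $\gamma_n=\dist{x_0}{A_n}<\varepsilon_n$ for some $x_0\in X$; in particular the open interval $\left(0,\tfrac{\varepsilon_n-\gamma_n}{2}\right)$ is non-empty. I would then pick any $\varepsilon_{n+1}$ in this interval, so that $\varepsilon_{n+1}$ is adjusted to $A_n$, and choose a finite $\varepsilon_{n+1}$-approximation $A_{n+1}$, again by compactness. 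Applying Lemma \ref{teo:adjustedapp} to the data $\varepsilon_n$, $A_n$, $\varepsilon_{n+1}$, $A_{n+1}$ yields that $p_{n,n+1}\colon\U{n+1}\to\U{n}$, $p_{n,n+1}(C)=r_{A_n}(C)$, is well defined and continuous, which closes the induction.

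It then remains to check that the resulting sequence $\{\varepsilon_n\}_{\todon}$ is decreasing and tends to zero. Because $\gamma_n\geqslant 0$, the adjustment condition forces $\varepsilon_{n+1}<\tfrac{\varepsilon_n-\gamma_n}{2}\leqslant\tfrac{\varepsilon_n}{2}$, so the sequence is strictly decreasing and satisfies $\varepsilon_n<\varepsilon_1/2^{\,n-1}$, whence $\varepsilon_n\to 0$. This is the only arithmetic observation the theorem adds on top of the lemma.

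I do not expect a genuine obstacle at this stage, since all of the topological substance---that each $p_{n,n+1}$ is well defined (i.e.\ that $r_{A_n}(C)\in\U{n}$) and continuous---is precisely the content of Lemma \ref{teo:adjustedapp}, which itself rests on Lemma \ref{teo:alpha} and Corollary \ref{teo:nearbydistance}. The only two points deserving a moment's care, both handled above, are the non-emptiness of the interval from which $\varepsilon_{n+1}$ is drawn (guaranteed by the strict inequality $\gamma_n<\varepsilon_n$, which is exactly where the compactness of $X$ is used) and the decay $\varepsilon_n\to 0$.
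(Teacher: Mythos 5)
Your proof is correct and follows exactly the paper's route: the paper establishes Theorem \ref{teo:mainconstruction} with the single remark ``By induction, we can repeat the process indefinitely,'' relying on Lemma \ref{teo:adjustedapp} for each step, just as you do. Your added details---the strict inequality $\gamma_n<\varepsilon_n$ via compactness, the non-emptiness of the interval for $\varepsilon_{n+1}$, and the bound $\varepsilon_n<\varepsilon_1/2^{n-1}$ forcing $\varepsilon_n\to 0$ (which the paper records separately as a later lemma)---are all consistent with, and merely make explicit, the paper's argument.
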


Hence, we get an inverse sequence of finite spaces and continuous maps:
$$U_{2\varepsilon_1}(A_1)\xleftarrow{\enspace p_{1,2}\enspace} U_{2\varepsilon_2}(A_2)\xleftarrow{\enspace p_{2,3}\enspace}\ldots\xleftarrow{\enspace p_{n-1,n}\enspace} U_{2\varepsilon_n}(A_n)\xleftarrow{\enspace p_{n,n+1}\enspace}U_{2\varepsilon_{n+1}}(A_{n+1})\xleftarrow{\enspace p_{n+1,n+2}\enspace}\ldots$$

An inverse sequence $\{U_{2\varepsilon_n}(A_n),p_{n,n+1}\}$ with $\{\varepsilon_n\}_{\todon}$ converging to zero, satisfying that for every $\todon$, $A_n$ is a finite $\varepsilon_n$-approximation of $X$ with $\varepsilon_{n+1}$ adjusted to $A_n$ will be called a \emph{finite approximative sequence} (\textsc{fas}) of $X$.

\begin{obs}
Observe that, given a compact metric space, this process is completely constructive. We can compute all the real numbers and select finite approximations that satisfy the quoted properties. Being an inductive process, we compute the numbers and approximations in their strictly necessary order.
\end{obs}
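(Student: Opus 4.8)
The statement is a remark on the algorithmic character of the construction, so the plan is to trace the inductive recipe behind Theorem~\ref{teo:mainconstruction} and check that every quantity it outputs is produced from previously available data by finitely many elementary operations, arranged along a single dependency chain. No separate ``proof'' of a proposition is needed here; what must be exhibited is the effective procedure and the absence of any nonconstructive choice.

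For the base case I would fix $\varepsilon_1>0$ (say $\varepsilon_1=1$) and, using the existence of finite approximations for compact metric spaces, select a finite $\varepsilon_1$-approximation $A_1$; this selection is finite, since one adds points of $X$ one by one, testing after each addition whether the radius-$\varepsilon_1$ balls about the chosen points already cover $X$, a process that compactness forces to terminate. For the inductive step, assume $\varepsilon_n$ and a finite $\varepsilon_n$-approximation $A_n$ are in hand. I would determine the covering radius $\gamma_n=\sup\conjunto{\dist{x}{A_n}:x\in X}$, which is attained and lies strictly below $\varepsilon_n$ because $\textrm{d}(\cdot,A_n)$ is continuous on the compact space $X$ and $A_n$ is an $\varepsilon_n$-approximation. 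I then pick $\varepsilon_{n+1}$ subject to the two explicit inequalities $\varepsilon_{n+1}<(\varepsilon_n-\gamma_n)/2$ (adjustedness) and $\varepsilon_{n+1}<\varepsilon_n/2$ (to force $\{\varepsilon_n\}_{\todon}\to 0$), both decidable once $\gamma_n$ is known, and select a finite $\varepsilon_{n+1}$-approximation $A_{n+1}$ as in the base case. The connecting map $p_{n,n+1}(C)=r_{A_n}(C)$ is thereby fixed; evaluating it on $C\in U_{2\varepsilon_{n+1}}(A_{n+1})$ only requires, for each $c\in C$, comparing the finitely many distances $\dist{c}{a}$ with $a\in A_n$ to locate the nearest ones, a finite task.

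To justify the phrase ``strictly necessary order'' I would record the dependency structure as the chain
$$\varepsilon_1\longrightarrow A_1\longrightarrow\gamma_1\longrightarrow\varepsilon_2\longrightarrow A_2\longrightarrow\gamma_2\longrightarrow\cdots,$$
in which $p_{n,n+1}$ becomes computable as soon as $A_n$ and the domain $U_{2\varepsilon_{n+1}}(A_{n+1})$ are available. Since no object in this chain refers to a later term, no global or nonconstructive choice enters, and the whole finite approximative sequence is generated term by term.

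The main obstacle I anticipate is the exact evaluation of the supremum $\gamma_n$ over the possibly uncountable space $X$, which is not a finite operation. I would circumvent it by observing that adjustedness needs only an \emph{upper} bound for $\gamma_n$: if $\gamma_n\leqslant\gamma_n'<\varepsilon_n$, then $(\varepsilon_n-\gamma_n')/2\leqslant(\varepsilon_n-\gamma_n)/2$, so any $\varepsilon_{n+1}<(\varepsilon_n-\gamma_n')/2$ is automatically adjusted to $A_n$. Such a bound is obtained by a finite computation: evaluating $\textrm{d}(\cdot,A_n)$ on a sufficiently fine finite net $B$ of $X$ (itself a finite approximation of the kind already produced, with mesh $\delta$) yields $\gamma_n\leqslant\max_{b\in B}\dist{b}{A_n}+\delta$, and refining $B$ drives the right-hand side below $\varepsilon_n$, since $\max_{b\in B}\dist{b}{A_n}\leqslant\gamma_n<\varepsilon_n$. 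Thus the construction remains effective without ever resolving the supremum exactly, which is exactly the content of the remark.
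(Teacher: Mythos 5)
Your proposal is correct and follows essentially the route the paper intends: the remark carries no separate proof in the paper, and its content is precisely the dependency chain $\varepsilon_1\rightarrow A_1\rightarrow\gamma_1\rightarrow\varepsilon_2\rightarrow A_2\rightarrow\cdots$ that you make explicit from the inductive construction behind Theorem \ref{teo:mainconstruction}, together with the finiteness of each step (greedy selection of a finite $\varepsilon$-approximation terminating by compactness, the connecting map $p_{n,n+1}$ evaluated by finitely many distance comparisons).

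The one place you genuinely go beyond the paper is the treatment of $\gamma_n$. The paper takes the supremum $\gamma_n=\sup\conjunto{\dist{x}{A_n}:x\in X}$ as given, and in its worked examples (the unit interval in section \ref{ej:intervalotriadico} and the Warsaw circle in section \ref{sec:computationalwarsawcircle}) computes it exactly by ad hoc geometric arguments such as the midpoint-of-interval observation. You instead note that adjustedness is monotone in the bound used: any $\gamma_n'$ with $\gamma_n\leqslant\gamma_n'<\varepsilon_n$ serves, and such a bound is finitely computable from a $\delta$-net $B$ via $\gamma_n\leqslant\max_{b\in B}\dist{b}{A_n}+\delta$, with repeated halving of $\delta$ guaranteed to terminate since $\gamma_n+\delta<\varepsilon_n$ eventually holds. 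This is a sound and worthwhile sharpening, since exact evaluation of a supremum over an uncountable space is not a finite operation, and it is exactly the kind of point the remark's informal claim of constructivity glosses over. One minor redundancy: your extra requirement $\varepsilon_{n+1}<\varepsilon_n/2$ already follows from adjustedness, because $\gamma_n\geqslant 0$ gives $\varepsilon_{n+1}<(\varepsilon_n-\gamma_n)/2\leqslant\varepsilon_n/2$; this is how the paper itself derives $\varepsilon_n<\varepsilon_1/2^{n-1}$, so the decrease to zero needs no separate stipulation.
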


\begin{obs}
Strictly speaking, a \textsc{fas} will be the inverse sequence of finite spaces quoted above. But we will use \textsc{fas} to make reference also to the approximations and the numbers obtained, $\fas$, because they determine uniquely the finite spaces and maps of the inverse sequence.
\end{obs}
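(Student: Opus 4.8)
The statement is a bookkeeping claim: it asserts that no genuine choices remain once $\fas$ is fixed, so that the assignment sending the data to the inverse sequence $\{\U{n},p_{n,n+1}\}$ is single-valued. The plan is therefore to unwind the definitions of Section \ref{mainconstruction} and exhibit each term of the sequence as an explicit function of the listed data, verifying separately that the finite spaces are determined and that the bonding maps are determined. I would stress at the outset that although $\fas$ is written as a list of finite point-sets and real numbers, each $A_n$ is given as a subset of the fixed ambient space $(X,\textrm{d})$, so it comes packaged with all pairwise distances among its own points and with the cross-distances $\dist{x}{a}$ for $x\in A_{n+1}$, $a\in A_n$; this is the information the formulas below actually consume.

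For the spaces, fix $n$ and recall that the underlying set of $\U{n}$ is $\conjunto{C\subseteq A_n : C\neq\emptyset,\ \diam(C)<2\varepsilon_n}$. Since the distances among points of $A_n$ are fixed, $\diam(C)$ is determined for every $C\subseteq A_n$, so this underlying set is a function of the pair $(A_n,\varepsilon_n)$ alone. For the topology I would invoke the closure description recalled in Section \ref{hyperspaces}, namely $\overline{\conjunto{C}}=\conjunto{D:C\subseteq D}$: restricted to the finite set $\U{n}$ this is exactly the Alexandroff topology of the poset $(\U{n},\subseteq)$, as already noted when $2^{A_n}_u$ was identified as a finite space under $\subseteq$. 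Because the containment relation among the listed subsets is intrinsic to the underlying set, the topological space $\U{n}$ is pinned down by $(A_n,\varepsilon_n)$ with no residual dependence on the ambient hyperspace.

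For the maps, the bonding map is $p_{n,n+1}(C)=r_{A_n}(C)$ by construction (Lemma \ref{teo:adjustedapp}), and the definition of the extension together with the nearby map yields $r_{A_n}(C)=\bigcup_{x\in C}A_n(x)$, where $A_n(x)=\conjunto{a\in A_n:\dist{x}{a}=\dist{x}{A_n}}$. For each $x\in C\subseteq A_{n+1}$ this set is computed purely from the cross-distances $\dist{x}{a}$, $a\in A_n$, which are already available, so $p_{n,n+1}(C)$ is a function of $(A_n,A_{n+1})$; that its value lands in $\U{n}$ is precisely the well-definedness half of Lemma \ref{teo:adjustedapp}, but the value itself is fixed by the formula regardless. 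Assembling the two parts recovers the whole sequence coordinatewise, and since $\gamma_n=\sup\conjunto{\dist{x}{A_n}:x\in X}$ is itself a function of $A_n$, the triple in $\fas$ is even mildly redundant. I expect no serious obstacle: the only point that warrants care, and the natural place a gap could hide, is the claim that the subspace topology on the finite set $\U{n}$ is intrinsic, i.e. recoverable from the containment relation alone and not secretly dependent on how $A_n$ sits inside $2^X_u$; this is exactly what the closure formula for $2^X_u$ guarantees, so once it is cited the verification is routine.
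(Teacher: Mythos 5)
Your proposal is correct and coincides with the paper's (implicit) justification: the remark is stated without proof precisely because, as you verify, the underlying set of each $U_{2\varepsilon_n}(A_n)$ is a function of $(A_n,\varepsilon_n)$ via the diameters computed in $(X,\textrm{d})$, its topology is the containment poset already identified in Section \ref{mainconstruction} (``the topology of $2^A_u$ as a subspace of $2^X_u$ is a finite space with the relation $\subset$''), and each bonding map is forced by the formula $p_{n,n+1}(C)=r_{A_n}(C)=\bigcup_{x\in C}A_n(x)$ from Lemma \ref{teo:adjustedapp}. Your two added observations --- that the intrinsic closure formula $\overline{\{C\}}=\{D: C\subseteq D\}$ makes the subspace topology independent of the ambient hyperspace, and that $\gamma_n$ is redundant since it is itself a function of $A_n$ --- are sound and simply make explicit what the paper's definitions already supply.
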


\begin{obs}
Theorem \ref{teo:mainconstruction} states that every compact metric space has a \textsc{fas}. In general, \textsc{fas} are not unique.
\end{obs}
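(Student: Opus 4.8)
The plan is to settle this non-uniqueness assertion not by a structural argument but by exhibiting, for a single concrete compact metric space, two genuinely different \textsc{fas}. The underlying point is that the construction behind Theorem \ref{teo:mainconstruction} rests entirely on choices that are never canonical: at the first stage one picks $\varepsilon_1>0$ and a finite $\varepsilon_1$-approximation $A_1$, and at each later stage, having fixed $(\varepsilon_n,A_n)$, one is free to choose any $\varepsilon_{n+1}$ adjusted to $A_n$ together with any finite $\varepsilon_{n+1}$-approximation $A_{n+1}$, Lemma \ref{teo:adjustedapp} then supplying the bonding map $p_{n,n+1}$. Distinct admissible choices yield distinct data $\fas$, hence distinct \textsc{fas}; it remains only to make one such pair explicit and to check the two are truly different objects.

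Concretely, I would take $X=[0,1]$ with the usual metric and $\varepsilon_1=1$, and consider the two finite $1$-approximations
\[A_1=\conjunto{\tfrac12},\qquad A_1'=\conjunto{\tfrac14,\tfrac34}.\]
Both are legitimate: every $x\in[0,1]$ lies within distance $\tfrac12<1$ of $A_1$ and within distance $\tfrac14<1$ of $A_1'$, so in either case $\gamma_1=\sup_{x}\dist{x}{A_1}<\varepsilon_1$ and adjusted values $\varepsilon_2$ exist. Each first term is then completed to a full \textsc{fas} by the inductive mechanism of Theorem \ref{teo:mainconstruction}: given $(\varepsilon_n,A_n)$, choose $\varepsilon_{n+1}<\frac{\varepsilon_n-\gamma_n}{2}$ (which is adjusted to $A_n$ and, being smaller than $\varepsilon_n/2$, forces $\{\varepsilon_n\}_{\todon}$ to decrease to zero) together with any finite $\varepsilon_{n+1}$-approximation $A_{n+1}$, invoking Lemma \ref{teo:adjustedapp} for the continuity of the bonding maps. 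This produces two \textsc{fas} of $[0,1]$ agreeing on nothing but the ambient space.

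To see that the two \textsc{fas} are genuinely different, I would compare their first finite spaces $U_{2\varepsilon_1}(A_1)$ and $U_{2\varepsilon_1}(A_1')$, recalling $\varepsilon_1=1$ so the diameter threshold is $2\varepsilon_1=2$. Since $2^{A_1}=\conjunto{\conjunto{1/2}}$, the space $U_{2\varepsilon_1}(A_1)$ is a single point. By contrast $A_1'$ has the three non-empty subsets $\conjunto{1/4}$, $\conjunto{3/4}$ and $\conjunto{1/4,3/4}$, and $\diam\conjunto{1/4,3/4}=1/2<2$, so $U_{2\varepsilon_1}(A_1')$ is a three-point space (the two singletons lying below the pair in the order $\subset$). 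A one-point space and a three-point space are not homeomorphic, so the two inverse sequences cannot be matched even at their initial terms. Hence the two \textsc{fas} differ already under the strict reading of a \textsc{fas} as an inverse sequence of finite spaces, and \emph{a fortiori} under the reading as the data $\fas$.

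There is essentially no hard step: a non-uniqueness claim is discharged by one well-chosen example. The only items needing (routine) verification are that both candidate sequences are admissible \textsc{fas}---which reduces to checking the adjusted inequality at each stage and that $\{\varepsilon_n\}_{\todon}$ is decreasing with limit zero, both guaranteed by the explicit choice above and Lemma \ref{teo:adjustedapp}---and that the two are not the same object, which is witnessed by the non-homeomorphic first terms. If one wished to stress how abundant the non-uniqueness is, one could note further that even holding the approximations fixed, the freedom in the null sequence $\{\varepsilon_n\}_{\todon}$ already yields distinct data $\fas$, so uniqueness fails in every reasonable sense.
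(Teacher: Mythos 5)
Your proposal is correct, and it is worth noting that the paper itself offers no proof at all for this remark: non-uniqueness is left as an evident consequence of the fact that every stage of Theorem \ref{teo:mainconstruction} involves non-canonical choices (of $\varepsilon_1$, of $A_1$, and inductively of each adjusted $\varepsilon_{n+1}$ and each $\varepsilon_{n+1}$-approximation $A_{n+1}$). Your argument makes this explicit and, usefully, goes one step further than the implicit reasoning: merely choosing different data $\fas$ would only show the \emph{data} differ, whereas under the paper's strict reading of a \textsc{fas} as the inverse sequence $\{\U{n},p_{n,n+1}\}$ one could conceivably worry that different data produce the same sequence of finite spaces. Your example on $X=[0,1]$ with $\varepsilon_1=1$, $A_1=\conjunto{\tfrac12}$ versus $A_1'=\conjunto{\tfrac14,\tfrac34}$ closes that loophole: the computations $\card\bigl(U_{2\varepsilon_1}(A_1)\bigr)=1$ and $\card\bigl(U_{2\varepsilon_1}(A_1')\bigr)=3$ are both correct (every non-empty subset of $A_1'$ has diameter at most $\tfrac12<2=2\varepsilon_1$), so the initial terms are non-homeomorphic finite spaces and the two sequences differ in the strongest sense. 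The only point deserving care, which you do handle, is that each initial choice really extends to a full \textsc{fas}: the inequality $\varepsilon_{n+1}<\frac{\varepsilon_n-\gamma_n}{2}\leq\frac{\varepsilon_n}{2}$ guarantees $\varepsilon_n\to 0$ (matching the paper's lemma that $\varepsilon_n<\frac{\varepsilon_1}{2^{n-1}}$ for $n>1$), and Lemma \ref{teo:adjustedapp} supplies the continuous bonding maps. In short, your route is a concrete instantiation of the freedom the paper takes for granted, and it buys a slightly stronger conclusion than the remark strictly requires.
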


We can now use the Alexandroff-McCord correspondence to obtain an inverse sequence of polyhedra. For every $n\in\mathbb{N}$ and finite $\textrm{T}_0$ space $U_{2\varepsilon_n}(A_n)$, there exists a simplicial complex $\mathcal{K}(U_{2\varepsilon_n}(A_n))$ with vertex set the points $D\in U_{2\varepsilon_n}(A_n)$ and simplexes $\langle D_0,D_1,\ldots,D_s\rangle$ with $D_0\subset D_1\subset\ldots\subset D_s$ such that there is a weak homotopy equivalence between the finite space and the geometric realization of the simplicial complex $$f_n:|\mathcal{K}(U_{2\varepsilon_n}(A_n))|\longrightarrow U_{2\varepsilon_n}(A_n),$$ defined as follows. Every point $x\in|\mathcal{K}(U_{2\epsilon_n}(A_n))|$ is contained in the interior of a unique simplex $\sigma=\langle D_0,D_1,\ldots,D_s\rangle$, so we can define $f_n(x)=D_0$.

We also have simplicial maps\footnote{Following McCords paper's notation we should write $\mathcal{K}(p_{n,n+1})$ for the simplicial maps but we will omit this notation, using the same as for the maps between the finite spaces, $p_{n,n+1}$, for the sake of simplicity.} between the polyhedra, defined on the vertices and extended as usual to simplices: 
\begin{eqnarray*}
p_{n,n+1}:\mathcal{K}(U_{2\epsilon_{n+1}}(A_{n+1})) & \longrightarrow & \mathcal{K}(U_{2\epsilon_n}(A_n)) \\
D & \longmapsto & p_{n,n+1}(D) \\
\langle D_0, D_1, \ldots, D_s\rangle & \longmapsto & \langle p_{n,n+1}(D_0), p_{n,n+1}(D_1), \ldots, p_{n,n+1}(D_s)\rangle
\end{eqnarray*} 
where, if $$D_0\subset D_1\subset\ldots\subset D_s,$$ then $$p_{n,n+1}(D_0)\subset p_{n,n+1}(D_1)\subset\ldots\subset p_{n,n+1}(D_s).$$ The realizations of these simplicial maps satisfy that, for every $n\in\mathbb{N}$, the diagram 
$$\xymatrix@C=2cm{|\mathcal{K}(U_{2\varepsilon_n}(A_n))|\ar[d]_{f_n} & |\mathcal{K}(U_{2\varepsilon_{n+1}}(A_{n+1}))|\ar[d]^{f_{n+1}}\ar[l]_{|p_{n,n+1}|}\\
U_{2\varepsilon_n}(A_n) & U_{2\varepsilon_{n+1}}(A_{n+1})\ar[l]^{p_{n,n+1}}}$$ 
commutes. So we obtain an inverse sequence of polyhedra and a map between the inverse sequences of finite spaces and polyhedra.
\begingroup\makeatletter\def\f@size{10}\check@mathfonts
$$\xymatrix{|\mathcal{K}(U_{2\varepsilon_1}(A_1))|\ar[d]_{f_1} & |\mathcal{K}(U_{2\varepsilon_{2}}(A_{2}))|\ar[l]_{|p_{1,2}|}\ar[d]_{f_2}&\ldots\ar[l]&|\mathcal{K}(U_{2\varepsilon_n}(A_n))|\ar[d]_{f_n}\ar[l] & |\mathcal{K}(U_{2\varepsilon_{n+1}}(A_{n+1}))|\ar[d]^{f_{n+1}}\ar[l]_{|p_{n,n+1}|}&\ldots\ar[l]\\
U_{2\varepsilon_1}(A_1) & U_{2\varepsilon_{2}}(A_{2})\ar[l]^{p_{1,2}}&\ldots\ar[l]&U_{2\varepsilon_n}(A_n)\ar[l] & U_{2\varepsilon_{n+1}}(A_{n+1})\ar[l]^{p_{n,n+1}}&\ldots\ar[l]}$$ 
\endgroup
Every inverse sequence of polyhedra $\left\lbrace|\mathcal{K}(U_{2\varepsilon_{n}}(A_{n})),|p_{n,n+1}|\right\rbrace$ obtained in this way is called a \emph{Polyhedral Approximative Sequence} (or \textsc{pas}) of the space $X$.

The analysis of these inverse sequences is proposed in \cite{MCLepsilon} as a reconstruction process of topological properties of a compact metric space. We will show here that every \textsc{fas} is able to recover the homotopy type of the original space. The shape type reconstruction using \textsc{pas} is show in \cite{preprintShape}. We propose the use of this sequences as an alternative way of producing persistence modules from a point cloud in a process that will be called inverse persistence.
\section{Homotopical reconstruction by \textsc{fas}}\label{homotopicalreconstruction}
In this section, we prove the main result concerning the reconstruction of compact metric spaces. It asserts that in the inverse limit of every \textsc{fas} we can find the homotopy structure of our space.
\begin{teo}\label{teo:limitefinito}
Let $X$ be a compact metric space. The inverse limit of every \textsc{fas} $$\mathcal{X}=\varprojlim\{\U{n},p_{n,n+1}\}$$ contains a subspace $\mathcal{X}^*\subset\mathcal{X}$ homeomorphic to $X$ which is a strong deformation retract of $\mathcal{X}$.
\end{teo}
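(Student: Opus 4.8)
The plan is to produce two continuous maps, a retraction $\rho\colon\mathcal{X}\to X$ (a ``limit'' map) and a section $s\colon X\to\mathcal{X}$ with $\rho\circ s=\mathrm{id}_X$, then set $\mathcal{X}^{*}=s(X)$, and finally build a strong deformation retraction of $\mathcal{X}$ onto $\mathcal{X}^{*}$. A point of $\mathcal{X}$ is a thread $\xi=(C_n)$ with $C_n\in\U{n}$ (so $C_n\subseteq A_n$ and $\diam(C_n)<2\varepsilon_n$) and $C_n=r_{A_n}(C_{n+1})=\bigcup_{a\in C_{n+1}}A_n(a)$. The crucial metric observation is that each $a\in C_{n+1}$ has its closest $A_n$-points inside $C_n$, so $\dist{a}{C_n}\le\dist{a}{A_n}<\varepsilon_n$; since being adjusted forces $\varepsilon_{n+1}<\tfrac{\varepsilon_n-\gamma_n}{2}<\tfrac{\varepsilon_n}{2}$, any selection $a_n\in C_n$ satisfies $\dist{a_n}{a_{n+1}}<3\varepsilon_n$ and the $\{\varepsilon_n\}$ are summable. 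Hence $(a_n)$ is Cauchy, its limit is independent of the choice (because $\diam(C_n)\to0$), and I would define $\rho(\xi)$ to be this common limit. Continuity of $\rho$ is checked directly: given a ball $\bola{x}{r}$ around $\rho(\xi)$, pick $n$ with $12\varepsilon_n<r$; the basic open set $\pi_n^{-1}(\{D\in\U{n}:D\subseteq C_n\})$ (down-sets are open in the upper semifinite topology) contains $\xi$ and is mapped by $\rho$ into $\bola{x}{r}$.

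\noindent\textbf{The section.} One must resist the temptation to set $s(x)_n=A_n(x)$: the closest-point sets need \emph{not} be compatible with the bonding maps, since $p_{n,n+1}(A_{n+1}(x))=\bigcup_{a\in A_{n+1}(x)}A_n(a)$ can differ from $A_n(x)$ whenever $\varepsilon_n$ greatly exceeds the actual mesh $\gamma_n$ (a nearest point of $A_{n+1}$ to $x$ may resolve onto a different $A_n$-point than $x$ does). Instead I would push the nearby sets down from deep levels and pass to the limit: writing $p_{n,k}=p_{n,n+1}\circ\cdots\circ p_{k-1,k}$, define $s(x)_n=\lim_{k\to\infty}p_{n,k}(A_k(x))$. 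The key technical step is that this sequence of finite subsets of $A_n$ \emph{stabilises}; this is where Lemma~\ref{teo:alpha} (upper semicontinuity of $y\mapsto A_n(y)$) together with the adjustment inequalities enter, controlling the pushforwards of the sets $A_k(x)\subseteq\bola{x}{\varepsilon_k}$ as $k\to\infty$. Once stabilisation is established, the compatibility $p_{n,n+1}(s(x)_{n+1})=s(x)_n$ is automatic (evaluate both at a sufficiently deep $k$), so $s(x)\in\mathcal{X}$; that $\rho(s(x))=x$ follows because $s(x)_n$ lies within $O(\varepsilon_n)$ of $x$; and continuity of $s$ follows again from the upper semicontinuity of the nearby maps. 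I would then observe that, because $\rho\circ s=\mathrm{id}_X$ with both maps continuous, $s$ is injective and its inverse on $\mathcal{X}^{*}=s(X)$ is the restriction of $\rho$; hence $s$ is a topological embedding and $\mathcal{X}^{*}\cong X$, with no appeal to any Hausdorff property (which $\mathcal{X}$ lacks).

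\noindent\textbf{The deformation retraction.} Put $r=s\circ\rho\colon\mathcal{X}\to\mathcal{X}$; then $r(\mathcal{X})=\mathcal{X}^{*}$ and $r|_{\mathcal{X}^{*}}=\mathrm{id}$ since $s\rho s=s$. It remains to homotope $\mathrm{id}_{\mathcal{X}}$ to $r$ rel $\mathcal{X}^{*}$. Because $\rho\circ r=\rho$, the homotopy should move each thread only inside its own fibre $\rho^{-1}(x)$, so the task reduces to contracting each fibre coherently to the distinguished thread $s(x)$. Here I would exploit the Alexandroff (poset) structure of the finite $T_0$ spaces $\U{n}$: two $\subseteq$-comparable elements of a finite space are joined by a canonical elementary path, and the plan is to exhibit, for a thread $(C_n)\in\rho^{-1}(x)$, comparabilities relating $C_n$ to $s(x)_n$ at each level and to run all these elementary paths with a single shared parameter, reparametrised against the level index $n$ (a telescope-type homotopy).

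\noindent\textbf{Main obstacle.} The hard part is precisely this last construction. The coordinates of a thread are not free: they are linked by every bonding map $p_{n,n+1}$, so one cannot deform them independently. The homotopy $H\colon\mathcal{X}\times[0,1]\to\mathcal{X}$ must be arranged so that \emph{every} intermediate value $H_t(\xi)$ is again a genuine thread (compatible with all $p_{n,n+1}$ simultaneously), while $H_0=\mathrm{id}$, $H_1=r$, and $H_t$ fixes $\mathcal{X}^{*}$ pointwise. On top of this, continuity of $H$ must be verified in the upper semifinite, non-Hausdorff topology and then survive the passage to the infinite product defining $\mathcal{X}$. Organising the level-wise elementary paths into one parameter so that compatibility and continuity hold at all times is the technical core of the theorem; the metric estimates above (summability of $\{\varepsilon_n\}$, the $\varepsilon_n$-closeness across consecutive coordinates, and Lemma~\ref{teo:alpha}) are the tools I expect to need to push it through.
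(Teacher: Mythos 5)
Your overall skeleton (a limit map $\rho$, a section $s$, $\mathcal{X}^{*}=s(X)$, then a deformation) matches the paper's, and two pieces of it are sound: your $\rho$ is the paper's $\varphi$ (Hausdorff-metric limits and point-selections agree since $\diam(C_n)\to 0$), and your observation that $s$ is automatically an embedding because $\rho|_{s(X)}$ is a continuous inverse is a legitimate simplification of the paper's compact-to-Hausdorff argument. But there are two genuine gaps, and they are exactly the two places where the paper has to work.

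\textbf{The section is broken: the stabilisation claim is false.} You define $s(x)_n=\lim_{k\to\infty}p_{n,k}(A_k(x))$ and assert that the finite sets $p_{n,k}(A_k(x))$ stabilise. They need not, because these sets are \emph{not} nested in $k$, and a FAS can be built so that they oscillate forever. Sketch: take $X=[-2,2]$, $x=0$, and $A_1=\{u,v\}=\{-1,+1\}$. At each deeper level $k$ place exactly two special points $-l_k$ and $+r_k$ near $0$ (all other points of $A_k$ staying at macroscopic distance, so the covering and adjustedness conditions are easy to satisfy), with the scales $l_k\approx r_k$ shrinking super-geometrically and with the gap $|l_k-r_k|$ chosen \emph{much smaller than the scale of level $k+1$}, its sign alternating with $k$. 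Then $A_k(x)$ is a single point lying alternately to the left and to the right of $x$; and because each gap is negligible at the next scale, the nearest-point chains respect sides, i.e.\ $A_k(-l_{k+1})=\{-l_k\}$ and $A_k(+r_{k+1})=\{+r_k\}$. Consequently $p_{1,k}(A_k(x))$ equals $\{u\}$ for one parity of $k$ and $\{v\}$ for the other, forever. The paper avoids precisely this trap: instead of the nearest-point sets it pushes down the larger sets $X^k=\textrm{B}(x,\varepsilon_k)\cap A_k$, for which adjustedness gives $p_{m,m+1}(X^{m+1})\subset X^m$, hence $p_{n,m}(X^m)$ \emph{is} a nested decreasing sequence of finite sets, which stabilises, and $X_n^{*}$ is its intersection. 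So your instinct that $\{A_n(x)\}_n$ itself is not a thread was correct, but your repair does not work; the repair needs the $\varepsilon$-balls, not the nearest points. (Your claims that compatibility and continuity of $s$ "follow" also collapse with the stabilisation, though they do hold for the paper's $X_n^{*}$.)

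\textbf{The deformation retraction is not constructed.} Your last two sections describe a plan (telescoping level-wise elementary order-paths with a shared parameter) and then honestly list the obstructions — intermediate stages must be genuine threads, continuity must survive the product — but no construction is given, and those obstructions are the actual content of the theorem. The paper's solution requires no telescope and no intermediate threads at all: the homotopy is $H(\{C_n\},t)=\{C_n\}$ for $t\in[0,1)$ and $H(\{C_n\},1)=\phi(\varphi(\{C_n\}))$. What makes this jump continuous is \emph{maximality}: for any thread $\{C_n\}$ over $x$ one has $C_n\subset X_n^{*}$ for every $n$, so $\{C_n\}$ lies in every basic neighborhood $\left(2^{X_1^{*}}\times\cdots\times 2^{X_r^{*}}\times \U{r+1}\times\cdots\right)\cap\mathcal{X}$ of $\phi(\varphi(\{C_n\}))$; equivalently, the retraction sends each point into the closure of that point, which in an Alexandroff-type (upper semifinite, non-Hausdorff) topology is exactly the situation where a map comparable to the identity is homotopic to it. Continuity at $(\{C_n\},1)$ is then checked with the same metric estimates used for the continuity of $\phi$. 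Without this idea — or a genuinely workable substitute — the "strong deformation retract" half of the statement remains unproved in your proposal.
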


In order to prove this result, we need some technical lemmas about the above construction. Consider a \textsc{fas} $\{\U{n},p_{n,n+1}\}$ of a compact metric space $X$ and the sequences of numbers with usual notation $\fas$. For every $n\in\mathbb{N}$, we write $\overline{\varepsilon_n}=\frac{\varepsilon_n+\gamma_n}{2}$ and $\underline{\varepsilon_n}=\frac{\varepsilon_n-\gamma_n}{2}$. They clearly satisfy $\overline{\varepsilon_n},\enspace\underline{\varepsilon_n}<\varepsilon_n$ and $\overline{\varepsilon_n}+\underline{\varepsilon_n}=\varepsilon_n$.
\begin{lem}
For every $n<m$, we have $$\sum^m_{l=n}\gamma_{l}<\overline{\varepsilon_n}.$$
\end{lem}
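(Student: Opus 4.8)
The plan is to reduce the claim to a tail estimate and then exploit the \emph{adjusted} condition in a sharper form than the crude bound $\gamma_l<\varepsilon_l$. Since $\overline{\varepsilon_n}=\gamma_n+\underline{\varepsilon_n}$, the inequality $\sum_{l=n}^m\gamma_l<\overline{\varepsilon_n}$ is equivalent, after splitting off the first summand, to the tail bound $\sum_{l=n+1}^m\gamma_l<\underline{\varepsilon_n}$. So the whole problem is to control the tail starting one index later. First I would record the reformulation of ``$\varepsilon_{l+1}$ adjusted to $A_l$'': the defining inequality $\varepsilon_{l+1}<\frac{\varepsilon_l-\gamma_l}{2}=\underline{\varepsilon_l}$ rearranges to $\gamma_l<\varepsilon_l-2\varepsilon_{l+1}$, a per-step estimate that is the real engine of the proof.

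With that in hand, the key step is to sum this estimate over the tail. For $l$ ranging from $n+1$ to $m$ one gets $\sum_{l=n+1}^m\gamma_l<\sum_{l=n+1}^m(\varepsilon_l-2\varepsilon_{l+1})$, and the right-hand side telescopes: every intermediate $\varepsilon_l$ occurs once with coefficient $+1$ and once with coefficient $-2$, leaving $\varepsilon_{n+1}-\sum_{l=n+2}^m\varepsilon_l-2\varepsilon_{m+1}$. Since all the $\varepsilon_l$ are positive, this is strictly less than $\varepsilon_{n+1}$. Invoking once more the adjusted condition, $\varepsilon_{n+1}<\underline{\varepsilon_n}$, so the tail is $<\underline{\varepsilon_n}$. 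Adding back $\gamma_n$ gives $\sum_{l=n}^m\gamma_l<\gamma_n+\underline{\varepsilon_n}=\gamma_n+\frac{\varepsilon_n-\gamma_n}{2}=\overline{\varepsilon_n}$, as desired. All inequalities involved are strict and the telescoping range $n+1\leqslant l\leqslant m$ is nonempty because $n<m$, so no boundary case is lost.

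The conceptual obstacle I expect is precisely the temptation to use only $\gamma_l<\varepsilon_l$ together with the geometric decay $\varepsilon_{l+1}<\frac12\varepsilon_l$: this yields $\sum_{l=n}^m\gamma_l<\sum_{l\geqslant n}\varepsilon_l<2\varepsilon_n$, which overshoots the target $\overline{\varepsilon_n}<\varepsilon_n$ by a factor larger than two and is therefore useless. The point is that one must (i) keep the full strength of $\gamma_l<\varepsilon_l-2\varepsilon_{l+1}$ so that the telescoping produces a bound of size $\varepsilon_{n+1}$ rather than $2\varepsilon_n$, and (ii) treat the very first term $\gamma_n$ separately, since it is exactly this decoupling that matches the right-hand side $\overline{\varepsilon_n}=\gamma_n+\underline{\varepsilon_n}$ on the nose. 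A clean alternative would be an induction proving $\sum_{l=k}^m\gamma_l<\overline{\varepsilon_k}$ for all $m\geqslant k$, whose inductive step uses $\sum_{l=k+1}^m\gamma_l<\varepsilon_{k+1}<\underline{\varepsilon_k}$; but the telescoping argument above makes the same mechanism explicit without an induction.
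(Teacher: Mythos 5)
Your proof is correct and rests on exactly the same mechanism as the paper's: the per-step adjusted inequality $\gamma_l<\varepsilon_l-2\varepsilon_{l+1}$, a reduction of the tail $\sum_{l=n+1}^m\gamma_l$ to $\varepsilon_{n+1}$, and the final step $\gamma_n+\varepsilon_{n+1}<\gamma_n+\underline{\varepsilon_n}=\overline{\varepsilon_n}$. The paper organizes this as an iterated absorption from the top index downward (using the slightly weakened form $\gamma_l+\varepsilon_{l+1}<\varepsilon_l$), whereas you sum and telescope, but these are the same induction written in two ways, as you yourself note.
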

\begin{proof}
For every $n\in\mathbb{N}$, we have that $\varepsilon_{n+1}<\frac{\varepsilon_n-\gamma_n}{2}$, so $\gamma_n+\varepsilon_{n+1}<\gamma_n+2\varepsilon_{n+1}<\varepsilon_n$. \\Now, let $n<m$ be natural numbers, if we write $m=n+k,\enspace k>0$, we can apply the previous observation inductively to obtain
\begin{eqnarray}
\sum^m_{l=n}\gamma_{l}=\sum_{i=0}^k\gamma_{n+i}&<&\left(\sum^{k-1}_{i=0}\gamma_{n+i}\right)+\varepsilon_{n+k}<\left(\sum_{i=0}^{k-2}\gamma_{n+i}\right)+\varepsilon_{n+k-1}<\ldots<\nonumber\\
&<&\gamma_n+\varepsilon_{n+1}<\gamma_n+\frac{\varepsilon_n-\gamma_n}{2}=\frac{\varepsilon_n+\gamma_n}{2}\qedhere\nonumber\qedhere
\end{eqnarray}
\end{proof}

\begin{obs}
The previous lemma gives us a bound in terms of the lower term, so it is readily seen that the infinite sum converges, and $$\sum_{l=n}^{\infty}\gamma_l<\varepsilon_n.$$
\end{obs}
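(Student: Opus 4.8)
The plan is to deduce this directly from the previous lemma by a standard monotone-convergence argument, with the only subtlety being how strictness is preserved. First I would observe that each $\gamma_l$ is a supremum of distances, hence $\gamma_l\geqslant 0$, so for a fixed $n$ the partial sums $S_m=\sum_{l=n}^m\gamma_l$ form a non-decreasing sequence in $m$. The previous lemma asserts precisely that $S_m<\overline{\varepsilon_n}$ for every $m>n$, so the sequence $(S_m)_{m>n}$ is bounded above by $\overline{\varepsilon_n}$.

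Next I would invoke the fact that a non-decreasing sequence of reals bounded above converges to its supremum. This simultaneously yields the convergence of the series $\sum_{l=n}^{\infty}\gamma_l$ (the first half of the remark) and the bound $\sum_{l=n}^{\infty}\gamma_l=\sup_{m>n}S_m\leqslant\overline{\varepsilon_n}$. I note that passing to the limit only gives a \emph{non-strict} inequality at this stage, since a supremum of quantities each strictly below $\overline{\varepsilon_n}$ may itself equal $\overline{\varepsilon_n}$.

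The final step is to strengthen the bound from $\overline{\varepsilon_n}$ to $\varepsilon_n$. Here I would use the inequality $\gamma_n<\varepsilon_n$ already recorded when $\gamma_n$ was introduced (it holds because, by compactness of $X$, the continuous map $x\mapsto\mathrm{d}(x,A_n)$ attains its maximum, which must be strictly below $\varepsilon_n$ since $A_n$ is a finite $\varepsilon_n$-approximation). Then $\overline{\varepsilon_n}=\tfrac{\varepsilon_n+\gamma_n}{2}<\tfrac{\varepsilon_n+\varepsilon_n}{2}=\varepsilon_n$, and chaining this with the previous step gives $\sum_{l=n}^{\infty}\gamma_l\leqslant\overline{\varepsilon_n}<\varepsilon_n$, as claimed.

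I expect no genuine obstacle in this argument; it is essentially a one-line consequence of the lemma. The only point requiring care is that the passage to the limit degrades the strict inequality of the lemma into a non-strict one, so the strictness in the conclusion must be supplied separately by the strict gap $\overline{\varepsilon_n}<\varepsilon_n$ coming from $\gamma_n<\varepsilon_n$. This is precisely why the preceding lemma was phrased in terms of the intermediate quantity $\overline{\varepsilon_n}$ rather than against $\varepsilon_n$ directly: the extra room between $\overline{\varepsilon_n}$ and $\varepsilon_n$ is exactly what absorbs the loss of strictness in the limit.
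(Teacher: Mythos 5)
Your argument is correct and is exactly the reasoning the paper leaves implicit in this remark (the paper offers no written proof beyond ``readily seen''): monotone partial sums bounded by $\overline{\varepsilon_n}$ via the previous lemma, then strictness recovered from $\overline{\varepsilon_n}=\tfrac{\varepsilon_n+\gamma_n}{2}<\varepsilon_n$, which holds since $\gamma_n<\varepsilon_n$. Your observation that the intermediate quantity $\overline{\varepsilon_n}$ is what absorbs the loss of strictness in the limit is a faithful reading of why the lemma is stated that way.
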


\begin{lem}
For every $n>1$, $\varepsilon_{n}<\frac{\varepsilon_1}{2^{n-1}}$.
\end{lem}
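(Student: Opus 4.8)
The plan is to extract from the ``adjusted'' condition a clean geometric-type recursion $\varepsilon_{n+1} < \tfrac{1}{2}\varepsilon_n$ and then iterate it. The key observation is that, by definition, $\varepsilon_{n+1}$ being adjusted to $A_n$ means
\[
\varepsilon_{n+1} < \frac{\varepsilon_n - \gamma_n}{2},
\]
where $\gamma_n = \sup\{\dist{x}{A_n} : x \in X\}$ is a distance, hence $\gamma_n \geqslant 0$. Dropping this nonnegative term only enlarges the right-hand side, so
\[
\varepsilon_{n+1} < \frac{\varepsilon_n - \gamma_n}{2} \leqslant \frac{\varepsilon_n}{2}
\qquad\text{for every } \todon.
\]

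First I would establish the base case $n=2$ directly: applying the displayed bound with $n=1$ gives $\varepsilon_2 < \tfrac{\varepsilon_1}{2} = \tfrac{\varepsilon_1}{2^{2-1}}$, which is the claim. Then I would proceed by induction on $n$. Assuming $\varepsilon_n < \tfrac{\varepsilon_1}{2^{n-1}}$ as the induction hypothesis, I combine it with the halving inequality to obtain
\[
\varepsilon_{n+1} < \frac{\varepsilon_n}{2} < \frac{1}{2}\cdot\frac{\varepsilon_1}{2^{n-1}} = \frac{\varepsilon_1}{2^{n}},
\]
which is exactly the statement for $n+1$. This closes the induction and yields $\varepsilon_n < \tfrac{\varepsilon_1}{2^{n-1}}$ for all $n>1$.

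Honestly, there is no serious obstacle here: the entire content is the single-line reduction of the adjusted condition to $\varepsilon_{n+1} < \tfrac{1}{2}\varepsilon_n$, after which the result is a routine induction. The only point requiring any care is remembering that $\gamma_n \geqslant 0$ (so that discarding it is a valid weakening of the inequality) and that the adjusted condition is strict, so the inherited inequalities remain strict throughout. One could equally well phrase the argument as a telescoping product $\varepsilon_n < \prod_{k=1}^{n-1}\tfrac{1}{2}\,\varepsilon_1$, but the induction is the most transparent route.
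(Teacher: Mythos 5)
Your proof is correct and is essentially identical to the paper's: both reduce the adjusted condition $\varepsilon_{n+1}<\frac{\varepsilon_n-\gamma_n}{2}$ to the halving bound $\varepsilon_{n+1}<\frac{\varepsilon_n}{2}$ and then run the obvious induction. Your extra care in writing $\frac{\varepsilon_n-\gamma_n}{2}\leqslant\frac{\varepsilon_n}{2}$ (rather than a strict inequality, to cover the degenerate case $\gamma_n=0$) is a minor refinement over the paper's wording, but the argument is the same.
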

\begin{proof}
We proceed by induction over $n$. The first case is clear, $\varepsilon_2<\frac{\varepsilon_1-\gamma_1}{2}<\frac{\varepsilon_1}{2}$. Now, let us suppose that $\varepsilon_n<\frac{\varepsilon_1}{2^{n-1}}$. Then $\varepsilon_{n+1}<\frac{\varepsilon_n-\gamma_n}{2}<\frac{\varepsilon_n}{2}<\frac{\varepsilon_1}{2^n}\qedhere$
\end{proof}

\begin{prop}
Let $n<m$ be a pair of natural numbers. Let $a_n\in A_n$ and $a_m\in A_m$ be two points of $X$ such that $a_n\in p_{n,m}(\{a_m\})$. Then $\textrm{d}(a_n,a_m)<\overline{\varepsilon_n}$.
\end{prop}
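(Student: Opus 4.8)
The plan is to unwind the membership hypothesis $a_n\in p_{n,m}(\{a_m\})$ into a finite chain of points descending from $A_m$ to $A_n$ one approximation at a time, and then to bound each step by the corresponding $\gamma_k$. The starting point is that $p_{n,m}$ is the composition $p_{n,n+1}\circ\cdots\circ p_{m-1,m}$ of consecutive maps of the \textsc{fas}, and that each $p_{k,k+1}$ acts on a set $C$ as the extension of the nearby map, so that $p_{k,k+1}(C)=r_{A_k}(C)=\bigcup_{x\in C}A_k(x)$, where $A_k(x)$ denotes the set of closest points of $A_k$ to $x$. Writing $C_m=\{a_m\}$ and $C_k=p_{k,k+1}(C_{k+1})$ for $k=m-1,\ldots,n$, so that $C_n=p_{n,m}(\{a_m\})$, the hypothesis simply says $a_n\in C_n$.

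Next I would build the chain recursively from the union formula. Since $a_n\in C_n=\bigcup_{x\in C_{n+1}}A_n(x)$, there is a point $z_{n+1}\in C_{n+1}$ with $a_n\in A_n(z_{n+1})$; set $z_n=a_n$. Membership $z_{n+1}\in C_{n+1}$ furnishes in turn a point $z_{n+2}\in C_{n+2}$ with $z_{n+1}\in A_{n+1}(z_{n+2})$, and so on, until $z_{m-1}\in C_{m-1}=A_{m-1}(a_m)$ forces $z_m=a_m$. This yields points $a_n=z_n,z_{n+1},\ldots,z_m=a_m$ with $z_k\in A_k(z_{k+1})$ for every $k\in\{n,\ldots,m-1\}$.

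The metric estimate on each link is then immediate: $z_k\in A_k(z_{k+1})$ means exactly that $\textrm{d}(z_{k+1},z_k)=\textrm{d}(z_{k+1},A_k)$, and since $z_{k+1}\in X$ while $\gamma_k=\sup\{\textrm{d}(x,A_k):x\in X\}$, this gives $\textrm{d}(z_k,z_{k+1})\leqslant\gamma_k$. Applying the triangle inequality along the chain and then the first summation lemma of this section, $\sum_{l=n}^{m}\gamma_l<\overline{\varepsilon_n}$, I obtain
$$\textrm{d}(a_n,a_m)\leqslant\sum_{k=n}^{m-1}\gamma_k\leqslant\sum_{l=n}^{m}\gamma_l<\overline{\varepsilon_n},$$
where the middle inequality just discards the nonnegative term $\gamma_m$. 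This is the desired conclusion.

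The only genuinely delicate point is the recursive extraction of the chain: one must ensure that each $z_{k+1}$ lies in the \emph{intermediate image} $C_{k+1}$ and not merely somewhere in $A_{k+1}$, for otherwise the recursion could not continue. This is precisely what the union description $C_k=\bigcup_{x\in C_{k+1}}A_k(x)$ of the map $p_{k,k+1}$ guarantees, so the argument closes once that identity is made explicit. Everything else is bookkeeping with the indices and a direct appeal to the earlier lemma, and I expect no further obstruction.
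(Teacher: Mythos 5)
Your proof is correct and follows essentially the same route as the paper: decompose $a_n\in p_{n,m}(\{a_m\})$ into a chain $z_k\in A_k(z_{k+1})$ of closest-point relations, bound each link by $\gamma_k$, and conclude via the summation lemma $\sum_{l=n}^{m}\gamma_l<\overline{\varepsilon_n}$. The only difference is that you make explicit two steps the paper merely asserts --- the recursive extraction of the chain from the union formula $p_{k,k+1}(C)=\bigcup_{x\in C}A_k(x)$ and the per-link estimate $\textrm{d}(z_k,z_{k+1})=\textrm{d}(z_{k+1},A_k)\leqslant\gamma_k$ --- which is a welcome clarification, not a divergence.
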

\begin{proof}
Let us write $m=n+k,\enspace k>0$. The relation between the points means that there exists a chain of points between them. That is, there exist $a_{n+1}\in A_{n+1},\ldots,a_{n+k-1}\in A_{n+k-1}$ such that
\begin{eqnarray}
a_n\in p_{n,n+1}(\{a_{n+1}\}),\nonumber\\
a_{n+1}\in p_{n+1,n+2}(\{a_{n+2}\}),\nonumber\\
\ldots\nonumber\\
a_{n+k-1}\in p_{n+k-1,n+k}(\{a_{n+k}\}).\nonumber
\end{eqnarray}
Using the previous proposition, we can now estimate $$\textrm{d}(a_n,a_m)\leqslant\sum_{i=0}^k\textrm{d}(a_{n+l},a_{n+i+1})\leqslant\sum_{i=0}^k\gamma_{n+i}<\overline{\varepsilon_n}\qedhere$$
\end{proof}

Before starting the proof of the theorem, let us reinterpret this inverse limit as sequences of points in $2^X$, with the Haussdorff distance, that is, as sequences of points in $2^X_H$. We consider the inverse limit of the finite spaces. We will write the points of this limit as sequences $\{C_n\}_{n\in\mathbb{N}}\in\mathcal{X}$ ($\{C_n\}$ for short), where, for every $n\in\mathbb{N}$, $C_n\in U_{2\varepsilon_n}(A_n)$, and, for every pair $n<m$, $p_{n,m}(C_m)=C_n$. We have to think about this sequences as sets of points of each $\varepsilon$-approximation, related by a notion of proximity. It turns out that these sequences converge to points of $X$. To have a notion of measure and see this, we will use the Hausdorff distance of the hyperspace $2^X$ of non-empty closed subsets of $X$. It can by characterized(see section \ref{hyperspaces}) in the following way: For $C,D\in 2^X_H$ closed sets of $X$, we will say that the Hausdorff distance of $C$ and $D$ is $\textrm{d}_{\textrm{H}}(C,D)<\varepsilon$ if $C\subset \textrm{B}(D,\varepsilon)$ and $D\subset \textrm{B}(C,\varepsilon)$\footnote{Here, $\textrm{B}(C,\varepsilon)$ is the generalized ball of radius $\varepsilon$, i.e., the set of points $x\in X$ for which there exists a point $c$ of $C$ at distance $\textrm{d}(x,c)<\varepsilon$ or, equivalently, is the union of balls of radius $\varepsilon$ and center any point of $C$, that is, $\textrm{B}(C,\varepsilon)=\bigcup_{c\in C}\textrm{B}(c,\varepsilon)$.}. We are going to prove the following
\begin{prop} Every point of the inverse limit $\{C_n\}\in\mathcal{X}$ is a Cauchy sequence in $2^X_H$ that converges to a singleton $\{x\}$, with $x\in X$. 
\end{prop}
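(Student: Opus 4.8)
The plan is to read off the inverse-limit compatibility condition $p_{n,m}(C_m)=C_n$ in terms of the nearby maps, and then estimate the Hausdorff distance between terms of the sequence $\{C_n\}$, showing it is controlled by $\overline{\varepsilon_n}$ and hence tends to zero. First I would record the concrete meaning of the bonding maps: since $p_{n,n+1}(C)=r_{A_n}(C)=\bigcup_{a\in C}A_n(a)$, the relation between consecutive terms reads $C_n=\bigcup_{a\in C_{n+1}}A_n(a)$. Unwinding this in both directions will give the two chain statements that feed into the Proposition already proved above.

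Concretely, I would show that $C_n$ and $C_m$ (for $n<m$) are linked by chains in both directions. For the forward inclusion, each $a_n\in C_n$ lies in some $A_n(a_{n+1})$ with $a_{n+1}\in C_{n+1}$; iterating upward yields $a_{n+1}\in C_{n+1},\ldots,a_m\in C_m$ with $a_n\in p_{n,m}(\{a_m\})$. For the reverse inclusion, which is the delicate point, I would use that for any $a_m\in C_m$ the nonempty set $A_{m-1}(a_m)$ of its closest points in $A_{m-1}$ satisfies $A_{m-1}(a_m)\subseteq\bigcup_{a\in C_m}A_{m-1}(a)=C_{m-1}$; choosing $a_{m-1}\in A_{m-1}(a_m)$ and iterating downward produces $a_{m-1}\in C_{m-1},\ldots,a_n\in C_n$ with $a_n\in p_{n,m}(\{a_m\})$ as well. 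Applying the Proposition (chained points satisfy $\dist{a_n}{a_m}<\overline{\varepsilon_n}$) to both chains translates them into the Hausdorff inclusions $C_n\subset\bola{C_m}{\overline{\varepsilon_n}}$ and $C_m\subset\bola{C_n}{\overline{\varepsilon_n}}$, so that $\distH{C_n}{C_m}\leqslant\overline{\varepsilon_n}<\varepsilon_n$ for every $n<m$. Since $\{\varepsilon_n\}$ decreases to zero, this shows at once that $\{C_n\}$ is Cauchy in $2^X_H$; as $2^X_H$ is compact, hence complete, the sequence converges to some nonempty closed set $C\in 2^X_H$.

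Finally, to see that $C$ is a singleton, I would use that $C_n\in\U{n}$ forces $\diam(C_n)<2\varepsilon_n\to 0$. Either invoking continuity of the diameter functional on $2^X_H$ (via $|\diam(C)-\diam(D)|\leqslant 2\,\distH{C}{D}$), or, more elementarily, choosing representatives $x_n\in C_n$, checking that $\dist{x_n}{x_m}\leqslant\overline{\varepsilon_n}+\diam(C_m)\to 0$ so that $\{x_n\}$ is Cauchy in the complete space $X$ and $\distH{C_n}{\{x_n\}}\leqslant\diam(C_n)\to 0$, one concludes $\diam(C)=0$. Thus $C=\{x\}$ for some $x\in X$, as claimed.

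I expect the main obstacle to be the reverse chain statement: recognizing that each point of the deeper approximation $C_m$ genuinely descends into $C_n$ under the bonding maps, so that $C_m\subset\bola{C_n}{\overline{\varepsilon_n}}$ holds, rather than merely that $C_n$ is the image of $C_m$. This requires using the explicit union description $C_{k-1}=\bigcup_{a\in C_k}A_{k-1}(a)$ at every level to guarantee that the points built in the downward chain stay inside the sets $C_k$; once that bookkeeping is set up, the geometric estimate is supplied directly by the Proposition.
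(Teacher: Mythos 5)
Your proposal is correct and follows essentially the same route as the paper's proof: the key estimate $\distH{C_n}{C_m}<\overline{\varepsilon_n}$ obtained by applying the chained-points proposition in both directions, Cauchyness from $\varepsilon_n\to 0$ decreasing, convergence by compactness (hence completeness) of $2^X_H$, and the singleton conclusion from $\diam(C_n)<2\varepsilon_n\to 0$. The only differences are presentational: you make explicit the bookkeeping (the union description $C_n=\bigcup_{a\in C_{n+1}}A_n(a)$ guaranteeing chains in both directions) that the paper leaves implicit, and you offer an elementary alternative to the paper's appeal to continuity of the diameter functional on $2^X_H$.
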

\begin{proof}
First of all, we see that, in terms of the Hausdorff metric, the diference between two elements of the sequence can be bounded in terms of the lower index. Let $\{C_n\}\in\mathcal{X}$ be a point of the inverse limit. Then, the Hausdorff distance between terms of the sequence $C_n$ and $C_m$, with $n<m$, is $\textrm{d}_{\textrm{H}}(C_n,C_m)<\overline{\varepsilon_n}.$ To prove the first condition of the Hausdorff distance, consider $c_n\in C_n$ and $c_m\in C_m$ such that $c_n\in p_{n,m}(\{c_m\})$, and then $\textrm{d}(c_n,c_m)<\overline{\varepsilon_n}$ by the previous lemma. Analogously, for $c_m\in C_m$ we can take $c_n\in p_{n,m}(C_m)$ and the distance satisfies the second condition. 

Now, the sequence of closed sets $\{C_n\}\in\mathcal{X}$ is a Cauchy sequence in $2_H^X$. For any $\varepsilon>0$, it suffices to consider $n_0\in\mathbb{N}$ such that $\varepsilon_{n_0}<\varepsilon$ and then, for every $n,m>n_0$, we have $\textrm{d}_{\textrm{H}}(C_n,C_m)<\varepsilon_{n_0}<\varepsilon.$

It remains to prove that every sequence $\{C_n\}\in\mathcal{X}$ converges to a singleton $\{x\}$ of $X$ in the Hausdorff metric. The sequence is Cauchy in the compact metric (and hence complete) space $2_H^X$, so there exists a unique limit $C\in 2^X$. The diameter of this point of the hyperspace is $$\textrm{diam}(C)=\textrm{diam}(\lim_n C_n)=\lim_n\textrm{diam}(C_n)\leqslant\lim_n 2\varepsilon_n=0$$ because of the continuity of the diameter function regarding to the Hausdorff metric (see \cite{Nhyperspaces}). So $C=\{x\}$, with $x\in X\qedhere$
\end{proof}
\begin{obs}
The meaning of $\{C_n\}\in\mathcal{X}$ converging to a set with only one point $\{x\}\subset X$ is that for every $\varepsilon>0$ there exists $n_0\in\mathbb{N}$, such that, for every $n>n_0$, $\textrm{d}_{\textrm{H}}(\{x\},C_n)<\varepsilon$, i.e., $C_n\subset\textrm{B}(\{x\},\varepsilon)$ and $x\in\textrm{B}(C_n,\varepsilon)$. But, the first condition, meaning $x\in\bigcap_{c\in C_n}\textrm{B}(c,\varepsilon)$, implies the second one, $x\in\bigcup_{c\in C_n}\textrm{B}(c,\varepsilon)$. Henceforth, we will say that $\{C_n\}$ converges to $x$ (written $\{C_n\}\xrightarrow[H]{} x$ or $x=\lim_H \{C_n\}$) for the convergence of $\{C_n\}$ to $\{x\}$ with the Hausdorff metric and we will write $\textrm{d}_{\textrm{H}}(x,C_n)$ for $\textrm{d}_{\textrm{H}}(\{x\},C_n)$, for simplicity.
\end{obs}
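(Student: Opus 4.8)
The plan is to reduce the claimed implication to the trivial set-theoretic fact that, over a non-empty index set, an intersection is contained in the corresponding union. First I would unwind the characterization of the Hausdorff distance recalled just above, applied to the pair $\{x\}$ and $C_n$: the inequality $\textrm{d}_{\textrm{H}}(\{x\},C_n)<\varepsilon$ means precisely $\{x\}\subset\textrm{B}(C_n,\varepsilon)$ together with $C_n\subset\textrm{B}(\{x\},\varepsilon)=\textrm{B}(x,\varepsilon)$. Reading each inclusion pointwise, the condition $C_n\subset\textrm{B}(x,\varepsilon)$ says that $\textrm{d}(c,x)<\varepsilon$ for every $c\in C_n$, i.e. $x\in\textrm{B}(c,\varepsilon)$ for all such $c$, which is exactly $x\in\bigcap_{c\in C_n}\textrm{B}(c,\varepsilon)$; while $\{x\}\subset\textrm{B}(C_n,\varepsilon)$ says that $\textrm{d}(x,c)<\varepsilon$ for at least one $c\in C_n$, i.e. $x\in\bigcup_{c\in C_n}\textrm{B}(c,\varepsilon)$.

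With this dictionary, the asserted implication is immediate. Each term $C_n\in U_{2\varepsilon_n}(A_n)\subset 2^X$ is, by the very definition of the hyperspace, a \emph{non-empty} closed subset of $X$, so I may fix some $c_0\in C_n$. Any $x\in\bigcap_{c\in C_n}\textrm{B}(c,\varepsilon)$ then lies in $\textrm{B}(c_0,\varepsilon)$ in particular, hence in $\bigcup_{c\in C_n}\textrm{B}(c,\varepsilon)$; thus the first condition forces the second. Consequently, to certify $\textrm{d}_{\textrm{H}}(x,C_n)<\varepsilon$ it suffices to verify the single inclusion $C_n\subset\textrm{B}(x,\varepsilon)$, which is what licenses the abbreviated notation $\{C_n\}\xrightarrow[H]{} x$ and the writing of $\textrm{d}_{\textrm{H}}(x,C_n)$ for $\textrm{d}_{\textrm{H}}(\{x\},C_n)$ in the sequel.

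There is no genuine obstacle here; the single point that warrants care is the non-emptiness of $C_n$. Were $C_n$ empty, the intersection $\bigcap_{c\in C_n}\textrm{B}(c,\varepsilon)$ would, by the usual convention, be all of $X$, whereas the union would be empty, so the implication would fail. Since the hyperspace $2^X$ is defined to consist only of non-empty closed sets, this degenerate case never arises for any term of any sequence in the inverse limit $\mathcal{X}$, and the argument applies uniformly.
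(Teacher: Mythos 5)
Your proposal is correct and follows essentially the same route as the paper, which treats the implication as the trivial fact that an intersection over a non-empty index set is contained in the corresponding union, after unwinding the two inclusions defining $\textrm{d}_{\textrm{H}}(\{x\},C_n)<\varepsilon$ exactly as you do. Your explicit remark that non-emptiness of $C_n$ (guaranteed by the definition of the hyperspace $2^X$) is the one point requiring care is a sound addition but does not change the argument.
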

We have the following trivial facts relating the Hausdorff distance on the hyperspace of a metric space and the original distance on the space, for distances between points and closed sets.
\begin{prop}
Let $X$ be a metric space, for every pair of points $x,y\in X$ and pair of closed subsets $D\subset C\subset X$, we have:
\begin{itemize}
\item [i)]$\textrm{d}_{\textrm{H}}(x,y)=\textrm{d}(x,y)$.
\item [ii)]$\textrm{d}_{\textrm{H}}(x,C)=\textrm{sup}\left\lbrace\textrm{d}(x,c) : c\in C\right\rbrace
\geqslant\textrm{inf}\left\lbrace\textrm{d}(x,c) : c\in C\right\rbrace=d(x,C)$.
\item [iii)]$\textrm{d}_{\textrm{H}}(x,D)\leqslant\textrm{d}_{\textrm{H}}(x,C)$ but $\textrm{d}(x,D)\geqslant\textrm{d}(x,C)$.
\end{itemize}
\end{prop}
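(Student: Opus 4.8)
The plan is to derive all three items directly from the definition $\textrm{d}_{\textrm{H}}(C,D)=\inf\conjunto{\varepsilon>0: C\subset D_\varepsilon,\ D\subset C_\varepsilon}$, using the two elementary observations that for a singleton the generalized ball is an ordinary ball, $\conjunto{x}_\varepsilon=\textrm{B}(x,\varepsilon)$, and that $C_\varepsilon=\bigcup_{c\in C}\textrm{B}(c,\varepsilon)$. Throughout I identify a point $x$ with the closed singleton $\conjunto{x}\in 2^X$, so that $\textrm{d}_{\textrm{H}}(x,\cdot)$ makes sense.

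For item i) I would simply note that the two containment conditions defining $\textrm{d}_{\textrm{H}}(\conjunto{x},\conjunto{y})$, namely $\conjunto{x}\subset\textrm{B}(y,\varepsilon)$ and $\conjunto{y}\subset\textrm{B}(x,\varepsilon)$, both collapse to the single inequality $\textrm{d}(x,y)<\varepsilon$; taking the infimum over such $\varepsilon$ yields exactly $\textrm{d}(x,y)$.

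For item ii) I would write $\textrm{d}_{\textrm{H}}(\conjunto{x},C)=\inf\conjunto{\varepsilon>0:\conjunto{x}\subset C_\varepsilon,\ C\subset\textrm{B}(x,\varepsilon)}$. The first condition says $\textrm{d}(x,C)<\varepsilon$ and the second says $\textrm{d}(x,c)<\varepsilon$ for all $c\in C$, that is, $\sup_{c\in C}\textrm{d}(x,c)\leqslant\varepsilon$. Since $\textrm{d}(x,C)=\inf_c\textrm{d}(x,c)\leqslant\sup_c\textrm{d}(x,c)$, the second condition is the binding one, so the infimum equals $\sup_{c\in C}\textrm{d}(x,c)$; the displayed inequality against $\textrm{d}(x,C)$ is then just ``supremum dominates infimum'' for a nonempty set of reals. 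Finally, item iii) follows from monotonicity of $\sup$ and $\inf$ under $D\subset C$: using ii), $\textrm{d}_{\textrm{H}}(x,D)=\sup_{d\in D}\textrm{d}(x,d)\leqslant\sup_{c\in C}\textrm{d}(x,c)=\textrm{d}_{\textrm{H}}(x,C)$ because the supremum is taken over a smaller set, while dually $\textrm{d}(x,D)=\inf_{d\in D}\textrm{d}(x,d)\geqslant\inf_{c\in C}\textrm{d}(x,c)=\textrm{d}(x,C)$ because the infimum is taken over a smaller set.

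The only step needing any care, and the one I would single out as the sole subtlety in an otherwise routine unwinding of definitions, is reconciling the strict inequality built into the generalized ball with the possibly attained supremum in ii): the condition $C\subset\textrm{B}(x,\varepsilon)$ demands $\textrm{d}(x,c)<\varepsilon$ for every $c$, which can fail at $\varepsilon=\sup_c\textrm{d}(x,c)$ when that supremum is attained, yet holds for all strictly larger $\varepsilon$; hence the infimum over admissible $\varepsilon$ is precisely $\sup_c\textrm{d}(x,c)$, as claimed.
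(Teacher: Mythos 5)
Your proof is correct, and it is the natural argument: the paper states this proposition as a list of ``trivial facts'' and offers no proof at all, so your careful unwinding of the definition of $\textrm{d}_{\textrm{H}}$ (including the observation that the containment $C\subset\textrm{B}(x,\varepsilon)$ is the binding condition in ii), and the remark about the strict inequality versus an attained supremum) supplies exactly the routine verification the paper leaves implicit.
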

The last property can be interpreted in some sense as a better behaviour of the Hausdorff distance with respect to the upper semifinite topology.
\begin{obs}\label{distancialimite}
We can even bound the distances to the limit. If $\{C_n\}\in\mathcal{X}$ is a point of the inverse limit converging to a point $x\in X$ in the Hausdorff metric, then, for every $n\in\mathbb{N}$, $\textrm{d}_{\textrm{H}}(x,C_n)<\varepsilon_n$. This is so because, if we consider an $m>n$ such that $\textrm{d}_{\textrm{H}}(x,C_m)<\underline{\varepsilon_n}$, then we can write $$\textrm{d}_{\textrm{H}}(x,C_n)\leqslant\textrm{d}_{\textrm{H}}(x,C_m)+\textrm{d}_{\textrm{H}}(C_m,C_n)<\underline{\varepsilon_n}+\overline{\varepsilon_n}=\varepsilon_n.$$
This measure allows us to understand this sequences from another point of view: If $\{C_n\}\in\mathcal{X}$ is such a sequence, then we know there exists an $x\in X$ such that $\{C_n\}$ converges to $\{x\}$ in the Hausdorff metric. But, from the previous remark, we see that, for every $n\in\mathbb{N}$, $x\in\bigcap_{c\in C_n}\textrm{B}(c,\varepsilon_n)$. So, we can see $x$ as the infinite intersection over all natural numbers: $$x=\bigcap_{n\in\mathbb{N}}\left(\bigcap_{c\in C_n}\textrm{B}(c,\varepsilon_n)\right).$$
\end{obs}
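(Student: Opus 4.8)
The statement bundles two assertions: the metric estimate $\textrm{d}_{\textrm{H}}(x,C_n)<\varepsilon_n$ valid for every $n$, and the description of the limit point $x$ as the nested intersection $\bigcap_{n}\bigcap_{c\in C_n}\textrm{B}(c,\varepsilon_n)$. The plan is to secure the estimate first and then convert it into the set-theoretic identity. For the estimate I would fix $n$ and use that $\{C_n\}$ is already known (from the preceding proposition) to converge in $2^X_H$ to $\{x\}$. Since $\underline{\varepsilon_n}=\frac{\varepsilon_n-\gamma_n}{2}$ is strictly positive (because $\gamma_n<\varepsilon_n$), convergence allows me to choose an index $m>n$ with $\textrm{d}_{\textrm{H}}(x,C_m)<\underline{\varepsilon_n}$. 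Feeding this together with the bound $\textrm{d}_{\textrm{H}}(C_n,C_m)<\overline{\varepsilon_n}$ established in that same proposition into the triangle inequality in the complete space $2^X_H$ yields
$$\textrm{d}_{\textrm{H}}(x,C_n)\leqslant\textrm{d}_{\textrm{H}}(x,C_m)+\textrm{d}_{\textrm{H}}(C_m,C_n)<\underline{\varepsilon_n}+\overline{\varepsilon_n}=\varepsilon_n,$$
which is the first claim. The only thing to verify here is the legitimacy of the choice of $m$, i.e. that $\underline{\varepsilon_n}>0$, which is immediate from $\gamma_n<\varepsilon_n$.

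Next I would unwind this estimate into a membership statement. By the characterization of the Hausdorff distance between a singleton and a closed set, the inequality $\textrm{d}_{\textrm{H}}(\{x\},C_n)<\varepsilon_n$ forces the containment $C_n\subset\textrm{B}(\{x\},\varepsilon_n)$; equivalently, every $c\in C_n$ satisfies $\textrm{d}(x,c)<\varepsilon_n$, that is $x\in\textrm{B}(c,\varepsilon_n)$ for all $c\in C_n$, so $x\in\bigcap_{c\in C_n}\textrm{B}(c,\varepsilon_n)$. As this holds for each $n$, taking the intersection over $n$ gives the forward inclusion $x\in\bigcap_{n\in\mathbb{N}}\bigcap_{c\in C_n}\textrm{B}(c,\varepsilon_n)$.

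Finally, for the reverse inclusion — the step that actually pins the intersection down to a single point — I would take an arbitrary $y$ in the intersection and show $y=x$. For each $n$ pick any $c\in C_n$ (legitimate, since the elements of the hyperspace are non-empty); then $\textrm{d}(y,c)<\varepsilon_n$ because $y$ lies in the intersection, while $\textrm{d}(x,c)<\varepsilon_n$ by the previous paragraph, so the triangle inequality gives $\textrm{d}(x,y)<2\varepsilon_n$ for every $n$. Since $\{\varepsilon_n\}$ decreases to zero (indeed $\varepsilon_n<\frac{\varepsilon_1}{2^{n-1}}$ by the earlier lemma), this forces $\textrm{d}(x,y)=0$, hence $y=x$, and the two inclusions together give the asserted equality. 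The computations are all short; the one point demanding care is the translation from the Hausdorff estimate to the two-sided inclusion, where one must keep straight that $\textrm{d}_{\textrm{H}}(\{x\},C_n)<\varepsilon_n$ encodes the strong containment $C_n\subset\textrm{B}(\{x\},\varepsilon_n)$, equivalently $x\in\bigcap_{c\in C_n}\textrm{B}(c,\varepsilon_n)$, rather than merely the weaker $x\in\textrm{B}(C_n,\varepsilon_n)=\bigcup_{c\in C_n}\textrm{B}(c,\varepsilon_n)$.
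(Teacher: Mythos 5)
Your proof is correct and follows essentially the same route as the paper: you choose $m>n$ with $\textrm{d}_{\textrm{H}}(x,C_m)<\underline{\varepsilon_n}$ using the already-established convergence, combine it with the bound $\textrm{d}_{\textrm{H}}(C_n,C_m)<\overline{\varepsilon_n}$ from the preceding proposition via the triangle inequality, and translate $\textrm{d}_{\textrm{H}}(x,C_n)<\varepsilon_n$ into the containment $C_n\subset\textrm{B}(x,\varepsilon_n)$, i.e.\ $x\in\bigcap_{c\in C_n}\textrm{B}(c,\varepsilon_n)$, exactly as the remark does. Your explicit reverse-inclusion argument (any $y$ in the full intersection satisfies $\textrm{d}(x,y)<2\varepsilon_n$ for all $n$, hence $y=x$ since $\varepsilon_n\rightarrow 0$) merely makes rigorous a step the paper asserts without proof, and is a sound completion rather than a different approach.
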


\begin{proof}[Proof of Theorem \ref{teo:limitefinito}]
Now, we can define a map $\varphi:\mathcal{X}\rightarrow X$ from the inverse limit $\mathcal{X}$ to the original space $X$. We do this assigning to every sequence $\{C_n\}\in\mathcal{X}$ the unique point $x$ in the limit $x=\lim_H\{C_n\}$. The map $\varphi:\mathcal{X}\rightarrow X$, sending $\{C_n\}$ to $x$ is continuous. Let $\{C_n\}\in\mathcal{X}$ such that $x=\lim_H\{C_n\}$. Then, consider a neighborhood $U$ of $x$ inside $X$. Now we want to find a neighborhood of $\{C_n\}$ in $\mathcal{X}$ with image contained in $U$. There exists an $\varepsilon>0$ such that $x\in\textrm{B}(x,\varepsilon)\subset U$.  Let us consider $n_0\in\mathbb{N}$ such that, for every $n\geqslant n_0$, $\varepsilon_n<\frac{\varepsilon}{2}$. We claim that the basic open neighborhood of the inverse limit $\mathcal{X}$, $$V=\left(2^{C_1}\times 2^{C_2}\times\ldots\times2^{C_{n_0}}\times U_{2\varepsilon_{n_0+1}}(A_{n_0+1})\times\ldots\right)\cap\mathcal{X}$$ is the desired neighborhood of $\{C_n\}$ in $\mathcal{X}$. So, let $\{D_n\}\in V$ with $\{D_n\}\xrightarrow[H]{} y$. Then we have $$\textrm{d}_{\textrm{H}}(x,y)\leqslant\textrm{d}_{\textrm{H}}(x,D_{n_0})+\textrm{d}_{\textrm{H}}(D_{n_0},y)\leqslant\textrm{d}_{\textrm{H}}(x,C_{n_0})+\textrm{d}_{\textrm{H}}(D_{n_0},y)<2\varepsilon_{n_0},$$ so $y=\varphi(\{D_n\})\in\textrm{B}(x,\varepsilon)\subset U$.

Moreover, the map $\varphi:\mathcal{X}\rightarrow X$ is surjective. For every $x\in X$, we shall construct an element of the inverse limit explicitly. To do so, let $x\in X$ and consider, for every $n\in\mathbb{N}$, the sets $X^n=\textrm{B}(x,\varepsilon_n)\cap A_n$. These sets are finite and non-empty, because, for every $n\in\mathbb{N}$, $A_n$ is a finite  $\varepsilon_n$-approximation. Now we define, for every $n\in\mathbb{N}$, $$X_n^*=\bigcap_{m>n}p_{n,m}(X^m),$$   which are non-empty sets, as an intersection of a nested collection of finite (hence closed) sets in a compact space. To show that it is indeed a nested sequence, we need to prove that, for every $x\in X$ and $n<m$, $p_{n,m+1}(X^{m+1})\subset p_{n,m}(X^m)$. We first show that, for every $m\in\mathbb{N}$, $p_{m,m+1}(X^{m+1})\subset X^m$. Let $d\in p_{m,m+1}(X^{m+1})$. There is an element $c\in X^{m+1}$ such that $d\in p_{m,m+1}(\{c\})$, so $\textrm{d}(x,c)<\varepsilon_{m+1}$ and $\textrm{d}(c,d)<\overline{\varepsilon_{m}}$ and we get $$\textrm{d}(x,d)\leqslant\textrm{d}(x,c)+\textrm{d}(c,d)<\underline{\varepsilon_{m}}+\overline{\varepsilon_m}=\varepsilon_m,$$ meaning that $d\in X^m$. Now, it follows directly that $$p_{n,m+1}(X^{m+1})=p_{n,m}(p_{m,m+1}(X^{m+1}))\subset p_{n,m}(X^m).$$ The sequence $X^*=\{X_n^*\}$ is an element of the inverse limit $\mathcal{X}$. This is so because, for every $n\in\mathbb{N}$, $\textrm{diam}X_n^*<2\varepsilon_n$ (by construction, $X_n^*\subset X^n$) and, for every pair $n<m$, we have $p_{n,m}(X_m^*)=X_n^*$. We just need to prove it for two consecutive terms, i.e., we want to prove that, for every $\todon$, $p_{n,n+1}(X_{n+1}^*)=X_n^*$, and the result follows inductively. The last assertion relies on the following fact\footnote{This is a kind of Mittag-Leffer property for these elements of the inverse limit.}: For every $n\in\mathbb{N}$ there exist an integer $*(n)>n$ such that, for every $m\geqslant *(n)$, $X_n^*=p_{n,m}(X^m)$. The proof goes by construction. For every $z\in X^n\backslash X_n^*$ there exists $n_z\in\mathbb{N}$ such that, for every $m\geqslant n_z$, $z\notin p_{n,m}(X^m)$. Being $X^n$ a finite set, consider $$*(n)=\max\conjunto{n_z:z\in X^n\backslash X_n^*}=\min\conjunto{m\in\mathbb{N}:p_{n,m}(X^m)=X_n^*},$$ and we have the desired result. The function $*:\mathbb{N}\rightarrow\mathbb{N}$ is an increasing function. Considering any $m\geqslant*(n+1)$ is elementary to see that  $$p_{n,n+1}(X_{n+1}^*)=p_{n,n+1}(p_{n+1,m}(X^m))=p_{n,m}(X^{m})=X_n^*,$$ as wanted. We claim that $\varphi(X^*)=x$. For every $\varepsilon>0$, consider $n_0$ such that $\varepsilon_{n_0}<\varepsilon$. Then, for every $n>n_0$, we have that $\textrm{d}_{\textrm{H}}(x,X_n^*)<\varepsilon_n<\varepsilon$, because, for every $x^*\in X_n^*$, $\textrm{d}(x,x^*)<\varepsilon_n$, and then, $X_n^*\subset\textrm{B}(x,\varepsilon_n)$ and $x\in\textrm{B}(X_n^*,\varepsilon_n)$.


The proof of the surjectivity gives us an important element of the inverse limit related with each $x\in X$. By construction, this element of the inverse limit is maximal in the following sense: For every $\{C_n\}\in\mathcal{X}$, such that $x=\varphi(\{C_n\})$, we have that $C_n\subset X_n^*$, for every $n\in\mathbb{N}$. Indeed, for every $m\in\mathbb{N}$, $\textrm{d}_{\textrm{H}}(x,C_m)<\varepsilon_m$ so $C_m\subset\textrm{B}(x,\varepsilon_m)\subset X^m$. Now, given $\todon$, for every $m>*(n)$, $C_n=p_{n,m}(C_m)\subset p_{n,m}(X^m)=X_n^*$. Actually, we can alternatively define $X_n^*$ just with this property as $$X_n^*=\bigcup_{\{C_n\}\in\varphi^{-1}(x)}C_n,$$ because of the maximal property and that $\varphi(\{X_n^*\})=x$.

The previous construction allows us to define a map on the other direction, $\phi:X\rightarrow\mathcal{X}$ with $\phi(x)=\{X_n^*\}$. To prove that this map is continuous in every point, let us consider a neighborhood $V$ of $\{X_n^*\}$ in $\mathcal{X}$. We know that there exists a neighborhood of the form $$W=\left(2^{X_1^*}\times 2^{X_2^*}\times\ldots\times 2^{X_r^*}\times U_{2\varepsilon_{r+1}}(A_{r+1})\times\ldots\right)\cap\mathcal{X}$$ such that $W\subset V$. We need to find points close enought to $x$, that is, an open neighborhood $U\subset X$ such that $\phi(U)\subset W$. We do this by the following construction. First of all, consider $s=*(r)$. We use the following notation, not to be confused with the usual topological notation:
\begin{align*}
\overline{X^s}&:=\overline{\textrm{B}}(x,\varepsilon_s)\cap A_s\enspace(\textrm{where}\enspace\overline{\textrm{B}}(x,\varepsilon_s)=\conjunto{y\in X:\textrm{d}(x,y)\leqslant\varepsilon_s}),\\
\partial X^s&:=\left(\overline{\textrm{B}}(x,\varepsilon_s)\setminus\textrm{B}(x,\varepsilon_s)\right)\cap A_s,\\
X_{\delta}^s&:=\textrm{B}(x,\varepsilon_s+\delta),\enspace\textrm{for}\enspace\delta\in(-\varepsilon_s,\infty).
\end{align*}
Let us consider the distance from $x$ to the closest point of $A_s$ that is not in $\overline{X^s}$, $$\varepsilon_s^+(x)=\min\conjunto{\textrm{d}(x,a):a\in A_s\setminus\overline{X^s}}=\textrm{d}(x,A_s\setminus\overline{X^s})>\varepsilon_s.$$ If there is not such a point, the proof is easier, just consider $\varepsilon_s^+(x)=2\varepsilon_s$. 
In general, we claim the following (see figure \ref{stretch}): 
\begin{figure}[h]
\begin{center}
\includegraphics[scale=1]{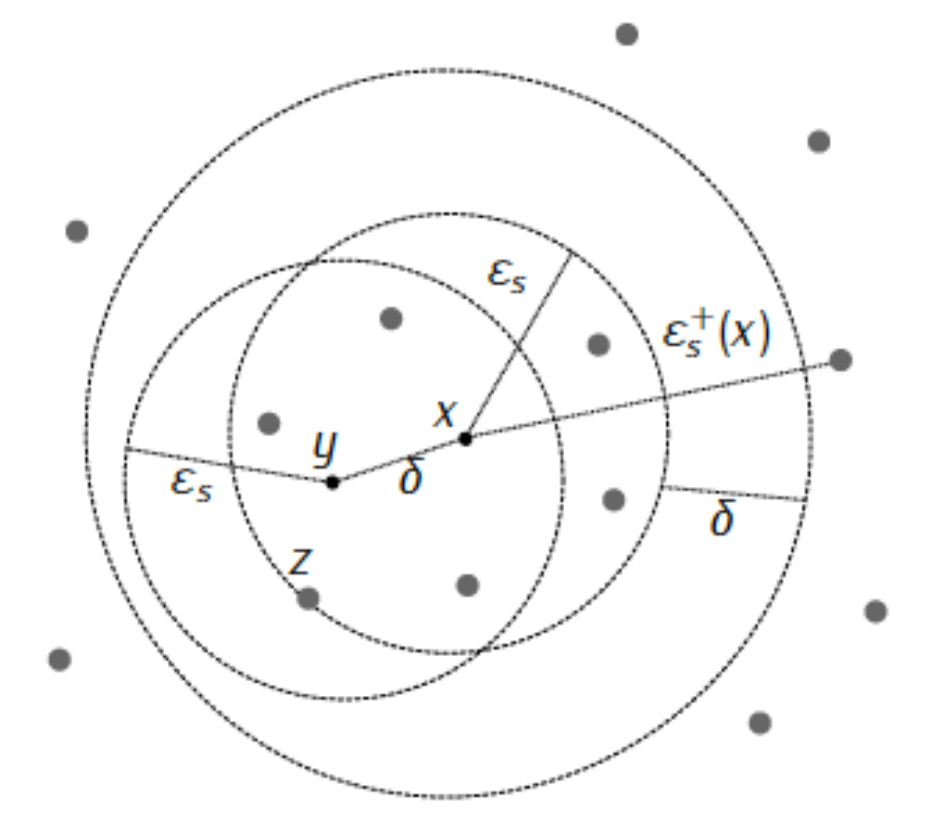}
\caption{For points $y$, close enough to $x$, we do not add exterior points of $X^s$, when we consider its $\varepsilon_s$-neighborhoods. Possibly, some points of the boundary $z\in\partial X^s$ are included, but they are not the image of any point in the next step.}
\label{stretch}
\end{center}
\end{figure}
\begin{itemize}
\item [$i)$] For every $\delta<\varepsilon_s^+(x)-\varepsilon_s$, $\overline{X^s}=X^s_{\delta}$: If $c\in X_{\delta}^s$ then $\textrm{d}(x,c)<\varepsilon_s+\delta<\varepsilon_s^+(x)$, so $c\in\overline{X^s}$.
\item [$ii)$] For every $\delta<\underline{\varepsilon_s}$ we have that, for every $y\in\textrm{B}(x,\delta)$, $p_{s,s+1}(Y^{s+1})\subset Y^s\setminus\partial X^s$: Consider $z\in\partial X^s$ and $b\in Y^{s+1}$. Then, $$\varepsilon_s=\textrm{d}(x,z)\leqslant\textrm{d}(x,y)+\textrm{d}(y,b)+\textrm{d}(b,z)<2\underline{\varepsilon_s}+\textrm{d}(b,z),$$
so $\textrm{d}(b,z)>\gamma_s$, and then $z\notin p_{s,s+1}(Y_{s+1})$. That means $p_{s,s+1}(Y^{s+1})\cap\partial X^s=\emptyset$, hence $p_{s,s+1}(Y^{s+1})\subset Y^s\setminus\partial X^s$.
\end{itemize}
The desired neighborhood of $x$ is $U=\textrm{B}(x,\delta)$ with $$\delta<\min\conjunto{\varepsilon_s^+(x)-\varepsilon_s,\underline{\varepsilon_s}}.$$

For $y\in\textrm{B}(x,\delta)$, we have that $Y^s\subset X^s_{\delta}=\overline{X^s}$ and that $$Y^*_r\subset p_{r,s+1}(Y^{s+1})=p_{r,s}(p_{s,s+1}(Y^{s+1}))\subset p_{r,s}(Y^s\setminus\partial X^s)\subset p_{r,s}(X^s)=X^*_r.$$ For $n<r$, $Y_n^*=p_{n,r}(Y_r^*)\subset p_{n,r}(X_r^*)=X_n^*$. Then $\{Y_n^*\}\subset W$ and hence the map $\phi:X\rightarrow\mathcal{X}$ is continuous. 

This map is clearly injective. Suppose we have two diferent points $x,y$ of $X$. Then, they are at distance, let us say, $\varepsilon=\textrm{d}(x,y)$. Consider $s\in\mathbb{N}$ such that $\varepsilon_s<\frac{\epsilon}{2}$. Then, for every $n>s$, we have that $\textrm{B}(x,\varepsilon_n)\cap\textrm{B}(y,\varepsilon_n)=\emptyset$, that is $X_n\cap Y_n=\emptyset$. So, necesarilly, we have that $X_n^*\cap Y_n^*=\emptyset$, which implies that $\{X^*_n\}\neq \{Y^*_n\}$, being the map injective.

If we consider the restriction to the image $\mathcal{X}^*=\phi(X)$, then $\phi:X\rightarrow\mathcal{X}^*$ is bijective.
But, it is easy to see that $\mathcal{X}^*$ is Hausdorff. If we consider two different points $\{X^*_n\},\{Y^*_n\}\in\mathcal{X}^*$, then there exist $x\neq y$ such that $\{X^*_n\}=\phi(x)$ and $\{Y^*\}=\phi(y)$. Repeating the last proof, we obtain an $s\in\mathbb{N}$ such that, for every $n>s$ we have $X_n^*\cap Y_n^*=\emptyset$. So, we claim that the neighborhoods $$\left(2^{X_1^*}\times\ldots\times2^{X_s^*}\times2^{X_{s+1}^*}\times U_{2\varepsilon_{s+2}}(A_{s+2})\times\ldots\right)\cap\mathcal{X}^*$$ and $$\left(2^{Y_1^*}\times\ldots\times2^{Y_s^*}\times2^{Y_{s+1}^*}\times U_{2\varepsilon_{s+2}}(A_{s+2})\times\ldots\right)\cap\mathcal{X}^*$$ of $\{X_n^*\}$ and $\{Y_n^*\}$ respectively in $\mathcal{X}^*$, are disjoint. Hence $\mathcal{X}^*$ is Hausdorff. Then, as a bijective and continuous map between a compact Hausdorff space and a Hausdorff space, we get that the map  $\phi:X\rightarrow\mathcal{X}^*$ is a homeomorphism.

So, we have that $\mathcal{X}^*$ is a homeomorphic copy of $X$ inside $\mathcal{X}$. Now, we will see that $\mathcal{X}^*$ is a strong deformation retract of $\mathcal{X}$. To do so, we consider the compositions of the maps defined above. It is very easy to see that $\varphi\cdot\phi:X\rightarrow X$ is the identity map. It is not that easy to see that the map $\phi\cdot\varphi:\mathcal{X}\rightarrow\mathcal{X}$ is homotopic to the identity $1_{\mathcal{X}}$. We will write the homotopy explicitly: It is the map $H:\mathcal{X}\times[0,1]\rightarrow\mathcal{X}$ given by
\begin{equation*}
H(\{C_n\},t) = \left\{
\begin{array}{lll}
\{C_n\} & \text{if} & t\in[0,1),\\
\phi\cdot\varphi(\{C_n\}) & \text{if} & t=1.
\end{array}\right.
\end{equation*}
We only need to show the continuity at the points $(\{C_n\},1)\in\mathcal{X}\times[0,1]$. Let us write $\phi\cdot\varphi(\{C_n\})=\phi(x)=\{X^*_n\}$. Consider any neighborhhod $V$ of $\{X^*_n\}$ in $\mathcal{X}$. We can obtain a neighborhood of $\{X^*_n\}$ of the form $$W=\left(2^{X_1^*}\times\ldots\times2^{X^*_{r}}\times U_{2\varepsilon_{r+1}}(A_{r+1})\times\ldots\right)\cap\mathcal{X}$$ such that $W\subset V$. As we have done in a previous proof, we consider $s=*(r)$, $\varepsilon_s^+(x)=\textrm{d}(x,A_s\setminus\overline{X^s})$, and $\delta<\min\conjunto{\varepsilon_s^+(x)-\varepsilon_s,\underline{\varepsilon_s}}$. Select $t>s$ such that $\varepsilon_t<\frac{\delta}{2}$. We claim that the neighborhood $$U=\left(2^{C_1}\times 2^{C_2}\times\ldots\times2^{C_t}\times U_{2\varepsilon_{t+1}}(A_{t+1})\times\ldots\right)\cap\mathcal{X}$$ of $(\{C_n\},1)$ in $\mathcal{X}\times[0,1]$ satisfies $H(U\times[0,1])\subset W$. Let $(\{D_n\},t)\in U\times[0,1]$ where $D_n\subset C_n$ for $n=1,\ldots, t$. Then, if $t<1$, $H(\{D_n\},t)=\{D_n\}\subset W$, because $r<s<t$ and $D_n\subset C_n\subset X_n^*$ for $n=1,\ldots,s$. On the other hand, if $t=1$, then $H(\{D_n\},1)=\phi\cdot\varphi(\{D_n\})=\phi(y)=\{Y^*_n\}$. This implies that $\{Y^*_n\}\in W$. To see why, first observe that, for every $d\in D_t\subset C_t\subset X^*_t$, $\textrm{d}(x,y)\leqslant\textrm{d}(x,d)+\textrm{d}(d,y)<2\varepsilon_t<\delta$. Then, again as before, $Y^t\subset X^t\cup\partial X^t$ and $Y_r^*\subset X_r^*$, so $\{Y^*_n\}\in W$, and the homotopy is then continuous. The space $\mathcal{X}^*$ is a strong deformation retract of $\mathcal{X}$ and the proof of the theorem is finished$\qedhere$
\end{proof}

This theorem leads us easily to the following consequence.
\begin{cor}\label{teo:homotopiafinitos}
For every compact metric space, there exists an inverse sequence of finite spaces whose inverse limit has the same homotopy type.
\end{cor}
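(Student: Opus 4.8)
The plan is to read this off Theorem \ref{teo:limitefinito} with almost no extra work, since all the genuine content already lives in that theorem. First I would invoke the Main Construction (Theorem \ref{teo:mainconstruction}) to produce, for the given compact metric space $X$, a \textsc{fas} $\{\U{n},p_{n,n+1}\}$. By the construction recalled in Section \ref{mainconstruction}, each term $\U{n}$ is a \emph{finite} $\textrm{T}_0$ space, being a finite subspace of the hyperspace $2^{A_n}_u$ ordered by $\subset$; thus $\{\U{n},p_{n,n+1}\}$ is a genuine inverse sequence of finite spaces. I would write $\mathcal{X}$ for its inverse limit.

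Next I would apply Theorem \ref{teo:limitefinito} to this \textsc{fas}. It supplies a subspace $\mathcal{X}^{*}\subset\mathcal{X}$ that is homeomorphic to $X$ and is a strong deformation retract of $\mathcal{X}$. The remaining argument is purely formal: a strong deformation retraction is in particular a homotopy equivalence, so $\mathcal{X}$ and $\mathcal{X}^{*}$ have the same homotopy type; and homeomorphic spaces share a homotopy type, so from $\mathcal{X}^{*}\cong X$ we conclude that $\mathcal{X}$ has the homotopy type of $X$.

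Combining these observations, the inverse sequence of finite spaces $\{\U{n},p_{n,n+1}\}$ delivered by the Main Construction is precisely the sequence the statement asks for: its inverse limit $\mathcal{X}$ has the same homotopy type as $X$. I do not expect any real obstacle in this argument, as the difficulty is entirely absorbed into the proof of Theorem \ref{teo:limitefinito}; the only points requiring care are the bookkeeping that the terms $\U{n}$ are indeed finite spaces and the standard fact that a strong deformation retract is homotopy equivalent to its ambient space.
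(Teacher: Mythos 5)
Your proposal is correct and is exactly the argument the paper intends: the corollary is stated as an immediate consequence of Theorem \ref{teo:limitefinito}, obtained by applying it to a \textsc{fas} supplied by Theorem \ref{teo:mainconstruction}, whose terms $\U{n}$ are finite since each $A_n$ is finite, and then using the formal facts that a strong deformation retract is a homotopy equivalence and that homeomorphic spaces have the same homotopy type. There is no gap and no divergence from the paper's route.
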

This means that the homotopy type of compact metric spaces is, in some sense, pro-finite.
\subsection{Example}\label{ej:intervalotriadico}
Let $X=[0,1]$ be the unit interval with the usual metric on the real line $\textrm{d}$. In order to perform the Main Construction, we are going to use subdivisions of the unit interval in powers of $\frac{1}{3}$. The conditions of the construction will force us to take, for each subdivision, an small enough subdivision for the next step. The diameter of $X$ is $M=1$. Let us select $\varepsilon_1=2>M$ and $A_1=\{0\}$. Obviously $U_{2\varepsilon_1}(A_1)=\{0\}$. Then, it is easy to see that $\gamma_1=1$. 

For the next step, consider $$\varepsilon_2=\frac{1}{3}<\frac{\varepsilon_1-\gamma_1}{2}=\frac{1}{2}$$ and $A_2=\{\frac{k}{3}, k=0,\ldots,3\}$. Then $$U_{2\epsilon_2}(A_2)=A_2\cup\left\lbrace\left\lbrace\frac{k}{3},\frac{k+1}{3}\right\rbrace,k=0,\ldots,2\right\rbrace,$$ because, for $k<k^{\prime}$, we have $\textrm{d}(\frac{k}{3},\frac{k^{\prime}}{3})<\frac{2}{3}$ if and only if $k^{\prime}-k<2$, that is, $k^{\prime}-k$ is either $0$ or $1$. The largest distance of a point of $X$ to a point of the approximation $A_2$ is reached when the point lies exactly in the middle of two consecutive points of $A_2$, hence $\gamma_2=\frac{1}{6}$. 

We pick  $$\varepsilon_3=\frac{1}{3^3}<\frac{\varepsilon_2-\gamma_2}{2},\frac{\delta_2}{2}=\frac{1}{3\cdot2^2}$$ and $$A_3=\left\lbrace\frac{k}{3^3},\enspace k=0,\ldots,3^3\right\rbrace$$ for the new approximation. Then $$U_{2\varepsilon_3}(A_3)=A_3\cup\left\lbrace\left\lbrace\frac{k}{3^3},\frac{k+1}{3^3}\right\rbrace,\enspace k=0,\ldots,3^3-1\right\rbrace,$$ because, for $k<k^{\prime}$ we have $\textrm{d}(\frac{k}{3^3},\frac{k^{\prime}}{3^3})<\frac{2}{3^3}$ if and only if $k-k^{\prime}<2$, that is $k$ and $k^{\prime}$ are the same or consecutive integers. Now, $\gamma_3=\frac{1}{2\cdot3^3}$ (the middle of interval argument holds again). 

For the next approximation, we would select $$\varepsilon_4=\frac{1}{3^5}<\frac{\varepsilon_3-\gamma_3}{2}=\frac{1}{3^3\cdot2^2}.$$

Following this process, we can take, for an arbitrary $n\in\mathbb{N}$, $\varepsilon_n=\frac{1}{3^{2n-3}}$ and $$A_n=\left\lbrace\frac{k}{3^{2n-3}},\enspace k=0,\ldots,3^{2^n-3}\right\rbrace$$ is an $\varepsilon_n$-approximation of $[0,1]$. Observe that $$U_{2\varepsilon_n}(A_n)=A_n\cup\left\lbrace\left\lbrace\frac{k}{3^{2n-3}},\frac{k+1}{3^{2n-3}}\right\rbrace,\enspace k=0,\ldots,3^{2n-3}-1\right\rbrace.$$ We can calculate, as in the previous steps, the numbers required to continue the process. The maximum distance of a point of the unit interval to one of the approximation is reached in the middle of any interval formed by two consecutive points of the approximation, so $\gamma_n=\frac{1}{2\cdot3^{2n-3}}$. 

Next step will consist of taking $$\varepsilon_{n+1}<\frac{1}{2^2\cdot3^{2n-3}}.$$
So we are allowed to choose $\varepsilon_{n+1}=\frac{1}{3^{2n-3+2}}=\frac{1}{3^{2(n+1)-3}}$ and $$A_{n+1}=\left\lbrace\frac{k}{3^{2(n+1)-3}},\enspace k=0,\ldots,3^{2(n+1)-3}\right\rbrace$$ is an $\varepsilon_{n+1}$-approximation of $[0,1]$ and then this construction can be done in this way for every $n\in\mathbb{N}$ obtaining an explicit \textsc{fas} for the space $X$.

Considering this construction done, we analyze some facts about the proof of the main theorem in this example. First, observe that there are points $x\in X$ of the interval with only one point in the preimage by the map $\varphi$, i.e., there is only one point of the inverse limit $\mathcal{X}$ converging to $x$ in the Hausdorff metric. To clarify this, let us see what happens at $x=0\in[0,1]$. It is obvious that the point $\left(0,0,\ldots\right)\in\mathcal{X}$ converges to $0$ in the Hausdorff metric. If we want a different element of the inverse limit converging to $0$, it is natural to think that we could use the fact that $\lim_{n\rightarrow\infty}\frac{1}{3^{2n-3}}=0$ to obtain it, but it turns out that $$\left(0,\frac{1}{3},\frac{1}{3^3},\ldots,\frac{1}{3^{2n-3}},\frac{1}{3^{2(n+1)-3}},\ldots\right)\notin\mathcal{X}$$ because, for every $n\in\mathbb{N}$, $p_{n,n+1}\left(\frac{1}{3^{2(n+1)-3}}\right)=0$. If we try to construct the "maximal" element $X^*$ of the inverse limit with image $x=0$, we obtain that, for every $n\in\mathbb{N}$, $$X_n=\textrm{B}(0,\varepsilon_n)\cap A_n=\{0\}$$ and $p_{n,m}(0)=0$, for every $n<m$. Hence, $$X_n^*=\bigcap_{n<m}p_{n,m}(X_m)=0,$$ for every $n\in\mathbb{N}$, and then, $X^*=\left(0,0,\ldots\right)$. Henceforth, every element of $\mathcal{X}$ converging to $0$ should be "contained" in this one, in the way we explained before. But there is no possibility appart from $X^*$.

Actually, every point in $A_n$ for some $n\in\mathbb{N}$ has this property. For any of them, let us say $x=\frac{k}{3^{2n-1}}$, we have $$X_n=\textrm{B}\left(\frac{k}{3^{2n-1}},\frac{1}{3^{2n-1}}\right)\cap A_n=\left\lbrace\frac{k}{3^{2n-1}}\right\rbrace$$ and $p_{n,m}(x)=x$ for $n<m$ and $p_{n,n-1}(x)=0$.  So, the only element of $\mathcal{X}$ converging to $x$ is $X^*=\left(0,\ldots,0,\frac{k}{3^{2n-1}},\frac{k}{3^{2n-1}},\ldots\right)$. The subset of the interval consisting of these points, $\bigcup_{n\in\mathbb{N}} A_n$, is dense in the unit interval. 


If we choose a point not in this subset, for example $x=\frac{1}{2}$, we obtain a different result. First of all, we know that $\frac{1}{2}$ is not going to be a point of the approximation, for any $n\in\mathbb{N}$, because if that was true, then $\frac{1}{2}=\frac{k}{3^{2n-3}}$ and that implies $3^{2n-3}=2k$ which is impossible. Now we claim that, for every $n\in\mathbb{N}$, $\frac{1}{2}$ is at the same distance of two consecutive points of the approximation and, because of that, both minimize its distance to the approximation. This is true because $$\frac{k+1}{3^{2n-3}}-\frac{1}{2}=\frac{1}{2}-\frac{k}{3^{2n-3}}\Longleftrightarrow k=\frac{3^{2n-3}-1}{2}.$$ Now, let us construct the "maximal" element for this point. We get:
\begin{eqnarray}
X_1&=&0,\nonumber\\
X_2&=&\textrm{B}\left(\frac{1}{2},\frac{1}{3}\right)\cap A_2=\left\lbrace\frac{1}{3},\frac{2}{3}\right\rbrace,\nonumber\\
\ldots\nonumber\\
X_n&=&\textrm{B}\left(\frac{1}{2},\frac{1}{3^{2n-3}}\right)\cap A_n=\left\lbrace\frac{\frac{3^{2n-3}-1}{2}}{3^{2n-3}},\frac{\frac{3^{2n-3}+1}{2}}{3^{2n-3}}\right\rbrace,\nonumber\\
\ldots\nonumber
\end{eqnarray}
It is easy to see that $p_{n,m}(X_m)=X_n$ for every $n<m$ so $X_n^*=X_n$ for every $n\in\mathbb{N}$. So, for $x=\frac{1}{2}$, $$X^*=\left(0,\left\lbrace\frac{1}{3},\frac{2}{3}\right\rbrace,\left\lbrace\frac{14}{3^3},\frac{15}{3^3}\right\rbrace,\ldots,\left\lbrace\frac{\frac{3^{2n-3}-1}{2}}{3^{2n-3}},\frac{\frac{3^{2n-3}+1}{2}}{3^{2n-3}}\right\rbrace,\ldots\right)$$ which obviously converges to $\frac{1}{2}$ with the Hausdorff metric.
But now, we can see that each term has two elements and the maps $p_{n,m}$ are sending the first to the first and the second to the second, so we can consider the sequences
\begin{eqnarray}
C_1&=&\left(0,\left\lbrace\frac{1}{3}\right\rbrace,\left\lbrace\frac{14}{3^3}\right\rbrace,\ldots,\left\lbrace\frac{\frac{3^{2n-3}-1}{2}}{3^{2n-3}}\right\rbrace,\ldots\right)\nonumber\\
C_2&=&\left(0,\left\lbrace\frac{2}{3}\right\rbrace,\left\lbrace\frac{15}{3^3}\right\rbrace,\ldots,\left\lbrace\frac{\frac{3^{2n-3}+1}{2}}{3^{2n-3}}\right\rbrace,\ldots\right)\nonumber
\end{eqnarray}
and claim that $C_1,\enspace C_2\in\mathcal{X}$ and both converge to $\frac{1}{2}$. So this point has exactly three points of the inverse limit in its inverse image by $\varphi$, being that map not injective.

\subsection{Some special cases}
The same property about the injectivity of $\varphi$ in some points observed in the previous example holds in general.
\begin{prop}\label{teo:inyectiva}
Let $X$ be a compact metric space and $\{\varepsilon_n,A_n,\gamma_n,\delta_n\}_{\todon}$ a \textsc{fas} of $X$. If a point $x\in X$ satisfies $x\in A_n$, for every $n\geqslant n_0$, for some $n_0\in\mathbb{N}$, then the cardinality of $\varphi^{-1}(x)$ is one.
\end{prop}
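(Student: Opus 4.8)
The plan is to show that every point $\{C_n\}$ of the inverse limit lying over $x$ must satisfy $C_n=\{x\}$ for all $n\geqslant n_0$; this pins the sequence down completely, since for $n<n_0$ the term is then forced to be $C_n=p_{n,n_0}(\{x\})$. Combined with the surjectivity of $\varphi$ established inside the proof of Theorem \ref{teo:limitefinito} (which gives $\varphi^{-1}(x)\neq\emptyset$), this yields $\card(\varphi^{-1}(x))=1$. The starting observation is that $x\in A_n$ trivializes the nearby map at $x$: since $\textrm{d}(x,A_n)=0$ is attained only at $x$, we have $A_n(x)=\{x\}$, and hence $p_{n-1,n}(\{x\})=\{x\}$ whenever $n-1\geqslant n_0$.

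Suppose, toward a contradiction, that some $\{C_n\}\in\varphi^{-1}(x)$ satisfies $C_{n_1}\neq\{x\}$ for an index $n_1\geqslant n_0$. First I would extract an infinite ``bad chain''. Pick $w_{n_1}\in C_{n_1}$ with $w_{n_1}\neq x$. Because $C_m=p_{m,m+1}(C_{m+1})=\bigcup_{c\in C_{m+1}}A_m(c)$, every element of $C_m$ is contributed by some $c\in C_{m+1}$, so I can choose inductively $w_{m+1}\in C_{m+1}$ with $w_m\in A_m(w_{m+1})$. Each $w_m\neq x$, for if $w_{m+1}=x$ then $A_m(w_{m+1})=\{x\}$ and $w_m=x$. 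Writing $\beta_m=\textrm{d}(x,w_m)$ we have $\beta_m>0$, and since $w_m\in C_m\subset\textrm{B}(x,\varepsilon_m)$ also $\beta_m<\varepsilon_m$, so $\beta_m\to0$.

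Then I would extract two competing estimates. On one hand, as $x\in A_m$ and $w_m$ is a closest point of $A_m$ to $w_{m+1}$, $\textrm{d}(w_{m+1},w_m)\leqslant\textrm{d}(w_{m+1},x)=\beta_{m+1}$, whence $\beta_m\leqslant\beta_{m+1}+\textrm{d}(w_{m+1},w_m)\leqslant2\beta_{m+1}$; thus $\widetilde\beta_m:=2^m\beta_m$ is nondecreasing and bounded below by $\widetilde\beta_{n_1}>0$. On the other hand, since $w_{m+1}\in X$,
$$\gamma_m\geqslant\textrm{d}(w_{m+1},A_m)=\textrm{d}(w_{m+1},w_m)\geqslant\textrm{d}(x,w_m)-\textrm{d}(x,w_{m+1})=\beta_m-\beta_{m+1}.$$
The adjustment condition $\varepsilon_{m+1}<\tfrac{\varepsilon_m-\gamma_m}{2}$ gives $2^m\gamma_m<u_m-u_{m+1}$, where $u_m:=2^m\varepsilon_m$ is nonincreasing, and summing telescopes to $\sum_{m\geqslant n_1}2^m\gamma_m<u_{n_1}<\infty$.

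The contradiction—the heart of the argument—comes from comparing these. Summation by parts yields
$$\sum_{m=n_1}^{M}2^m(\beta_m-\beta_{m+1})=\widetilde\beta_{n_1}+\tfrac12\sum_{m=n_1+1}^{M}\widetilde\beta_m-\tfrac12\widetilde\beta_{M+1},$$
where $\widetilde\beta_{M+1}<u_{M+1}\leqslant u_{n_1}$ remains bounded while $\tfrac12\sum_{m=n_1+1}^{M}\widetilde\beta_m\geqslant\tfrac12(M-n_1)\widetilde\beta_{n_1}\to\infty$. Hence the partial sums of $2^m(\beta_m-\beta_{m+1})$ tend to $+\infty$, yet they are dominated termwise by $2^m\gamma_m$, whose series converges. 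This is impossible, so no such chain exists and $C_n=\{x\}$ for all $n\geqslant n_0$. I expect the only delicate step to be precisely this final comparison: the bad chain cannot decay faster than $2^{-m}$ (this is what $\widetilde\beta_m\not\to0$ records), whereas the summability $\sum 2^m\gamma_m<\infty$ forced by the adjustment condition means the approximations tighten too fast for such a chain to survive. Everything else is bookkeeping with the triangle inequality.
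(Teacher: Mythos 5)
Your proof is correct, and its skeleton coincides with the paper's: assume some fiber element has a term not equal to $\{x\}$, extract an infinite chain $w_m$ of closest-point relations avoiding $x$, and then play off the two estimates $\beta_m\leqslant 2\beta_{m+1}$ (a chain cannot decay faster than $2^{-m}$, because $x\in A_m$ forces $\textrm{d}(w_{m+1},w_m)\leqslant\beta_{m+1}$) and $\beta_m-\beta_{m+1}\leqslant\gamma_m$ against the adjustment condition. The differences are in execution, and they are worth noting. First, the paper routes everything through the maximal element $X^*$: it proves $X^*_n=\{x\}$ for $n\geqslant n_0$ by analyzing chains inside the sets $X^n=\textrm{B}(x,\varepsilon_n)\cap A_n$, and then kills every other fiber element by the maximality property $C_n\subset X^*_n$ established in the proof of Theorem \ref{teo:limitefinito}; you instead run the chain argument directly inside the sets $C_m$ of an arbitrary element of $\varphi^{-1}(x)$, needing only Remark \ref{distancialimite} (that $C_m\subset\textrm{B}(x,\varepsilon_m)$), which makes the reduction somewhat more self-contained. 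Second, the final contradiction is packaged differently: the paper proves by induction that $\varepsilon_{n_0+i}-d_i<\frac{2\varepsilon_{n_0}-(i+2)d_0}{2^{i+1}}$, so that $d_i$ eventually exceeds $\varepsilon_{n_0+i}$ and the chain leaves $X^{n_0+i}$ at an explicitly computable index; you renormalize by $2^m$, observe that adjustedness makes $\sum_m 2^m\gamma_m$ telescope to the finite bound $u_{n_1}=2^{n_1}\varepsilon_{n_1}$ while $\widetilde\beta_m=2^m\beta_m$ is nondecreasing and bounded away from $0$, and contradict boundedness of the partial sums via summation by parts. The two mechanisms encode the same quantitative fact; the paper's induction produces an explicit index at which the chain dies, while your summability formulation is arguably cleaner and isolates precisely why the adjustment inequality $\varepsilon_{m+1}<\frac{\varepsilon_m-\gamma_m}{2}$ is the right hypothesis.
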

\begin{proof}
We are going to prove that, if $x\in X$ belongs to $A_n$ for every $n\geqslant n_0$, then $$X^*=\left(p_{1,n_0}(x),\ldots,p_{n_0-1,n_0}(x),x,x,\ldots\right).$$ So, there are no more points $C\in\mathcal{X}$ satisfying $\varphi(C)=x$, appart from $X^*$ (because of the maximality of $X^*$). For $n\geqslant n_0$, we have that $x\in X_n=\textrm{B}(x,\varepsilon_n)\cap A_n$, and then, $x\in X_n^*$ for every $n\geqslant n_0$, because $p_{n,m}(x)=x$ for every $n_0\leqslant n<m$. So $X^*$ has the form $$X^*=\left(p_{1,n_0}(X^*_{n_0}),\ldots,p_{n_0-1,n_0}(X^*_{n_0}),X^*_{n_0},X^*_{n_0+1},\ldots\right).$$ Now we prove that $X^*_n=\{x\}$ for every $n\geqslant n_0$. Let $y_0\in X_{n_0}$. Then, $y\in X^*_{n_0}$ if and only if there exists, for every $i\in\mathbb{N}$, $y_i\in A_{n_0+i}$ such that $y\in p_{n_0+i,n_0+i+1}(y_{i+1})$ and $y_i\in X_{n_0+1}$ for every $i\in\mathbb{N}$. We are going to see that, if there is a chain of points satisfiying the first condition, they cannot satisfy the second. So, let us suppose there exists a chain $y_i\in A_{n_0+i}$, for every $i\in\mathbb{N}$ such that one belongs to the image of the following. For the sake of simplicity, let us write $d_i:=\textrm{d}(x,y_i)$ for $i\in\mathbb{N}$, (and $d_0=\textrm{d}(x,y_0)$). For every $i\in\mathbb{N}$, $y_{i+1}$ is closer (or at the same distance) to $y_i$ than to $x$, so we have $$d_{i+1}\geqslant\textrm{d}(y_i,y_{i+1})<\gamma_{n_0+i}.$$ Moreover, it is obvious that $d_i\leqslant d_{i+1}+\textrm{d}(y_{i+1},y_i)$, i.e., $d_i-d_{i+1}\leqslant\textrm{d}(y_{i+1},y_i)$. Combining this with the previous observation, we get $d_i-d_{i+1}<\gamma_{n_0+1}$. On the other hand, we have that, for every $i\in\mathbb{N}$, $d_i\leqslant d_{i+1}+\textrm{d}(y_{i+1},y_i)\leqslant 2d_{i+1}$, so $d_{i+m}\geqslant\frac{d_i}{2^m}$. We supposed $y_0\in X_{n_0}$, so $\varepsilon_{n_0}-d_0>0$. We claim that, for every $i\in\mathbb{N}$, $$\varepsilon_{n_0+i}-d_i<\frac{2\varepsilon_{n_0}-(i+2)d_0}{2^{i+1}}.$$ We prove it by induction. The first case is
\begin{eqnarray*}
\varepsilon_{n_0+1}-d_1 & < & \frac{\varepsilon_{n_0}-\gamma_{n_0}}{2}-d_1<\frac{\varepsilon_{n_0}-d_0}{2}-\frac{d_1}{2}\\
& \leqslant & \frac{\varepsilon_{n_0}-d_0}{2}-\frac{d_0}{2^2}=\frac{2\varepsilon_{n_0}-3d_0}{2^2}.
\end{eqnarray*}
Now, suppose the hypothesis of induction is satisfied, and we check
\begin{eqnarray*}
\varepsilon_{n_0+i+1}-d_{i+1} & < & \frac{\varepsilon_{n_0+i}-\gamma_{n_0+i}}{2}-d_{i+1}<\frac{\varepsilon_{n_0+i}-d_{i}}{2}-\frac{d_{i+1}}{2}\\
& < & \frac{2\varepsilon_{n_0}-(i+2)d_0}{2^{i+2}}-\frac{d_0}{2^{i+2}}=\frac{2\varepsilon_{n_0}-(i+3)d_0}{2^{i+2}}.
\end{eqnarray*}
It is obvious that there exists an $i\in\mathbb{N}$ such that $(i+3)d_0>2\varepsilon_{n_0}$. For this $i$, we have that $\varepsilon_{n_0}-d_i<0$, so $y_i\notin X_{n_0+i}$, and then, $y_0\notin X^*_{n_0}$. We conclude $X^*_{n_0}=\{x\}$ and the same argument can be applied to show that $X_n^*=\{x\}$, for every $n\geqslant n_0\qedhere$
\end{proof}

In view of this result, it is natural to look for \textsc{fas} making the map $\varphi$ the more "injective" posible, i.e., injective in the largest possible set of points.
\begin{obs} 
For every \textsc{fas} of $X$, the set $\bigcup_{\todon}A_n$ is always dense in $X$. For each open set $U\subset X$ there exists $x\in U$ and $\varepsilon>0$ such that $\textrm{B}(x,\epsilon)\subset U$. Let us select $n_0\in\mathbb{N}$ such that $\textrm{B}(x,\epsilon_{n_0})\subset\textrm{B}(x,\epsilon)$. Then, for any $a\in A_{n_0}$ with $\textrm{d}(x,a)<\epsilon_{n_0}$, we have that $\textrm{B}(x,\epsilon)\cap A_{n_0}\neq\emptyset$. This also shows that every compact metric space has a countable dense subset.
\end{obs}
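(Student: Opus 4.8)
The plan is to prove density directly from the two defining properties of a \textsc{fas}: that each $A_n$ is a finite $\varepsilon_n$-approximation of $X$, and that $\{\varepsilon_n\}_{\todon}$ decreases to zero. Recall that $A_n$ being an $\varepsilon_n$-approximation means that for every point $w\in X$ there is some $a\in A_n$ with $\textrm{d}(w,a)<\varepsilon_n$; this is exactly the tool that lets us place a point of $A_n$ near any prescribed location, while the convergence $\varepsilon_n\to 0$ lets us make that proximity as tight as we wish. No information about the transition maps or the inverse limit should be needed.

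First I would unwind density in a metric space: it suffices to show that $\bigcup_{\todon}A_n$ meets every non-empty open set $U\subset X$. Given such a $U$, I would fix any $x\in U$ and, using that $U$ is open, choose $\varepsilon>0$ with $\textrm{B}(x,\varepsilon)\subset U$. Next, invoking $\varepsilon_n\to 0$, I would select an index $n_0$ with $\varepsilon_{n_0}<\varepsilon$. Applying the $\varepsilon_{n_0}$-approximation property of $A_{n_0}$ to the point $x$ produces a point $a\in A_{n_0}$ with $\textrm{d}(x,a)<\varepsilon_{n_0}<\varepsilon$, so that $a\in\textrm{B}(x,\varepsilon)\subset U$. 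Since $a\in\bigcup_{\todon}A_n$, the intersection $U\cap\bigcup_{\todon}A_n$ is non-empty, which is precisely density.

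For the second assertion I would simply observe that $\bigcup_{\todon}A_n$ is a countable union of finite sets, hence countable, and we have just shown it is dense; therefore it is a countable dense subset of $X$. Combining this with Theorem \ref{teo:mainconstruction}, which guarantees that every compact metric space admits a \textsc{fas}, yields separability of every compact metric space.

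There is no genuine obstacle here: the statement is an essentially immediate consequence of the definitions, and the only point worth double-checking is that a \textsc{fas} does carry both ingredients we use --- the honest $\varepsilon_n$-approximation property of each $A_n$ and the convergence $\varepsilon_n\to 0$ --- both of which are built into the definition of a \textsc{fas} given just after Theorem \ref{teo:mainconstruction}. Since the argument uses neither the maps $p_{n,n+1}$ nor the inverse limit $\mathcal{X}$, it is completely insensitive to the non-uniqueness of the \textsc{fas}.
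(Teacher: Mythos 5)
Your proof is correct and is essentially identical to the paper's own argument: both fix a ball $\textrm{B}(x,\varepsilon)\subset U$, use $\varepsilon_n\to 0$ to pick $n_0$ with $\varepsilon_{n_0}<\varepsilon$, and invoke the $\varepsilon_{n_0}$-approximation property of $A_{n_0}$ to place a point of $\bigcup_{\todon}A_n$ inside $U$, then note that a countable union of finite sets is countable to conclude separability. There is nothing to add; your version even states the separability step (via Theorem \ref{teo:mainconstruction}) slightly more explicitly than the paper does.
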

We want to apply Proposition \ref{teo:inyectiva} to obtain inyectivity in a dense subset of $X$. The following construction will be useful.
\begin{cons}
For every compact metric space $X$, there exists a \textsc{fas} with $A_n\subset A_{n+1}$ for every $\todon$: If $M=\textrm{diam}(X)$, let us consider $\epsilon_1>M$, and $A_1=\{x\}$ with $x\in X$. Then, consider $\epsilon_2$ with the usual rule. Now, for $A_2$, we take the union $A_2=A'_2\cap A_1$ where $A'_2$ is a $\epsilon_2$-approximation of $X$, then so is $A_2$. We can proceed in this way for every $\todon$. If we have that $A_n$ is a $\epsilon_n$-approximation of $X$, consider $\gamma_n$ and take $\epsilon_{n+1}$ as always. Then consider $A_{n+1}$ as the union $A'_{n+1}\cap A_n$ where $A'_{n+1}$ is a $\epsilon_{n+1}$-approximation of $X$ and, hence, $A_{n+1}$ too. In this way, we obtain a \textsc{fas} of $X$ such that $A_n\subset A_{n+1}$ for every $\todon$.
\end{cons}
The best situations we can have are the following.
\paragraph{Countable spaces}
\begin{prop}
Let $X$ be a countable metric space. There exists a \textsc{fas} of $X$ with inverse limit $\mathcal{X}$ homeomorphic to $X$.
\end{prop}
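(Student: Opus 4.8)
The plan is to turn the canonical projection $\varphi:\mathcal{X}\rightarrow X$ from the main theorem into a bijection; once that is done, the homeomorphism is automatic. Recall from the proof of Theorem \ref{teo:limitefinito} that $\varphi$ (sending a thread $\{C_n\}$ to its Hausdorff limit) is always continuous and surjective, and that it comes equipped with a continuous section $\phi:X\rightarrow\mathcal{X}^*\subset\mathcal{X}$ which is a homeomorphism onto its image and satisfies $\varphi\cdot\phi=1_X$. If I can arrange that $\varphi$ is injective, then for any $C\in\mathcal{X}$ the equalities $\varphi(\phi(\varphi(C)))=\varphi(C)$ and injectivity force $\phi(\varphi(C))=C$, so $\phi\cdot\varphi=1_{\mathcal{X}}$ as well. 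Hence $\mathcal{X}^*=\mathcal{X}$ and $\phi:X\rightarrow\mathcal{X}$ is the desired homeomorphism. (Compactness of $X$ is in force throughout, so a \textsc{fas} exists in the first place.)

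So the entire task reduces to forcing $\varphi$ to be injective, and this is exactly where Proposition \ref{teo:inyectiva} and countability enter. By that proposition, a point $x\in X$ has $\card(\varphi^{-1}(x))=1$ as soon as $x$ belongs to $A_n$ for all sufficiently large $n$. Since $X$ is countable, I would enumerate it as $X=\{x_1,x_2,\ldots\}$ and run the nested construction described above (the one producing a \textsc{fas} with $A_n\subset A_{n+1}$ for every $\todon$), but at the $n$-th stage additionally throw the point $x_n$ into the approximation. Concretely, having fixed $\varepsilon_n$ by the usual adjustment rule, set $A_n=A'_n\cup A_{n-1}\cup\{x_n\}$, where $A'_n$ is any finite $\varepsilon_n$-approximation of $X$. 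Adjoining finitely many points to a finite $\varepsilon_n$-approximation keeps it a finite $\varepsilon_n$-approximation, so each $A_n$ is legitimate, the sequence stays nested, and $x_n\in A_n$.

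The nestedness then propagates each inserted point forward: since $A_n\subset A_m$ for all $m\geqslant n$, we get $x_n\in A_m$ for every $m\geqslant n$. Thus every point of $X$, being some $x_k$, lies in $A_m$ for all $m\geqslant k$, and Proposition \ref{teo:inyectiva} yields $\card(\varphi^{-1}(x_k))=1$. Consequently $\varphi$ is injective on all of $X$, and by the first paragraph $\phi:X\rightarrow\mathcal{X}$ is a homeomorphism, exhibiting $\mathcal{X}$ as homeomorphic to $X$.

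I expect the only genuinely delicate point to be the bookkeeping inside the inductive construction: one must check that inserting the extra points $x_n$ never conflicts with the adjustment inequalities defining a \textsc{fas}. This is harmless, since $\gamma_n=\sup\conjunto{\dist{x}{A_n}:x\in X}$ can only decrease when points are added to $A_n$, and $\varepsilon_{n+1}$ is selected only after $A_n$ (hence $\gamma_n$) has been fixed; so the requirement that $\varepsilon_{n+1}$ be adjusted to $A_n$ is untouched by the enlargement. Everything else is a direct application of the two cited results.
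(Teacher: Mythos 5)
Your proposal is correct and follows essentially the same route as the paper: enumerate $X$, build a nested \textsc{fas} with $x_n\in A_n$ and $A_n\subset A_{n+1}$ (the paper explicitly suggests exactly this enlargement of the general nested construction, and then offers the initial-segment variant $A_n=\{x_1,\ldots,x_{r(n)}\}$ as one concrete realization), and invoke Proposition \ref{teo:inyectiva} to make every fiber of $\varphi$ a singleton. The only cosmetic difference is in the final step, where you deduce $\phi\cdot\varphi=1_{\mathcal{X}}$ from $\varphi\cdot\phi=1_X$ plus injectivity, while the paper directly concludes that the continuous bijection $\varphi:\mathcal{X}\rightarrow X$ is a homeomorphism.
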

\begin{proof}
We can write $X=\{x_1,x_2,\ldots,x_n,\dots\}$. We just need to find a \textsc{fas} for $X$ satisfying $x_n\in A_n$ and $A_n\subset A_{n+1}$ for every $\todon$. The first condition gives us $\bigcup_{\todon}A_n=X$ and the second one will make $\varphi$ injective on the set $$\varphi^{-1}\left(\bigcup_{\todon}A_n\right)=\varphi^{-1}(X)=\mathcal{X},$$ and then, $\varphi:\mathcal{X}\rightarrow X$ will be a homeomorphism. There are many ways of doing so. We can just do the general construction forcing each $A_n$ to contain $x_n$ and $A_m$, for every $m<n$. For example, if we consider, for every $\todon$, the numbers $$r(n)=\min\big{\lbrace} i\in\mathbb{N}:\{x_1,\ldots,x_i\} \enspace\textrm{is a}\enspace\epsilon_n\enspace\textrm{approximation of}\enspace X\big{\rbrace},$$ it is clear that $r(n+1)\geqslant r(n)$ and then we can write the approximations as
\begin{eqnarray}
A_1&=&\{x_1\},\nonumber\\
A_2&=&\{x_1,\ldots,x_{r(2)}\},\nonumber\\
&\ldots&\nonumber\\
A_n&=&\{x_1,\ldots,x_{r(n)}\},\nonumber\\
&\ldots&\nonumber
\end{eqnarray}
and we are done$\qedhere$
\end{proof}
Now we face the case of proper dense subsets of $X$. First of all, we observe the following
\begin{obs}
For every dense subset $Y\subset X$ of a compact metric space, and every $\epsilon>0$, there exists an $\epsilon$-approximation $A\subset Y$: Let us consider the covering $\lbrace\ball{x}{\frac{\varepsilon}{2}}:x\in X\rbrace$ and a finite subcovering $\lbrace\ball{x_1}{\frac{\epsilon}{2}},\ldots,\ball{x_k}{\frac{\epsilon}{2}}\rbrace$. Now we take $y_1,\ldots,y_k\in Y$ such that $\dist{x_i}{y_i}<\frac{\epsilon}{2}$ for every $i=1,\ldots,k$, so $\{y_1,\ldots,y_k\}$ is an $\epsilon$-approximation of $X$.
\end{obs}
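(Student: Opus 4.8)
The plan is to exploit the compactness of $X$ and the density of $Y$, and then glue the two together with the triangle inequality. First I would extract a finite net from $X$ by compactness: the family $\conjunto{\ball{x}{\frac{\varepsilon}{2}}:x\in X}$ is an open cover of $X$, so by compactness there is a finite subcover $\conjunto{\ball{x_1}{\frac{\varepsilon}{2}},\ldots,\ball{x_k}{\frac{\varepsilon}{2}}}$ with centres $x_1,\ldots,x_k\in X$. These centres need not lie in $Y$, and that is precisely where the density hypothesis will be invoked.

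Next I would nudge each centre to a nearby point of $Y$. Since $Y$ is dense in $X$, for each index $i$ I can choose $y_i\in Y$ with $\dist{x_i}{y_i}<\frac{\varepsilon}{2}$, and set $A=\conjunto{y_1,\ldots,y_k}\subset Y$, a finite subset of $Y$ by construction. To finish, I would check that $A$ is an $\varepsilon$-approximation: given any $x\in X$, the point $x$ lies in some ball $\ball{x_i}{\frac{\varepsilon}{2}}$ of the subcover, so $\dist{x}{x_i}<\frac{\varepsilon}{2}$, and combining with the choice of $y_i$ gives $\dist{x}{y_i}\leqslant\dist{x}{x_i}+\dist{x_i}{y_i}<\frac{\varepsilon}{2}+\frac{\varepsilon}{2}=\varepsilon$. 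Hence $y_i\in A$ witnesses the defining condition of an $\varepsilon$-approximation, as required.

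There is no genuine obstacle here; once the two tools are identified everything is forced. The only subtlety worth flagging is that the net centres $x_i$ live in $X$ and not in $Y$, so one must resist claiming them as the approximation — the density hypothesis is used exactly to replace each $x_i$ by an honest point $y_i\in Y$ while keeping the radius budget at $\frac{\varepsilon}{2}+\frac{\varepsilon}{2}=\varepsilon$. This is also the reason the original cover is taken with radius $\frac{\varepsilon}{2}$ rather than $\varepsilon$: it reserves half of the allowed distance for the density step.
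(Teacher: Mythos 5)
Your proof is correct and follows exactly the same route as the paper: a finite subcover of $\conjunto{\ball{x}{\frac{\varepsilon}{2}}:x\in X}$ by compactness, replacement of each centre $x_i$ by a point $y_i\in Y$ with $\dist{x_i}{y_i}<\frac{\varepsilon}{2}$ using density, and the triangle inequality to conclude. Your added remark about why the radius $\frac{\varepsilon}{2}$ (rather than $\varepsilon$) is needed makes the argument slightly more explicit than the paper's, but the content is identical.
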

We can state  the main result in this direction
\begin{prop}
For every countable dense subset of a compact metric space, $Y\subset X$, there exists a \textsc{fas} of $X$ such that there is a dense subset of $\mathcal{X}$ which is homeomorphic to $Y$. 
\end{prop}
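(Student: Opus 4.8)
The plan is to exhibit the required dense copy as $\phi(Y)$, where $\phi:X\to\mathcal{X}$ is the embedding built in the proof of Theorem~\ref{teo:limitefinito}, and the argument splits into choosing a suitable \textsc{fas}, identifying the copy, and proving density. First I would construct the \textsc{fas} so that every approximation lies inside $Y$ and the approximations are nested. Starting from the nested \emph{Construction} above, I would additionally force $A_n\subset Y$ using the observation that a dense subset contains an $\varepsilon$-approximation for every $\varepsilon>0$: begin with $A_1=\{x_0\}$ for some $x_0\in Y$, and at each inductive step, having fixed $\varepsilon_{n+1}$ adjusted to $A_n$ by the usual rule, pick an $\varepsilon_{n+1}$-approximation $A'_{n+1}\subset Y$ and set $A_{n+1}=A_n\cup A'_{n+1}$. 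Then $A_{n+1}$ is still an $\varepsilon_{n+1}$-approximation, and by induction $A_n\subset Y$ and $A_n\subset A_{n+1}$ for every $n\in\mathbb{N}$.

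Second, the homeomorphic copy is immediate. By Theorem~\ref{teo:limitefinito} the map $\phi:X\to\mathcal{X}^{*}=\phi(X)$ is a homeomorphism onto its image, so its restriction $\phi|_Y:Y\to\phi(Y)$ is a homeomorphism onto the subspace $\mathcal{Y}:=\phi(Y)\subset\mathcal{X}$. Hence $\mathcal{Y}$ is homeomorphic to $Y$, and the only remaining point is to show that $\mathcal{Y}$ is dense in $\mathcal{X}$.

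Third, for density let $\{C_n\}\in\mathcal{X}$ and consider a basic open neighborhood $V=\left(2^{C_1}\times\ldots\times2^{C_k}\times U_{2\varepsilon_{k+1}}(A_{k+1})\times\ldots\right)\cap\mathcal{X}$, so that a sequence $\{D_n\}$ lies in $V$ exactly when $D_i\subset C_i$ for $i\leq k$; since such sets form a base, it suffices to meet each one. I would pick any $a\in C_k$. Because $C_k\subset A_k\subset Y$ we have $a\in Y$, and by nestedness $a\in A_n$ for every $n\geq k$, so Proposition~\ref{teo:inyectiva} (its proof) gives $\phi(a)_n=\{a\}$ for $n\geq k$ and hence $\phi(a)_i=p_{i,k}(\{a\})$ for $i\leq k$. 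Since $\{C_n\}\in\mathcal{X}$ we have $C_i=p_{i,k}(C_k)$, and as every bonding map is a union over the points of its argument it is monotone; from $\{a\}\subset C_k$ we obtain $\phi(a)_i=p_{i,k}(\{a\})\subset p_{i,k}(C_k)=C_i$ for all $i\leq k$. Thus $\phi(a)\in V\cap\mathcal{Y}$, so every basic open set meets $\mathcal{Y}$ and $\mathcal{Y}$ is dense in $\mathcal{X}$, completing the proof.

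The main obstacle is the density step. The delicate idea is that the \emph{top} coordinate $C_k$ of an arbitrary element of the inverse limit already contains a point $a\in Y$ whose canonical image $\phi(a)$ is trapped \emph{below} $\{C_n\}$ in the first $k$ coordinates; this works precisely because the bonding maps are monotone and, by Proposition~\ref{teo:inyectiva}, $\phi(a)$ stabilizes to the singleton $\{a\}$. By contrast, the construction of the \textsc{fas} with $A_n\subset Y$ and the identification of the homeomorphic copy are routine, the only thing to verify being that the adjustment inequalities survive the modification that keeps each $A_n$ inside $Y$.
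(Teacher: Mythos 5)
Your proof is correct and follows essentially the same route as the paper: a nested \textsc{fas} with every approximation contained in $Y$, the embedding $\phi$ of Theorem \ref{teo:limitefinito} supplying the homeomorphic copy, and density obtained by mapping a point $a\in C_k$ of the $k$-th coordinate to its stabilized sequence $\phi(a)=\left(p_{1,k}(\{a\}),\ldots,p_{k-1,k}(\{a\}),\{a\},\{a\},\ldots\right)$, exactly the mechanism (via Proposition \ref{teo:inyectiva}) used in the paper. The only differences are mild streamlinings: you take the copy to be $\phi(Y)$ rather than $\varphi^{-1}(Y)$, so you never need to enumerate $Y$ and force $\bigcup_{n\in\mathbb{N}}A_n=Y$ (countability of $Y$ is in fact never used), you prove density directly in $\mathcal{X}$ rather than in $\mathcal{X}^*$, and you spell out the monotonicity of the bonding maps, a point the paper's assertion $c^*\in W$ leaves implicit.
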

\begin{proof}
If we write $Y=\{y_1,y_2,\ldots,y_n,\ldots\}$, it is easy to obtain a \textsc{fas} of $X$ such that $A_n\subset A_{n+1}$, for every $\todon$, and $\bigcup_{\todon}A_n=Y$. For example, using the previous remark, we can take as approximations 
\begin{eqnarray}
A_1&=&\{y_1\}\nonumber\\
A_2&=&\{y_2\}\cup A_1\cup A'_2\textrm{ with }A'_2\subset Y\textrm{ an }\epsilon_2\textrm{-approximation of }X,\nonumber\\
A_3&=&\{y_3\}\cup A_2\cup A'_3\textrm{ with }A'_3\subset Y\textrm{ an }\epsilon_3\textrm{-approximation of }X,\nonumber\\
&\ldots&\nonumber\\
A_n&=&\{y_n\}\cup A_{n-1}\cup A'_n\textrm{ with }A'_n\subset Y\textrm{ an }\epsilon_n\textrm{-approximation of }X,\nonumber\\
&\ldots&\nonumber
\end{eqnarray}
If we restrict the map $\varphi:\mathcal{X}\rightarrow X\supset Y=\bigcup_{\todon}A_n$ to the set $\varphi^{-1}(Y)$ we obtain that $$\varphi\mid_{\varphi^{-1}(Y)}:\varphi^{-1}(Y)\longrightarrow Y$$ is injective and hence a homeomorphism. So $\varphi^{-1}$ is the desired set. We have the inclusions $\varphi^{-1}(Y)\subset\mathcal{X}^*\subset\mathcal{X}$, by construction (recall Proposition \ref{teo:inyectiva}). Now, to see that $\varphi^{-1}(Y)$ is dense in $\mathcal{X}^*$. Let $V$ be any open set of $\mathcal{X}^*$ and $C\in V$ any point of it, where $C=(C_1,C_2,\ldots,C_n,\ldots)$. Choose an open neighborhood from the basis $$C\in W=(2^{C_1}\times\ldots\times2^{C_m}\times U_{2\epsilon_{m+1}}(A_{m+1})\times\ldots)\cap\mathcal{X}\subset V$$ and select any $c\in C_m$. Then, $c^*=(\ldots,c,c,\ldots)$, because $A_n\subset A_{n+1}$ for every $\todon$. So $c^*\in W\cap\varphi^{-1}(Y)\subset V\cap\varphi^{-1}(Y)$, which implies $\overline{\varphi^{-1}(Y)}=\mathcal{X}^*\qedhere$
\end{proof}
\begin{obs}
The inclusion $\varphi^{-1}(Y)$ of last proposition is proper: Recall example \ref{ej:intervalotriadico} where $Y=\bigcup_{\todon}A_n$ with $$A_n=\left\lbrace\frac{k}{3^{2n-3}}:k=0,1,\ldots,3^{2n-3}\right\rbrace$$ and, while $\frac{1}{2}^*$ is obviously an element of $\mathcal{X}^*$, it does not belong to $\varphi^{-1}(Y)$, since $\frac{1}{2}$ does not belong to any approximation $A_n$.
\end{obs}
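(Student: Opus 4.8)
The plan is to produce a single explicit point of $\mathcal{X}^*$ that fails to lie in $\varphi^{-1}(Y)$, working inside the triadic \textsc{fas} of Example \ref{ej:intervalotriadico}, where $Y=\bigcup_{\todon}A_n$ and $A_n=\{k/3^{2n-3}:k=0,\ldots,3^{2n-3}\}$. Since the previous proposition already establishes $\varphi^{-1}(Y)\subset\mathcal{X}^*$, exhibiting one point of $\mathcal{X}^*\setminus\varphi^{-1}(Y)$ is exactly what is needed to conclude that this inclusion is strict.

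First I would recall the structure fixed in the proof of Theorem \ref{teo:limitefinito}: the copy $\mathcal{X}^*=\phi(X)$ consists precisely of the maximal elements $\phi(x)=\{X_n^*\}$, one for each $x\in X=[0,1]$, and these satisfy $\varphi\cdot\phi=1_X$. Taking $x=\tfrac{1}{2}$, the maximal element $\tfrac{1}{2}^*:=\phi(\tfrac{1}{2})=\big(0,\{\tfrac13,\tfrac23\},\{\tfrac{14}{3^3},\tfrac{15}{3^3}\},\ldots\big)$, which was computed explicitly in the example, therefore belongs to $\mathcal{X}^*$ and satisfies $\varphi(\tfrac{1}{2}^*)=\tfrac{1}{2}$.

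Next I would verify that $\tfrac{1}{2}\notin Y$, which is the only computation involved and is elementary: membership $\tfrac{1}{2}\in A_n$ would force $\tfrac{1}{2}=k/3^{2n-3}$ for some integers $k,n$, i.e. $3^{2n-3}=2k$, which is impossible because $3^{2n-3}$ is odd while $2k$ is even. Hence $\varphi(\tfrac{1}{2}^*)=\tfrac{1}{2}\notin Y$, so $\tfrac{1}{2}^*\notin\varphi^{-1}(Y)$ while $\tfrac{1}{2}^*\in\mathcal{X}^*$, and the inclusion $\varphi^{-1}(Y)\subset\mathcal{X}^*$ is proper. There is no real obstacle here: both the identity $\varphi(\tfrac{1}{2}^*)=\tfrac{1}{2}$ and the maximality placing $\tfrac{1}{2}^*$ in $\mathcal{X}^*$ were already worked out in Example \ref{ej:intervalotriadico}, so the argument reduces to the one-line parity remark above. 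The only point worth stating with a little care is \emph{why} $\tfrac{1}{2}^*$ lies in $\mathcal{X}^*$ and not merely in $\mathcal{X}$, and this is secured by the surjectivity of $\varphi$ together with the maximality characterization $X_n^*=\bigcup_{\{C_n\}\in\varphi^{-1}(x)}C_n$ obtained in the proof of Theorem \ref{teo:limitefinito}.
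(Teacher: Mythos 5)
Your proposal is correct and follows essentially the same route as the paper: both take the maximal element $\tfrac{1}{2}^{*}=\phi\!\left(\tfrac{1}{2}\right)\in\mathcal{X}^{*}$ already computed in Example \ref{ej:intervalotriadico} and rule out $\tfrac{1}{2}\in A_n$ by the parity obstruction $3^{2n-3}=2k$. Your additional care in justifying $\varphi(\tfrac{1}{2}^{*})=\tfrac{1}{2}$ via $\varphi\cdot\phi=1_X$ and the maximality characterization is a welcome but inessential elaboration of what the paper leaves implicit.
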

\paragraph{Ultrametric spaces}
Another structures that will give us a complete topological reconstruction are ultrametric spaces. An ultrametric space $X$ is a metric space with an extra property of the distance: Instead of satisfying just the triangle inequality, they satisfy the strong triangle inequality, which is:
$$\forall x,y,z\in X,\enspace\dist{x}{y}\leqslant\max\left\lbrace\dist{x}{z},\dist{y}{z}\right\rbrace.$$
This inequality gives us some properties that make the ultrametric spaces very special ones. For example\footnote{See chapter 2 of \cite{Ra} for more properties and detailed proofs about ultrametric spaces.}:
\begin{enumerate}
\item[$\cdot$]Every triangle is isosceles, with the non equal side smaller than the other two.
\item[$\cdot$]For every $x,y\in X$ and $\epsilon\geqslant\delta>0$, $\ball{x}{\epsilon}\cap\ball{y}{\delta}\neq\emptyset$ implies that $\ball{y}{\delta}\subset\ball{x}{\epsilon}$.
\end{enumerate}

We want to show that, for the case of ultrametric spaces, there exists a \textsc{fas} such that they recover the topological type of the space. The key idea here, is that for those spaces there are very special approximations:
\begin{lem}Let $X$ be a compact ultrametric space. For every $\epsilon>0$, there exists an $\epsilon$-approximation of $X$, $\{x_1,\ldots,x_k\}$, such that $\ball{x_i}{\epsilon}\cap\ball{x_j}{\epsilon}=\emptyset$ for every $i\neq j$.
\end{lem}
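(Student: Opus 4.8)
The plan is to exploit the fact that, in an ultrametric space, balls of a fixed radius behave like the blocks of a partition. First I would record the following dichotomy: for any two points $x,y\in X$, the balls $\ball{x}{\epsilon}$ and $\ball{y}{\epsilon}$ are either disjoint or identical. This follows at once from the second listed property of ultrametric spaces, applied with $\delta=\epsilon$: if $\ball{x}{\epsilon}\cap\ball{y}{\epsilon}\neq\emptyset$, then that property gives $\ball{y}{\epsilon}\subset\ball{x}{\epsilon}$, and by symmetry $\ball{x}{\epsilon}\subset\ball{y}{\epsilon}$, so the two balls coincide. Equivalently, one checks directly that the relation $x\sim y\Leftrightarrow\dist{x}{y}<\epsilon$ is an equivalence relation—reflexivity and symmetry are immediate, and transitivity is precisely the strong triangle inequality—whose classes are exactly the balls $\ball{x}{\epsilon}$.

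Next I would invoke compactness to reduce to finitely many balls. The family $\{\ball{x}{\epsilon}:x\in X\}$ is an open cover of $X$, so it admits a finite subcover $\ball{x_1}{\epsilon},\ldots,\ball{x_k}{\epsilon}$. By the dichotomy, any ball $\ball{y}{\epsilon}$ that meets one of these must equal it; since every $y\in X$ lies in some member of the subcover, there are in fact only finitely many distinct $\epsilon$-balls, namely (after discarding repetitions among the $x_i$) $\ball{x_1}{\epsilon},\ldots,\ball{x_k}{\epsilon}$, and these are pairwise disjoint as distinct classes.

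Finally I would verify that the chosen representatives are the desired set. The points $\{x_1,\ldots,x_k\}$ form an $\epsilon$-approximation because every $x\in X$ lies in some $\ball{x_i}{\epsilon}$, i.e.\ $\dist{x}{x_i}<\epsilon$; and the balls $\ball{x_i}{\epsilon}$ are pairwise disjoint by construction, which is exactly what is claimed.

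The only genuine content is the opening dichotomy, which is an immediate consequence of the strong triangle inequality (or of the already-stated property), so it is hardly an obstacle; everything afterward is a routine compactness argument. The single conceptual point worth keeping straight is that the covering condition and the disjointness of the balls are compatible precisely because the $\epsilon$-balls \emph{partition} $X$—a phenomenon special to ultrametric spaces, where it would fail in a general metric space.
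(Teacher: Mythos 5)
Your proposal is correct and follows essentially the same route as the paper: take a finite subcover of the cover by $\epsilon$-balls, and use the ultrametric property that two $\epsilon$-balls which intersect must coincide, so that after discarding repetitions the representatives give an $\epsilon$-approximation with pairwise disjoint balls. Your write-up is somewhat more explicit than the paper's (notably the equivalence-relation formulation and the explicit removal of duplicate balls, which the paper leaves implicit), but the underlying argument is identical.
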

\begin{proof}
The covering by open balls $\{\ball{x}{\epsilon}:x\in X\}$ of $X$ has a finite subcover $\{\ball{x_1}{\epsilon},\ldots,\ball{x_k}\}$. So, $\{x_1,\ldots,x_k\}$ is an $\epsilon$-approximation of $X$. Now for any $i\neq j$ such that $\ball{x_i}{\epsilon}\cap\ball{x_j}{\epsilon}\neq\emptyset$ it turns out that $\ball{x_i}{\epsilon}=\ball{x_j}{\epsilon}\qedhere$
\end{proof}
We can state the main theorem about ultrametric spaces.
\begin{teo}
For every compact ultrametric space $X$, there exists a \textsc{fas} with inverse limit $\mathcal{X}=X$.
\end{teo}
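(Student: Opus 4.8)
The plan is to run the Main Construction using the special approximations provided by the preceding lemma and then to prove that the associated map $\varphi$ from Theorem \ref{teo:limitefinito} is injective; once this is done, that theorem supplies the homeomorphism for free. First I would build a \textsc{fas} $\{\U{n},p_{n,n+1}\}$ in the usual inductive manner, but at each stage choosing $A_n$ to be an $\varepsilon_n$-approximation with pairwise disjoint balls $\ball{a}{\varepsilon_n}$, $a\in A_n$, as furnished by the preceding lemma (the numerical constraints making $\varepsilon_{n+1}$ adjusted to $A_n$ are met exactly as in Theorem \ref{teo:mainconstruction}, and the disjointness requirement places no obstruction on choosing $\varepsilon_{n+1}$ small). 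The point of this choice is a rigidity property: since $\dist{a}{a'}<\varepsilon_n$ together with the strong triangle inequality would force $\ball{a}{\varepsilon_n}=\ball{a'}{\varepsilon_n}$, distinct points of $A_n$ satisfy $\dist{a}{a'}\geqslant\varepsilon_n$, and, more importantly, any ball of radius $\varepsilon_n$ meets $A_n$ in at most one point.

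Next I would record the consequence for elements of the inverse limit. Let $\{C_n\}\in\mathcal{X}$ and put $x=\varphi(\{C_n\})=\lim_H\{C_n\}$. By Remark \ref{distancialimite} we have $\distH{x}{C_n}<\varepsilon_n$ for every $\todon$, hence $C_n\subset\ball{x}{\varepsilon_n}$. On the other hand $C_n$ is a non-empty subset of $A_n$, so $C_n\subset A_n\cap\ball{x}{\varepsilon_n}$. Now the rigidity property above shows that $A_n\cap\ball{x}{\varepsilon_n}$ is a single point: it is non-empty because $A_n$ is an $\varepsilon_n$-approximation, and if $a,a'\in A_n\cap\ball{x}{\varepsilon_n}$ then $\dist{a}{a'}\leqslant\max\conjunto{\dist{a}{x},\dist{a'}{x}}<\varepsilon_n$, forcing $a=a'$. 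Consequently $C_n$ must equal that unique point, and therefore $C_n$ is completely determined by $x$.

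From here the conclusion is immediate. If $\{C_n\},\{C'_n\}\in\mathcal{X}$ have the same image $x$ under $\varphi$, the previous paragraph gives $C_n=C'_n$ for every $\todon$, so the two sequences coincide and $\varphi$ is injective. Combined with the surjectivity of $\varphi$ and the relation $\varphi\cdot\phi=1_X$ (both established in the proof of Theorem \ref{teo:limitefinito}), injectivity yields $\mathcal{X}=\mathcal{X}^*$, so $\varphi:\mathcal{X}\rightarrow X$ is a continuous bijection from a compact space onto a Hausdorff space and hence a homeomorphism. This exhibits the desired \textsc{fas} with inverse limit homeomorphic to $X$.

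I expect the only genuinely delicate point to be the singleton computation $A_n\cap\ball{x}{\varepsilon_n}=\conjunto{a_n}$, which is exactly where the ultrametric hypothesis is used, via the preceding lemma; the construction of the \textsc{fas} and the passage from injectivity to a homeomorphism are routine given the results already proved.
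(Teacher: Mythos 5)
Your proof is correct, and it reaches the conclusion by a genuinely different route than the paper's. Both arguments start from the same place --- a \textsc{fas} whose approximations $A_n$ have pairwise disjoint $\varepsilon_n$-balls, as provided by the preceding lemma --- and both exploit the same ultrametric rigidity (two points of $A_n$ lying within $\varepsilon_n$ of a common point must coincide). The difference is in the assembly. The paper shows that the nearby maps $q_{A_n}:X\rightarrow A_n$ become single-valued continuous surjections, checks the commutativity $q_{A_n}=p_{n,n+1}\cdot q_{A_{n+1}}$, and then invokes an external result (corollary 3 on page 61 of \cite{MSshape}: surjective maps distinguishing points induce the limit) to conclude $X=\varprojlim(A_n,p_{n,n+1})$; only afterwards does it identify $\mathcal{X}$ with this limit by observing that each coordinate $C_n$ of a point of $\mathcal{X}$ is a singleton. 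You bypass the induced sequence of discrete spaces and the external citation entirely: your singleton computation $A_n\cap\ball{x}{\varepsilon_n}=\conjunto{a_n}$, combined with Remark \ref{distancialimite}, shows each $C_n$ is determined by $x=\varphi(\{C_n\})$, hence $\varphi$ is injective, and Theorem \ref{teo:limitefinito} (via $\varphi\cdot\phi=1_X$, which forces $\mathcal{X}=\mathcal{X}^*$) finishes the proof. Your version is more self-contained relative to the paper's own results; one remark on your last step: compactness of $\mathcal{X}$ is not immediate from Tychonoff, since an inverse limit of non-Hausdorff spaces need not be closed in the compact product, but in your argument it is legitimately inherited from the identification $\mathcal{X}=\mathcal{X}^*\cong X$, so the compact-to-Hausdorff bijection argument is sound (alternatively, $\varphi$ is then just $\phi^{-1}$, already known to be a homeomorphism). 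What the paper's longer route buys is an extra piece of information your argument does not directly produce: $X$ is exhibited explicitly as the inverse limit of the discrete finite spaces $A_n$ themselves, with single-valued bonding maps $p_{n,n+1}\mid_{A_{n+1}}$, rather than of the hyperspace-based spaces $\U{n}$.
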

\begin{proof}
Let us consider any \textsc{fas} $\fas$ of $X$ satisfying the property stated in the previous lemma. Then, for every $x\in X$ and every $\todon$, we have that $\card(q_{A_n}(x))=1$: Let us suppose that $a_1,a_2\in q_{A_n}(x)$. Then, $\dist{x}{a_1},\dist{x}{a_2}<\gamma_n<\epsilon_n$ but, in that case, we will have that $x\in\ball{a_1}{\epsilon_n}\cap\ball{a_2}{\epsilon_n}$ which is not possible. Then, $q_{A_n}:X\rightarrow A_n$ is actually a single valued continuous map. Moreover, if we restrict to $A_{n+1}$, we obtain that $$q_{A_n}\mid_{A_{n+1}}=p_{n,n+1}\mid_{A_{n+1}}:A_{n+1}\longrightarrow A_n$$ is a continuous map. So, it makes sense to write the following diagram, 
$$\xymatrix @C=15mm{X\ar[dr]^{q_{A_n}} \ar[d]_{q_{A_{n+1}}} & \\
A_{n+1}\ar[r]_{p_{n+1,n}} & A_n,}$$ which, moreover, is commutative. If not, then there would exist $a_1,a_2\in A_n$ with $q_{A_n}(x)=a_1$ and $p_{n,n+1}q_{A_{n+1}}(x)=a_2$. Clearly, $\dist{x}{a_1}<\epsilon_n$, but also 
\begin{eqnarray*}\dist{x}{a_2} & \leqslant & \dist{x}{q_{A_{n+1}}(x)}+\dist{q_{A_{n+1}}(x)}{p_{n,n+1}q_{A_{n+1}}(x)}<\\ & < & \gamma_{n+1}+\gamma_n<\epsilon_{n+1}+\gamma_n<\frac{\varepsilon_n-\gamma_n}{2}+\gamma_n<\epsilon_n.
\end{eqnarray*} 
and this is imposible, since then $x\in\ball{a_1}{\epsilon_n}\cap\ball{a_2}{\epsilon_n}$. Adding that $q_{A_n}$ is always a surjective map distinguishing points of $X$ (see corollary 3 on page 61 of \cite{MSshape}), we have that $X$ is the inverse limit $X=\lim_{\leftarrow}(A_n,p_{n,n+1})$. Now, it remains to see that every element of the inverse limit $C=(C_1,C_2,\ldots,C_n,\ldots)\in\mathcal{X}$ satisfies that $\card({C_n})=1$ for every $\todon$. If not, for any pair $a_1,a_2\in C_n$ we would have that $\dist{x}{a_1},\dist{x}{a_2}<\epsilon$, with $x=\lim_H\{C_n\}$, which, again, is not possible. So, we have that $$\mathcal{X}=\lim_{\leftarrow}\left(U_{2\epsilon_n}(A_n),p_{n,n+1}\right)=\lim_{\leftarrow}(A_n,p_{n,n+1})=X\qedhere$$
\end{proof}
\subsection{Previous work on inverse limits of finite spaces}
Our construction is a sequence of finite spaces, which, in the limit, are able to reflect topological properties of the original space. Its main features are: 
\begin{enumerate}
\item[$\cdot$] It is internal: It is constructed from the space itself, without need of external ambient spaces to approximate them. We use the hyperspace, which is constructed just in terms of the compac metric space.
\item[$\cdot$] It is constructive: Given a space explicitly, we can actually select points for each approximation. This is important, since it allows us to perform explictly the construction over the space, and possibly determine some topological structure, up to some error.  
\end{enumerate}
There exist previous results on the approximation of topological spaces by finite spaces. This is an old theme, but nowadays it is becoming more important because of its role in the emerging field of computational topology. In this section we will review some of these results and compare them with ours.

\paragraph{Approximation of compact polyhedra}
There is a paper of E. Clader \cite{Cinverse} where the following theorem is proved:
\begin{teo}[E. Clader]\label{teo:clader}
Every compact polyhedron is homotopy equivalent to the inverse limit of an inverse sequence of finite spaces.
\end{teo}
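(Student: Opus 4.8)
The plan is to obtain this statement as an immediate specialization of the machinery already developed, since the heavy lifting has been done in Theorem \ref{teo:limitefinito} and its Corollary \ref{teo:homotopiafinitos}. The one conceptual observation needed is that a compact polyhedron is, in particular, a compact metric space: if $X=|K|$ is the geometric realization of a finite simplicial complex embedded in some Euclidean space $\mathbb{R}^N$, then $X$ is closed and bounded, hence compact, and it inherits the Euclidean metric, making it a compact metric space to which the entire Main Construction applies.

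First I would invoke Theorem \ref{teo:mainconstruction} to produce a \textsc{fas} $\{\U{n},p_{n,n+1}\}$ of the compact metric space $X$. By construction each $\U{n}$ is a finite $T_0$ space, so this \textsc{fas} is precisely an inverse sequence of finite spaces, as required by the statement. Next I would apply Theorem \ref{teo:limitefinito} to this \textsc{fas}: its inverse limit $\mathcal{X}=\varprojlim\{\U{n},p_{n,n+1}\}$ contains a subspace $\mathcal{X}^*$ homeomorphic to $X$ which is a strong deformation retract of $\mathcal{X}$. Since a strong deformation retract is in particular a homotopy equivalence, the inclusion $\mathcal{X}^*\hookrightarrow\mathcal{X}$ together with the homeomorphism $X\cong\mathcal{X}^*$ yields that $X$ is homotopy equivalent to $\mathcal{X}$, the inverse limit of an inverse sequence of finite spaces. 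This is exactly the content of Corollary \ref{teo:homotopiafinitos} read off for the special case in which the ambient compact metric space happens to be a polyhedron.

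There is essentially no obstacle in this argument beyond the two trivial verifications above, which is precisely why the introduction announces that Clader's result can be recovered \emph{as a corollary} of the main theorem. If one wished to match Clader's formulation more literally (homotopy type realized by an explicit inverse system), one could additionally remark that the \textsc{pas} obtained by applying the Alexandroff--McCord correspondence of Theorem \ref{teo:AMcorrespondence} to each $\U{n}$ gives a compatible inverse sequence of polyhedra whose associated finite spaces are the $\U{n}$, but this is not needed: the statement as quoted asks only for finite spaces, so the single application of Theorem \ref{teo:limitefinito} already suffices. The only point demanding any care at all is the passage from ``strong deformation retract'' to ``homotopy equivalent,'' which is standard.
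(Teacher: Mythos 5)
Your proposal is correct and coincides with the paper's own derivation: the paper likewise notes that every compact polyhedron is a compact metric space and then obtains Clader's theorem as a particular case of Corollary \ref{teo:homotopiafinitos}, i.e.\ of Theorem \ref{teo:limitefinito} applied to any \textsc{fas} of the polyhedron. The extra remarks you make (the \textsc{pas} via the Alexandroff--McCord correspondence, and the passage from strong deformation retract to homotopy equivalence) are consistent with, but not needed for, the paper's argument.
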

The proof consists of taking as finite spaces the vertices of the barycentric subdivisions of the simplicial complexes defining the compact polyhedron. Given a simplicial complex, the McCord correspondence assigns a finite $T_0$ space. Clader assigned the opposite topology to these finite spaces. That is, consider for every $\todon$, the $n$-th barycentric subdivision $K^{(n)}$ and the finite space $F_n=\mathcal{X}(K^{n})^{op}$, that is, the $n$-th barycentric subdivision of the finite space $\mathcal{X}(K)$ with the opposite topology of that assigned by McCord. Then, there is a natural map $p_n$ from $|K|$ to each $F_n$, because every point of $|K|$ belongs to a unique simplex of $K^{(n)}$. For every $n>1$, there is a map $q_n:F_n\rightarrow F_{n-1}$ closing the diagram with $p_n$ and $p_{n-1}$. Then, it is shown that the polyhedron is a retract of the inverse limit of these finite spaces and maps. 

Note that every compact polyhedron is a compact metric space (for details of the metric, see, for example, the appendix on polyhedra of \cite{MSshape}). So, this theorem is a particular case of corollary \ref{teo:homotopiafinitos}. 

 
\paragraph{Finite approximations and Hausdorff reflections}
In a series of papers, R. Kopperman \emph{et al.} (\cite{KTWthe}, \cite{KWfinite}) proved the following theorem about finite approximations.
\begin{teo} [R. Kopperman, R. Wilson]\label{teo:koppwil} Every compact Hausdorff space is the Hausdorff reflection of the inverse limit of an inverse system of finite spaces.
\end{teo}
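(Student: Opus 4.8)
The plan is to realize $Y$ as the Hausdorff reflection of the limit of the natural inverse system of finite $T_0$ spaces indexed by finite open covers. Let $\Lambda$ be the set of finite open covers of $Y$, directed by inclusion (the union of two finite open covers is again one). For $\mathcal{U}=\{U_1,\dots,U_n\}\in\Lambda$ consider the map $\chi_{\mathcal{U}}\colon Y\to\{0,1\}^{\mathcal{U}}$ sending $x$ to its vector of membership indicators, where each factor $\{0,1\}$ carries the Sierpi\'nski topology with $\{1\}$ open. Since the $\chi_{\mathcal{U}}$-preimage of the open set ``$i$-th coordinate $=1$'' is exactly $U_i$, the map $\chi_{\mathcal{U}}$ is continuous; I set $X_{\mathcal{U}}=\chi_{\mathcal{U}}(Y)$, a finite $T_0$ space, and $p_{\mathcal{U}}\colon Y\to X_{\mathcal{U}}$ the corestriction (a continuous surjection). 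When $\mathcal{U}\subseteq\mathcal{V}$ the coordinate projection $\{0,1\}^{\mathcal{V}}\to\{0,1\}^{\mathcal{U}}$ restricts to a continuous bonding map $p_{\mathcal{U}\mathcal{V}}\colon X_{\mathcal{V}}\to X_{\mathcal{U}}$ with $p_{\mathcal{U}\mathcal{V}}\circ p_{\mathcal{V}}=p_{\mathcal{U}}$, giving an inverse system of finite spaces. Write $L=\varprojlim_{\mathcal{U}}X_{\mathcal{U}}$, a compact space (a limit of finite spaces), and let $p\colon Y\to L$ be induced by the $p_{\mathcal{U}}$. As $Y$ is $T_1$, distinct points are separated by an open set, so $p$ is injective.

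Next I would construct a continuous surjection $\pi\colon L\to Y$ splitting $p$. For $\ell\in L$ and $\mathcal{U}\in\Lambda$, let $S_{\mathcal{U}}(\ell)\subseteq\mathcal{U}$ be the members on which the coordinate of $\chi_{\mathcal{U}}(\ell)$ equals $1$, and put $F_{\mathcal{U}}(\ell)=p_{\mathcal{U}}^{-1}(\chi_{\mathcal{U}}(\ell))\subseteq Y$, a nonempty set. Compatibility of $\ell$ gives $F_{\mathcal{V}}(\ell)\subseteq F_{\mathcal{U}}(\ell)$ whenever $\mathcal{U}\subseteq\mathcal{V}$, so $\{\overline{F_{\mathcal{U}}(\ell)}\}_{\mathcal{U}}$ is a directed family of nonempty closed subsets of the compact space $Y$ and $K_{\ell}:=\bigcap_{\mathcal{U}}\overline{F_{\mathcal{U}}(\ell)}\neq\emptyset$. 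The crucial point is that $K_{\ell}$ is a \emph{singleton}: if $y_1\neq y_2$ both lay in $K_\ell$, then, using normality of the compact Hausdorff space $Y$, one separates them by a two-element cover $\mathcal{U}=\{A,B\}$ with $A=Y\setminus\overline{U_2}$, $B=Y\setminus\overline{U_1}$ for disjoint closed neighborhoods $\overline{U_1}\ni y_1$, $\overline{U_2}\ni y_2$; a short computation shows that every region of $\mathcal{U}$ has closure missing $y_1$ or missing $y_2$, contradicting $y_1,y_2\in\overline{F_{\mathcal{U}}(\ell)}$. I define $\pi(\ell)$ to be the unique point of $K_{\ell}$; the same separation argument applied to $p(x)$ gives $K_{p(x)}=\{x\}$, whence $\pi\circ p=\mathrm{id}_Y$.

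The continuity of $\pi$ is the step I expect to be the main obstacle, and it is precisely where compactness carries the argument. Given $\ell$ with $\pi(\ell)=y$ and an open $O\ni y$, the decreasing directed family $\{\overline{F_{\mathcal{U}}(\ell)}\}$ has intersection $\{y\}\subseteq O$, so by compactness $\overline{F_{\mathcal{U}_0}(\ell)}\subseteq O$ for some $\mathcal{U}_0$. Taking the $\chi_{\mathcal{U}_0}$-preimage of the minimal open neighborhood of $\chi_{\mathcal{U}_0}(\ell)$ in the finite space $X_{\mathcal{U}_0}$ produces an open $N\ni\ell$ in $L$, and for $\ell'\in N$ one has $\pi(\ell')\in\overline{F_{\mathcal{U}_0}(\ell')}\subseteq\overline{F_{\mathcal{U}_0}(\ell)}\subseteq O$. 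Establishing the middle inclusion amounts to comparing the regions of $\mathcal{U}_0$ through the specialization order of $X_{\mathcal{U}_0}$, which is the delicate bookkeeping I would have to do carefully.

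Finally I would identify the reflection. Fix $\ell$ and $y=\pi(\ell)$. For every $\mathcal{U}$ and every $U\in\mathcal{U}\setminus S_{\mathcal{U}}(\ell)$ one has $F_{\mathcal{U}}(\ell)\subseteq Y\setminus U$, a closed set, so $y\in\overline{F_{\mathcal{U}}(\ell)}\subseteq Y\setminus U$; hence the members of $\mathcal{U}$ containing $y$ satisfy $S_{\mathcal{U}}(y)\subseteq S_{\mathcal{U}}(\ell)$ for all $\mathcal{U}$. In specialization terms this reads $\chi_{\mathcal{U}}(p(y))\in\overline{\{\chi_{\mathcal{U}}(\ell)\}}$ for every $\mathcal{U}$, i.e. $p(\pi(\ell))\in\overline{\{\ell\}}$ in $L$. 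Because any continuous map from $L$ to a Hausdorff (hence $T_1$) space agrees on a point and on any point of its closure, $\ell$ and $p(\pi(\ell))$ lie in the same class of the Hausdorff reflection $\rho\colon L\to H(L)$. Consequently $\rho\circ p$ is surjective, since $\rho(\ell)=\rho(p(\pi(\ell)))=(\rho p)(\pi(\ell))$; and it is injective because $\pi$ is continuous into the Hausdorff space $Y$ and so factors through $\rho$, forcing $(\rho p)(x)=(\rho p)(x')$ to give $x=\pi p(x)=\pi p(x')=x'$. Thus $\rho\circ p\colon Y\to H(L)$ is a continuous bijection from a compact space onto a Hausdorff space, hence a homeomorphism, and $H(L)\cong Y$, as desired.
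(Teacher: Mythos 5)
Your architecture is essentially the Kopperman--Wilson construction that the paper only cites: note that the paper gives \emph{no} proof of Theorem \ref{teo:koppwil} (it quotes \cite{KTWthe,KWfinite} and merely remarks that the finite spaces come from Boolean algebras generated by open coverings), so your indicator maps $\chi_{\mathcal{U}}$ into products of Sierpi\'nski spaces, indexed by the directed set of finite open covers, are a faithful rendering of that idea. Most of your steps are sound: the inverse system and $p$ are well defined, $p$ is injective, $K_{\ell}$ is a nonempty singleton (your two-set cover argument is correct), $\pi\circ p=\mathrm{id}_Y$, and the reflection argument at the end is valid \emph{provided} $\pi$ is continuous. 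The genuine gap is exactly the step you flagged, and it cannot be repaired by bookkeeping at the fixed cover $\mathcal{U}_0$: the middle inclusion $\overline{F_{\mathcal{U}_0}(\ell')}\subseteq\overline{F_{\mathcal{U}_0}(\ell)}$ for $\ell'\in N$ is simply false. Concretely, take $Y=[0,1]$, $\mathcal{U}_0=\{[0,1),(0,1]\}$, $\ell=p(0)$, $\ell'=p(1/2)$, $O=[0,1/4)$. Then $\overline{F_{\mathcal{U}_0}(\ell)}=\{0\}\subseteq O$, and $\chi_{\mathcal{U}_0}(1/2)=(1,1)$ lies in the minimal open neighborhood of $\chi_{\mathcal{U}_0}(0)=(1,0)$, so $\ell'\in N$; yet $\overline{F_{\mathcal{U}_0}(\ell')}=[0,1]$ and $\pi(\ell')=1/2\notin O$. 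The reason is structural: moving up the specialization order adds members to $S_{\mathcal{U}_0}$, but it \emph{deletes} the constraints ``disjoint from $U$'' for the members $U\notin S_{\mathcal{U}_0}(\ell)$, and it was precisely those constraints that kept $F_{\mathcal{U}_0}(\ell)$ small.

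The repair requires passing to a strictly finer cover, and this is where normality of the compact Hausdorff space $Y$ must enter (your proposal invokes normality only in the singleton lemma). Given $\overline{F_{\mathcal{U}_0}(\ell)}\subseteq O$, choose by normality an open $V$ with $\overline{F_{\mathcal{U}_0}(\ell)}\subseteq V\subseteq\overline{V}\subseteq O$, and set $\mathcal{W}=\mathcal{U}_0\cup\{V\}$. Since $\ell_{\mathcal{W}}\in X_{\mathcal{W}}=\chi_{\mathcal{W}}(Y)$, the fiber $F_{\mathcal{W}}(\ell)$ is nonempty; if $V\notin S_{\mathcal{W}}(\ell)$, then $F_{\mathcal{W}}(\ell)\subseteq F_{\mathcal{U}_0}(\ell)\setminus V=\emptyset$, a contradiction, so $V\in S_{\mathcal{W}}(\ell)$. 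Consequently, for every $\ell'$ whose $\mathcal{W}$-coordinate lies in the minimal open neighborhood of $\ell_{\mathcal{W}}$ one gets $V\in S_{\mathcal{W}}(\ell')$, hence $F_{\mathcal{W}}(\ell')\subseteq V$ and $\pi(\ell')\in\overline{F_{\mathcal{W}}(\ell')}\subseteq\overline{V}\subseteq O$. The mechanism your version misses is that nonemptiness of fibers forces an arbitrary thread to ``accept'' any new cover member that swallows its current fiber; with this paragraph substituted for your continuity argument, the rest of your proof goes through.
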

The finite spaces involved in this proof are constructed in a very theoretical way. It is considered the set of all possible open coverings of the space and then the spaces are defined with a boolean algebra on the open sets of the coverings. This theorem has the advantage that it is very general: It works for any compact Hausdorff space, with no need for a metric. But it has the desadvantage that it is not always explicitly constructible and that we loose a lot of intuition with the Hausdorff reflection.

The idea of a reflection of a topological space is to construct another space, as similar as posible to the first one with an extra separation property and a universal map. Somehow it is similar to compactification. Concretely, given a topological space $X$ and a separation property $T$, we will say that $\mu_X:X\rightarrow X_T$ is the $T$ \emph{reflection} of $X$ if
$\mu_X$ is a continuous map, $X_T$ has the property $T$ and every continuous map $f:X\rightarrow Z$ with $Z$ having property $T$, factors through a map $g:X_T\rightarrow Z$, i.e., the diagram
$$\xymatrix @C=20mm{X \ar[d]_{\mu_X}\ar[r]^{f}& Z \\
X_T\ar[ur]_{g}}$$ commutes. If the map $\mu_X$ is surjective we will say that the reflection is surjective, too. For some properties, the existence of reflectors is a well known fact.
\begin{teo}(see \cite{MNtopics}, chapter 14)Let $X$ be a topological space. For $T$ being the separation properties $T_i,\enspace i=0,1,2,3,3\frac{1}{2}$ there exists a surjective reflection.
\end{teo}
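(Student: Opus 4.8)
The plan is to obtain the $T$-reflection uniformly, for each of the listed properties, as the image of $X$ under a canonical evaluation map into a product of $T$-spaces. The whole argument rests on the fact that, for $T\in\{T_0,T_1,T_2,T_3,T_{3\frac{1}{2}}\}$, the class of $T$-spaces is closed under the formation of \emph{subspaces} and of arbitrary \emph{topological products}: every subspace of a $T$-space is a $T$-space, and every product of $T$-spaces is a $T$-space. (For $i=0,1,2$ this is standard; regularity and complete regularity are likewise both hereditary and productive.) These two stability properties are exactly what makes the full subcategory of $T$-spaces epireflective, and the construction below is the explicit realization of that epireflection.

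First I would deal with the size problem, i.e. guarantee that only a \emph{set} (not a proper class) of test maps is needed. If $f:X\rightarrow Z$ is a continuous surjection onto a $T$-space then $|Z|\leqslant|X|$, and there are at most $2^{2^{|X|}}$ topologies on a fixed set of cardinality at most $|X|$; hence, up to homeomorphism over $X$, there is only a set of such quotient maps. Choose one representative $f_\alpha:X\rightarrow Z_\alpha$ for each, indexed by a set $I$.

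Next I would form the evaluation map $e:X\rightarrow P:=\prod_{\alpha\in I}Z_\alpha$, $e(x)=(f_\alpha(x))_{\alpha\in I}$, which is continuous. By productivity $P$ is a $T$-space, and by heredity the image $X_T:=e(X)$, endowed with the subspace topology, is again a $T$-space. Define $\mu_X:X\rightarrow X_T$ to be $e$ corestricted to its image; it is continuous and, by construction, \emph{surjective}. It remains to check the universal property. Let $g:X\rightarrow Z$ be continuous with $Z$ a $T$-space, and factor it as $X\xrightarrow{\bar g}g(X)\hookrightarrow Z$, where $g(X)$ carries the subspace topology and is therefore a $T$-space. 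The surjection $\bar g$ is homeomorphic (over $X$) to one of the chosen representatives $f_{\alpha_0}$, so whenever $e(x)=e(x')$ we have $f_{\alpha_0}(x)=f_{\alpha_0}(x')$, hence $\bar g(x)=\bar g(x')$ and finally $g(x)=g(x')$. Consequently $g$ descends to a well-defined map $X_T\rightarrow Z$; it is continuous because it coincides with the composite of the projection $\pi_{\alpha_0}|_{X_T}:X_T\rightarrow Z_{\alpha_0}$ with the homeomorphism $Z_{\alpha_0}\cong g(X)$ and the inclusion into $Z$. Since $\mu_X$ is surjective, this factorization is unique, so $\mu_X$ is the required surjective $T$-reflection.

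The main obstacle is the size (solution-set) step: one must be sure that the test maps into $T$-spaces can be replaced by a genuine set and that the chosen representatives already encode every identification forced by a map into a $T$-space, so that the evaluation $e$ has exactly the right fibers. A secondary but real point of care is fixing the convention for $T_3$ and $T_{3\frac{1}{2}}$ (whether regularity and complete regularity are bundled with $T_0$ or $T_1$), since this decides which targets $Z$ are admissible; once the conventions are pinned down, the hereditary and productive properties used above hold verbatim and the argument is uniform across all five cases. (As a sanity check, the $T_0$ case reduces to the familiar Kolmogorov quotient by topological indistinguishability, which is visibly surjective and universal.)
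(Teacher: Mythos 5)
Your proposal is correct, but note that the paper itself contains no proof of this statement: it is quoted as a known result with a citation to \cite{MNtopics}, so there is nothing internal to compare against. What you give is the standard construction of epireflections for a class of spaces closed under subspaces and arbitrary products, and it works exactly as written: the cardinality bound $|Z|\leqslant|X|$ for continuous surjective images, together with the bound $2^{2^{|X|}}$ on topologies, legitimizes the solution set $I$; productivity and heredity give that $X_T=e(X)$ has property $T$; and surjectivity of $\mu_X$ yields uniqueness of the factorization (which, incidentally, the paper's definition of reflection does not even demand). The two points you flag as delicate are handled correctly: the claim that $\bar g$ is homeomorphic over $X$ to some chosen $f_{\alpha_0}$ is precisely what the choice of representatives guarantees, since $g(X)$ with the subspace topology is a $T$-space by heredity and $\bar g$ is a continuous surjection onto it; and the convention issue for $T_3$ and $T_{3\frac{1}{2}}$ is immaterial because regularity and complete regularity, with or without $T_0$ or $T_1$ adjoined, are all hereditary and productive, so the argument is uniform across the five cases. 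This closure requirement also explains why the statement stops at $T_{3\frac{1}{2}}$: normality is neither hereditary nor productive, so $T_4$ admits no such construction. Your sanity check is apt as well, since for $T_0$ the construction collapses to the Kolmogorov quotient, and for $T_2$ it produces exactly the reflection $X_H=X/R_3$ that the paper later describes via the relations $R_1,R_2,R_3$.
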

It is easy to see that two reflections of the same space are homeomorphic.

In many cases, reflections are obtained as quotient spaces (not in every case, as for example the Tychonoff functor -or reflection- which is not obtained as a quotient) for a relation. Nevertheless, it is not allways obtained as the obvious relation. As a matter of fact, in order to obtain the Hausdorff reflection we need to define the following relations (see the reference \cite{SBMa}, a short and beautiful paper about reflections, where this is shown):
\begin{itemize}
\item $x R_1 y$ iff for every pair of neighborhoods $U_x, U_y$ of $x, y$ resp., we have $U_x\bigcap U_y\neq\emptyset$.
\item $x R_2 y$ iff there exist $x=z_1,z_2,\ldots,z_n=y$ such that $z_1R_1z_2R_1\ldots R_1z_n$.
\item $x R_3 y$ iff for every $f:X\rightarrow Z$, with $Z$ Hausdorff, we have $f(x)=f(y)$.
\end{itemize}
Then, the Hausdorff reflection of $X$ is the quotient space $X_H=X/R_3$.

We want to compare the Hausdorff reflection of a topological space with the space itself in terms of shape type. As a motivation, we can cite \cite{Mon}, where it is shown that the Tychonoff functor indeed induces the identity morphism in shape. So, a topological space and its Tychonoff reflection have the same shape. We will show the same holds for the Hausdorff reflection. 
\begin{lem}
The Hausdorff reflection of the product $X\times I$, where $I=[0,1]$, is homeomorphic to $X_H\times I$.
\end{lem}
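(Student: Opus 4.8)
The plan is to exhibit $X_H\times I$, together with the map $\mu_X\times\mathrm{id}_I\colon X\times I\to X_H\times I$, as a Hausdorff reflection of $X\times I$, and then to invoke the fact (noted just above) that any two reflections of the same space are homeomorphic. Thus I would verify the three defining properties of a reflection for this candidate: that $\mu_X\times\mathrm{id}_I$ is continuous, that $X_H\times I$ is Hausdorff, and that every continuous map from $X\times I$ into a Hausdorff space factors uniquely through it.

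The first two properties are immediate: $\mu_X\times\mathrm{id}_I$ is a product of continuous maps, and a product of Hausdorff spaces is Hausdorff. The key structural observation, which makes the universal property tractable, is that $q:=\mu_X\times\mathrm{id}_I$ is a \emph{quotient} map. Indeed, $\mu_X\colon X\to X_H=X/R_3$ is a quotient map by construction, and $I=[0,1]$ is compact Hausdorff, hence locally compact Hausdorff; so by Whitehead's theorem (the product of a quotient map with the identity of a locally compact Hausdorff space is again a quotient map) $q$ is a quotient map. Moreover, since the Hausdorff reflection is surjective (by the reflection existence theorem quoted above, applied to $T_2$), the map $q$ is surjective as well.

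With $q$ a surjective quotient map, the factorization reduces to a fiberwise computation, and here it is useful to record that the relevant identification relation on $X\times I$ is $R_3^X\times\Delta_I$, with $\Delta_I$ the equality relation on $I$; equivalently, the fibers of $q$ are exactly the sets $\mu_X^{-1}([x])\times\{t\}=[x]_{R_3}\times\{t\}$, where $[x]_{R_3}$ denotes the $R_3$-class of $x$. Given any continuous $f\colon X\times I\to Z$ with $Z$ Hausdorff, I would show $f$ is constant on each fiber: fixing $t$, the map $f(\cdot,t)\colon X\to Z$ is continuous into a Hausdorff space, so whenever $x\,R_3\,x'$ the definition of $R_3$ forces $f(x,t)=f(x',t)$. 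Hence $f$ is constant on the fibers of $q$, and because $q$ is a quotient map, $f$ factors through a \emph{continuous} map $g\colon X_H\times I\to Z$ with $g\circ q=f$; surjectivity of $q$ makes $g$ unique. This is precisely the universal property, so $X_H\times I$ is a Hausdorff reflection of $X\times I$, and the asserted homeomorphism follows.

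The step I expect to be the main obstacle is establishing the continuity of the factored map $g$. Checking that $f$ is constant on fibers and that the identification relation on $X\times I$ is $R_3^X\times\Delta_I$ is routine, but continuity of $g$ does \emph{not} follow from constancy on fibers for an arbitrary continuous surjection; it is exactly at this point that one needs $q$ to be a quotient map. Therefore the crux of the whole argument is the appeal to Whitehead's theorem, which in turn relies essentially on $I$ being (locally) compact Hausdorff.
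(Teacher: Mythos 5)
Your proof is correct, but it takes a genuinely different route from the paper. You verify directly that $X_H\times I$, equipped with $q=\mu_X\times\mathrm{id}_I$, satisfies the defining universal property of the Hausdorff reflection of $X\times I$ (continuity, Hausdorffness of the target, and continuous factorization of any map into a Hausdorff space, obtained from constancy of $f$ on the fibers $[x]_{R_3}\times\{t\}$ together with $q$ being a quotient map), and then you conclude by uniqueness of reflections. The paper instead goes the other way around: it applies the universal property of $(X\times I)_H$ to the map $f=\mu_X\times\mathrm{id}_I$ to obtain a canonical comparison map $h\colon (X\times I)_H\to X_H\times I$, and then proves $h$ is a homeomorphism by showing it is a quotient map (since $f$ and $\mu_{X\times I}$ are quotient maps, citing Engelking) and injective, the injectivity requiring an auxiliary commutative square built from the slice inclusion $\mathrm{id}\times t\colon X\to X\times I$. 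Both arguments hinge on exactly the same key fact, which you correctly isolate as the crux: $\mu_X\times\mathrm{id}_I$ is a quotient map because $I$ is locally compact Hausdorff (Whitehead's theorem); the paper asserts this without naming or justifying it, so your treatment is actually more careful on this point. What each approach buys: yours is the cleaner, more categorical argument, avoiding the fiberwise injectivity computation entirely (and note you could even avoid invoking the explicit relation $R_3$ by factoring each slice $f(\cdot,t)$ through $\mu_X$ via the universal property of $X_H$); the paper's is more concrete, producing the homeomorphism $h$ explicitly and never needing to identify the fibers of $q$ or the identification relation on $X\times I$.
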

\begin{proof}
Consider the continuous map $$\begin{array}{rcl}f:X\times I&
\longrightarrow &X_H\times I\nonumber\\(x,t)&\longmapsto &
(\mu_X(x),t),\nonumber\end{array}$$ which is a quotient map. Moreover, the space $X_H\times I$ is Hausdorff, so there exists a continuous surjective map $h:(X\times I)_H\rightarrow X_H\times I$
such that the diagram
$$\xymatrix @C=20mm{X\times I\ar[d]_{\mu_{X\times I}}\ar[r]^{f}& X_H\times I \\
(X\times I)_H\ar[ur]_{h}}$$ commutes. We see that $h$ is actually a homeomorphism. First of all, $h$ is a quotient map, because $f$ and $\mu_{X\times
I}$ are (\cite{Egeneral}, pag 91). Also, it is an injective map. Indeed, let
$[a],[b]\in(X\times I)_H$ such that $h([a])=h([b])=([z],t)$. Considering that
$\mu_{X\times I}$ is surjective, there exist $(x,t_1),(x,t_2)$ such that
$\mu_{X\times I}(x,t_1)=[a]$ and $\mu_{X\times I}(y,t_2)=[b]$. Because of the commutativity of the previous diagram we have that
\begin{eqnarray}
([x],t_1)=f(x,t_1)=h(\mu_{X\times
I}(x,t_1))=h([a])=([z],t)\nonumber\\
([y],t_2)=f(y,t_2)=h(\mu_{X\times I}(y,t_2))=h([b])=([z],t)\nonumber
\end{eqnarray}
so $[x]=[y]=[z]$ and $t_1=t_2=t$. For this concrete $t$,
we consider the commutative diagram $$\xymatrix @C=20mm{X\ar[r]^{id\times t}\ar[d]_{\mu_X}& X\times I\ar[d]^{\mu_{X\times I}} \\
X_H\ar[r]_g & (X\times I)_H},$$ which exists for being $\mu_{X\times
I}\circ(id\times t):X\rightarrow(X\times I)_H$ a continuous map to a Hausdorff space. We consider the images of $x,y$ by the two different maps of the diagram.
As $[x]=[y]$, we obtain that
$[a]=[b]$, so $h$ is injective. A quotient and injective map is a homeomorphism$\qedhere$
\end{proof} 
\begin{teo}
For every topological space $X$, the Hausdorff reflection $\mu_X:X\rightarrow X_H$ induces the identity morphism in shape.
\end{teo}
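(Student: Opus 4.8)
The plan is to invoke the characterization of shape equivalences from Theorem \ref{teo:funindiso}. Since every polyhedron (equivalently CW-complex or ANR) $P$ is Hausdorff, the goal reduces to showing that for each such $P$ the induced function
$$\mu_X^{\ast}:[X_H,P]\longrightarrow[X,P],\qquad [g]\longmapsto[g\circ\mu_X],$$
is a bijection; by that theorem this is exactly what it means for $\mu_X$ to be a shape equivalence, so that $X$ and $X_H$ share the same shape and $\mu_X$ induces an isomorphism (the identity) in shape.

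For surjectivity I would use the defining universal property of the Hausdorff reflection directly. Given any continuous $f:X\to P$, the target $P$ is Hausdorff, so $f$ factors as $f=g\circ\mu_X$ for a unique continuous $g:X_H\to P$; hence $[f]=\mu_X^{\ast}([g])$ and $\mu_X^{\ast}$ is onto, even on the nose at the level of maps rather than homotopy classes.

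The injectivity is where the preceding lemma enters, and this is the step I expect to be the crux. Suppose $g_0,g_1:X_H\to P$ satisfy $g_0\circ\mu_X\simeq g_1\circ\mu_X$, witnessed by a homotopy $F:X\times I\to P$. Because $P$ is Hausdorff, $F$ factors through the Hausdorff reflection via a continuous map $\tilde F:(X\times I)_H\to P$ with $F=\tilde F\circ\mu_{X\times I}$. By the preceding lemma there is a homeomorphism $h:(X\times I)_H\to X_H\times I$ satisfying $h\circ\mu_{X\times I}(x,t)=(\mu_X(x),t)$, so setting $G=\tilde F\circ h^{-1}:X_H\times I\to P$ gives, for all $(x,t)$,
$$G(\mu_X(x),t)=\tilde F\bigl(h^{-1}(\mu_X(x),t)\bigr)=\tilde F\bigl(\mu_{X\times I}(x,t)\bigr)=F(x,t).$$
Since $\mu_X$ is surjective, evaluating at $t=0$ and $t=1$ forces $G(\cdot,0)=g_0$ and $G(\cdot,1)=g_1$, so $G$ is a homotopy $g_0\simeq g_1$ and $\mu_X^{\ast}$ is injective.

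Combining both halves, $\mu_X^{\ast}$ is a bijection for every polyhedron $P$, and Theorem \ref{teo:funindiso} then yields that $\mu_X$ is a shape equivalence, completing the proof. The genuine obstacle is the homotopy-lifting step in the injectivity argument: the universal property alone only transports \emph{maps} across the reflection, and it is precisely the identification $(X\times I)_H\cong X_H\times I$ from the preceding lemma that allows one to transport \emph{homotopies}. Everything else is a direct application of the universal property of $\mu_X$ together with the surjectivity of the quotient map.
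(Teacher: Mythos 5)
Your proposal is correct and follows essentially the same route as the paper: both reduce the claim via the bijectivity of $[X_H,P]\rightarrow[X,P]$ for ANRs/polyhedra $P$ (Theorem \ref{teo:funindiso}), prove surjectivity directly from the universal property of the reflection, and prove injectivity by factoring the homotopy $X\times I\rightarrow P$ through $(X\times I)_H$ and invoking the preceding lemma identifying $(X\times I)_H$ with $X_H\times I$. Your composition with $h^{-1}$ is just a repackaging of the paper's identification $\mu_{X\times I}=\mu_X\times\mathrm{id}$, so there is no substantive difference.
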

\begin{proof}

To show this, we are going to use the characterization of identity morphisms in shape\footnote{See \cite{Mshapes} for this result of shape theory}:The map $\mu_X:X\rightarrow X_H$ is the identity morphism in shape if and only if the map 
$$\begin{array}{rcl}[X_H,P]&
\longrightarrow &[X,P]\\
h&\longmapsto & h\cdot f,\\ \end{array}$$ with $P$ being any metric ANR, is bijective.

It is surjective: Given a map $g:X\rightarrow P$,
with $P$ ANR and then, Hausdorff, there exists a map
$h:X_H\rightarrow P$ such that $g=h\cdot\mu_X$, that is, what we wanted.
It is injective: Let $h_1,h_2:X_H\rightarrow P$, with $P$ ANR, two continuous maps such that $h_1\cdot\mu_X$ y $h_2\cdot\mu_X$
are homotopic, i.e., there exists a continuous map, $G:X\times
I\rightarrow P$ such that $G(x,0)=h_1\cdot\mu_X(x)$ and
$G(x,1)=h_2\cdot\mu_X(x)$. Being $P$ Hausdorff, there exists a continuous map $F:(X\times I)_H\rightarrow P$ such that
$G=F\cdot\mu_{X\times I}$. Applying the previous lemma, we get
$\mu_{X\times I}=\mu_X\times id$, so we have that the following diagram commutes:
$$\xymatrix @C=20mm{X\times I\ar[r]^G\ar[d]_{\mu_X\times id}& P\\
X_H\times I\ar[ur]_F & }.$$ Then, for every $x\in X$, we have
\begin{eqnarray}
F([x],0)=G(x,0)=h_1\cdot\mu_X(x)=h_1([x])\nonumber\\
F([x],1)=G(x,1)=h_2\cdot\mu_X(x)=h_2([x]).\nonumber
\end{eqnarray}
So, $h_1$ and $h_2$ are homotopic$\qedhere$
\end{proof} 
\begin{cor}
A topological space $X$ has the same shape than its Hausdorff reflection $X_H$.
\end{cor}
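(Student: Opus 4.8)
The plan is to read the corollary off directly from the theorem just established, together with the characterization of shape equivalences recalled in Theorem~\ref{teo:funindiso}. The preceding theorem shows that the reflection map $\mu_X:X\rightarrow X_H$ induces the identity morphism in shape; concretely, its proof verifies that for every metric ANR $P$ the induced function $[X_H,P]\rightarrow[X,P]$, $[h]\mapsto[h\cdot\mu_X]$, is a bijection. This is exactly the condition appearing in Theorem~\ref{teo:funindiso} for a continuous map to be a shape equivalence, so the real work has already been done upstream and only a formal translation remains.

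First I would apply Theorem~\ref{teo:funindiso} with $f=\mu_X$ and $Y=X_H$. Since the bijectivity of $[X_H,P]\rightarrow[X,P]$ holds for every CW-complex (equivalently ANR or polyhedron) $P$, the map $\mu_X$ is a shape equivalence; that is, the shape morphism it induces is an isomorphism in the shape category. By definition, two spaces have the same shape precisely when they are isomorphic objects in the shape category, so the existence of the shape isomorphism induced by $\mu_X$ immediately yields that $X$ and $X_H$ have the same shape.

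I do not expect any genuine obstacle here: the whole content has been absorbed into the preceding theorem, and the corollary is merely the restatement of \emph{``$\mu_X$ induces an isomorphism in shape''} as \emph{``$X$ and $X_H$ have the same shape.''} The single point requiring care is the identification of the phrase \emph{``induces the identity morphism in shape''} used in the theorem with \emph{``induces an isomorphism in the shape category''}; this is immediate, since a shape equivalence is by definition a map whose induced shape morphism is invertible, and Theorem~\ref{teo:funindiso} guarantees exactly that invertibility from the bijections on homotopy classes into ANRs.
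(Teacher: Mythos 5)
Your proposal is correct and matches the paper's own (implicit) reasoning: the corollary carries no separate proof in the paper precisely because it is the immediate restatement of the preceding theorem, whose proof establishes the bijectivity of $[X_H,P]\rightarrow[X,P]$ for every ANR $P$ and hence, via the characterization in Theorem~\ref{teo:funindiso}, that $\mu_X$ is a shape equivalence. Your single point of care --- identifying ``induces the identity morphism in shape'' with ``induces a shape isomorphism'' --- is exactly the translation the paper intends.
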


Note that with Theorem \ref{teo:koppwil} and the result just proved here about the shape of the Hausdorff reflection we will get the following generalization of Theorem \ref{teo:koppwil}.
\begin{cor}
Every compact Hausdorff space has the same shape as the inverse limit of an inverse system of finite spaces.
\end{cor}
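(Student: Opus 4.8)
The plan is to combine Theorem \ref{teo:koppwil} with the corollary just established relating a topological space to its Hausdorff reflection. First I would take an arbitrary compact Hausdorff space $X$ and invoke Theorem \ref{teo:koppwil} to obtain an inverse system of finite spaces $\{F_\alpha,q_{\alpha\beta}\}$ whose inverse limit $L=\varprojlim\{F_\alpha,q_{\alpha\beta}\}$ satisfies that its Hausdorff reflection $L_H$ is homeomorphic to $X$.

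Next I would apply the preceding corollary to the topological space $L$ itself. By the theorem proved just above for \emph{arbitrary} topological spaces, the Hausdorff reflection $\mu_L:L\rightarrow L_H$ induces the identity morphism in shape, so $L$ and $L_H$ have the same shape. Since shape type is trivially a homeomorphism invariant and $L_H\cong X$, it follows that $L$ and $X$ have the same shape. Chaining these two facts, $X$ has the same shape as $L$, which is precisely the inverse limit of an inverse system of finite spaces, and this is the assertion.

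The only point that deserves a brief check — and it is hardly an obstacle — is that the notion of Hausdorff reflection appearing in Kopperman and Wilson's theorem coincides with the one for which shape invariance was established. Since in both cases the reflection is the universal Hausdorff quotient and reflections of a given space are unique up to homeomorphism (as remarked in the text), this identification is automatic. There is thus no genuine difficulty: the statement is a purely formal consequence of Theorem \ref{teo:koppwil} together with the shape invariance of the Hausdorff reflection.
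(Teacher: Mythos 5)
Your proposal is correct and follows exactly the paper's intended argument: the corollary is stated there as an immediate consequence of Theorem \ref{teo:koppwil} together with the preceding result that the Hausdorff reflection $\mu_X:X\rightarrow X_H$ induces the identity morphism in shape, which is precisely the chain $X\cong L_H$, $\textrm{Sh}(L_H)=\textrm{Sh}(L)$ that you spell out. Your closing remark on the uniqueness of reflections up to homeomorphism is a reasonable (if unstated in the paper) tidying of the identification between the two notions of Hausdorff reflection.
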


In an attempt of understanding better the Hausdorff reflection of an inverse system of spaces, Kopperman and Wilson proved that the original space is not only the Hausdorff reflection but the set of closed points of the inverse limit. We can prove the same in our construction.
\begin{prop}
For every compact metric space $X$ and every \textsc{fas} of $X$, the space $\mathcal{X}^*$ is just the set of closed points of $\mathcal{X}$. Moreover it is its Hausdorff reflection $\mathcal{X}^*=X_H$.
\end{prop}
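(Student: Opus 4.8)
The plan is to prove both assertions by exploiting the maximal elements $\{X_n^*\}=\phi(x)$ constructed in the proof of Theorem \ref{teo:limitefinito}, together with the explicit description of closures in the upper semifinite topology. Recall that in each finite space $\U{n}\subset 2^{A_n}_u$ one has $\overline{\{C\}}=\{D\in\U{n}:C\subset D\}$, and that in a product the closure of a point is the product of the coordinate closures. Hence for a point $\{C_n\}\in\mathcal{X}$ the closure of $\{C_n\}$ inside $\mathcal{X}$ equals $\{\{D_n\}\in\mathcal{X}:C_n\subset D_n\ \text{for all }n\}$. This order-theoretic reading of closures is the single fact that drives everything.

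First I would show $\mathcal{X}^*$ is contained in the set of closed points. Fix $x\in X$, let $\{X_n^*\}=\phi(x)$, and suppose $\{D_n\}\in\mathcal{X}$ satisfies $X_n^*\subset D_n$ for every $n$. Let $y=\varphi(\{D_n\})$ and pick $a_n\in X_n^*\subset D_n$. Using Remark \ref{distancialimite} we have $\textrm{d}_{\textrm{H}}(x,X_n^*)<\varepsilon_n$ and $\textrm{d}_{\textrm{H}}(y,D_n)<\varepsilon_n$, so $\textrm{d}(x,a_n)<\varepsilon_n$ and $\textrm{d}(y,a_n)<\varepsilon_n$, whence $\textrm{d}(x,y)<2\varepsilon_n\to 0$ and $y=x$. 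Thus $\{D_n\}\in\varphi^{-1}(x)$, and the maximality of $\{X_n^*\}$ forces $D_n\subset X_n^*$, so $D_n=X_n^*$. Therefore $\{X_n^*\}$ is a closed point. Conversely, if $\{C_n\}$ is any closed point, set $x=\varphi(\{C_n\})$; by maximality $C_n\subset X_n^*$ for all $n$, so $\{X_n^*\}$ lies in the closure of $\{C_n\}$, which equals $\{\{C_n\}\}$ by closedness, giving $\{C_n\}=\{X_n^*\}\in\mathcal{X}^*$. This establishes that $\mathcal{X}^*$ is exactly the set of closed points of $\mathcal{X}$.

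For the Hausdorff reflection I would identify the relation $R_3$ (whose quotient is $\mathcal{X}_H$) with the fibers of $\varphi$. Since $\varphi:\mathcal{X}\rightarrow X$ maps into the Hausdorff space $X$, any $R_3$-related pair has equal $\varphi$-value. For the reverse inclusion, suppose $\varphi(\{C_n\})=\varphi(\{D_n\})=x$. Then $C_n\subset X_n^*$ and $D_n\subset X_n^*$ for all $n$, so $\{X_n^*\}$ belongs to the closure of $\{C_n\}$ and to the closure of $\{D_n\}$. Because $\{X_n^*\}$ is a specialization of $\{C_n\}$, every open neighborhood of $\{X_n^*\}$ contains $\{C_n\}$, hence meets every neighborhood of $\{C_n\}$; that is $\{C_n\}\,R_1\,\{X_n^*\}$, and likewise $\{D_n\}\,R_1\,\{X_n^*\}$. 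The chain $\{C_n\}\,R_1\,\{X_n^*\}\,R_1\,\{D_n\}$ yields $\{C_n\}\,R_2\,\{D_n\}$, and since $R_1\subset R_3$ (a map to a Hausdorff space cannot separate an $R_1$-pair) and $R_3$ is transitive, we conclude $\{C_n\}\,R_3\,\{D_n\}$. Thus the $R_3$-classes are precisely the fibers of $\varphi$.

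Finally, $\mathcal{X}$ is compact as a closed subspace of a product of finite spaces, and $X$ is Hausdorff, so the continuous surjection $\varphi$ is closed and hence a quotient map; consequently $\mathcal{X}_H=\mathcal{X}/R_3$ is homeomorphic to $X$, which is homeomorphic to $\mathcal{X}^*$ through $\phi$. The retraction $r=\phi\circ\varphi:\mathcal{X}\rightarrow\mathcal{X}^*$ is then a quotient map onto a Hausdorff space whose fibers are exactly the $R_3$-classes, so it satisfies the universal property characterizing the Hausdorff reflection, and uniqueness of reflections gives $\mathcal{X}^*=X_H$. I expect the main obstacle to be the inclusion of the $\varphi$-fibers into the $R_3$-classes: the crux is recognizing that the maximal element $\{X_n^*\}$ acts as a common specialization linking any two points of a fiber, so that the abstract, map-theoretic relation $R_3$ is realized concretely through the inclusion order $C\subset D$ governing the upper semifinite topology.
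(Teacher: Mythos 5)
Your proof is correct in substance and takes essentially the same route as the paper: both halves rest on the maximality of $\phi(x)=\{X_n^*\}$ combined with the order-theoretic description of closures in the upper semifinite topology, and your closing step (the retraction $r=\phi\circ\varphi$ is a quotient map onto the Hausdorff space $\mathcal{X}^*$, so every continuous map into a Hausdorff space factors through it) is exactly the paper's. The only genuine variation is that where the paper argues directly that any continuous $\alpha:\mathcal{X}\rightarrow Y$ with $Y$ Hausdorff satisfies $\alpha(X^*)\in\overline{\{\alpha(C)\}}\cap\overline{\{\alpha(C')\}}=\{\alpha(C)\}\cap\{\alpha(C')\}$ for $C,C'$ in a fiber of $\varphi$, you pass through the relations $R_1,R_2,R_3$ and identify the fibers of $\varphi$ with the $R_3$-classes; both arguments exploit $\{X_n^*\}$ as a common specialization of all points of its fiber, so this is a cosmetic difference rather than a different method.

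One assertion, however, is false as stated: that ``$\mathcal{X}$ is compact as a closed subspace of a product of finite spaces.'' In this non-$T_1$ setting the inverse limit is in general \emph{not} closed in $\prod_n U_{2\varepsilon_n}(A_n)$. Indeed, take any $\{C_n\}\in\mathcal{X}$, an index $n_0$ and a point $a\in A_{n_0}\setminus C_{n_0}$ with $\diam(C_{n_0}\cup\{a\})<2\varepsilon_{n_0}$ (such exist except in degenerate cases), and put $D_{n_0}=C_{n_0}\cup\{a\}$, $D_n=C_n$ for $n\neq n_0$. Open sets of the upper semifinite topology are downward closed under inclusion, and $C_n\subset D_n$ for all $n$, so every basic neighborhood of $\{D_n\}$ in the product contains $\{C_n\}\in\mathcal{X}$; thus $\{D_n\}$ lies in the closure of $\mathcal{X}$, yet $p_{n_0,n_0+1}(D_{n_0+1})=C_{n_0}\neq D_{n_0}$, so $\{D_n\}\notin\mathcal{X}$. (The space $\mathcal{X}$ \emph{is} compact---inverse limits of finite spaces always are---but this needs a different argument, e.g.\ via convergence of ultrafilters, not closedness in the product.) Fortunately the entire clause is redundant in your proof: you invoke compactness only to conclude that $\varphi$ is closed and hence a quotient map, but your last sentence already shows that $r=\phi\circ\varphi$ is a quotient map (being a retraction) onto the Hausdorff space $\mathcal{X}^*$ with fibers the $R_3$-classes, and that alone yields the universal property; moreover $\varphi=\phi^{-1}\circ r$ is then automatically a quotient map as well. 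Deleting the compactness claim leaves a complete and correct proof.
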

\begin{proof}
First of all, we are going to characterize, for every $x\in X$ the point of the inverse limit $X^*=\phi(x)$. It is the set $$X^*=\bigcap_{C\in\varphi^{-1}(x)}\overline{\{C\}}.$$ We divide the proof:
\begin{enumerate}
\item[$(\subset)$]We show here that if $\varphi(X^*)=\varphi(C)=x$ (notation: $X^*=(X_1^*,X_2^*,\ldots)$), then $X^*\in\overline{\{C\}}$. Let $X^*\in V$ an open neighborhood in $\mathcal{X}$. Then, there exists an open neighborhood $$X^*\in U=(2^{X_1^*}\times 2^{X_2^*}\times\ldots\times 2^{X_r^*}\times U_{2\epsilon_{r+1}}(A_{r+1})\times\ldots)\cap\mathcal{X}.$$ But, obviously, $C\in U$, so $C\in U\cap\{C\}\neq\emptyset$.
\item[$(\supset)$]Let $D=(D_1,D_2,\ldots)\in\bigcap_{C\in\varphi^{-1}(x)}\overline{\{C\}}$.Then $\{D_n\}$ converges to $x$ in the Hausdorf metric. So, for every $D\in U$ open neighborhood, we have that $U\cap\{X^*\}\neq\emptyset$. In particular, for every $r\in\mathbb{N}$ we have neighborhoods of the form $$(2^{D_1}\times 2^{D_2}\times\ldots\times 2^{D_r}\times U_{2\epsilon_{r+1}}(A_{r+1})\times\ldots)\cap\mathcal{X},$$ where $X^*$ belongs. So, for every $r\in\mathbb{N}$ we have $X_r^*=D_r$, hence $X^*=D$.
\end{enumerate}

Now to show that $\mathcal{X}^*$ is the set of closed points, first observe that every $X^*\in\mathcal{X}^*$ is $X^*=\bigcap_{C\in\varphi^{-1}(x)}\overline{\{C\}}$, so a closed set. On the other hand, if there is a closed point $C\in\mathcal{X}$, with $\varphi(C)=y$ then $Y^*\in\overline{\{C\}}=\{C\}$ so $C=Y^*\in\mathcal{X}^*.$

To show that $\mathcal{X}^*$ is actually the Hausdorff reflection of $\mathcal{X}$, let us consider a continuous map $\alpha:\mathcal{X}\rightarrow Y$ with $Y$a Hausdorff space. Consider two points $C,C'\in\mathcal{X}$ such that $\varphi(C)=\varphi(C')=x=\varphi(X^*)$, with $X^*\in\mathcal{X}^*$. Then, using the previous characterization of $X^*$, we have that $X^*\in\overline{\{C\}}\cap\overline{\{C'\}}$. Then, applying the map $\alpha$, and using that it is continuous and that $Y$ is Hausdorff, we obtain
\begin{eqnarray}
 \alpha(X^*)&\in&\alpha(\overline{\{C\}})\cap\alpha(\overline{\{C'\}})\subset\nonumber\\
 &\subset&\overline{\{\alpha(C)\}}\cap\overline{\{\alpha(C')\}}=\nonumber\\
 &=&\{\alpha(C)\}\cap\{\alpha(C')\},\nonumber
\end{eqnarray}
so, $\alpha(X^*)=\alpha(C)=\alpha(C')$. Now, we claim that the map $\phi\cdot\varphi:\mathcal{X}\rightarrow\mathcal{X}^*$ is actually the Hausdorff reflection of $\mathcal{X}$. This is so, because the map $\alpha\mid_{\mathcal{X}^*}:\mathcal{X}^*\rightarrow Y$ makes the diagram
$$\xymatrix @C=15mm{\mathcal{X}\ar[r]^{\phi\cdot\varphi} \ar[d]_{\alpha} & \mathcal{X}^*\ar[dl]^{\alpha\mid_{\mathcal{X}^*}}\\
Y &}$$
commutative and $\alpha\mid_{\mathcal{X}^*}$ is continuous since $\phi\cdot\varphi$ is a retraction and hence a identification$\qedhere$
\end{proof}
\section{Inverse Persistence}
It is clear that a persistence module is nothing but an inverse sequence of vector spaces and homomorphisms reversed, in the sense that the sequence grows in the opposite direction. Moreover, if we "cut" the inverse sequence at some step, we obtain a persistence module of finite type and, hence, the corresponding barcode. In this way, we can obtain persistence modules from inverse sequences of spaces and this makes a connection between shape theory and persistent homology. In this section, we present a different way of persistence in which the persistence modules are obtained from inverse sequences of polyhedra. 

Let us consider a compact metric space $X$ and a polyhedral approximative sequence obtained by the main construction as in section \ref{mainconstruction} $\conjunto{K_n,p_{nn+1}}$. Although all these inverse sequences of polyhedra are the realizations of inverse sequences of simplicial complexes and simplicial maps between them, they are not filtrations of simplicial complexes, even obviating the finiteness condition, since the maps involved are not the inclusion. But, if we consider, for any $p\in\mathbb{N}$, and a field $F$, the induced homology inverse sequences $\conjunto{H_p(K_n,p_{nn+1};F)}$ are persistence modules (with maps not induced by the inclusion) of simplicial complexes, but by means of proximity, as indicated in the quoted section.

As a compact metric space, we can perform this construction to a point cloud $\mathbb{X}$. If we consider that this point cloud is a sample, possibly with some noise, of a compact metric space $X$, the results obtained in section \ref{homotopicalreconstruction} and in \cite{Mon} about the homotopical and shape properties recovered in the inverse limit of any \textsc{fas} and \textsc{pas} of $X$ make us think that the topological properties obtained applying this method in $\mathcal{X}$ will represent topological properties of the space $X$. 

Consider a \textsc{fas}, $\fas$ of $\mathbb{X}$. Since $\mathbb{X}$ is finite, $\fas$ has only a finite number of different approximations: There is an integer $s$ such that, for every $n\geqslant s$,
$$2\varepsilon_n<\max\conjunto{\dist{x}{y}:x,y\in X},$$ and hence $A_n=A_{n+1}=X$, $U_{2\varepsilon_n}(A_n)=U_{2\varepsilon_{n+1}}(A_{n+1})$ and $p_{nn+1}=id$. So, we have only a finite number $s$ of "changes" in the sequence, that we can be written as 
$$U_{\varepsilon_1}(A_1)\xleftarrow{\enspace p_{12}\enspace}U_{\varepsilon_2}(A_2)\xleftarrow{}\ldots\xleftarrow{}U_{\varepsilon_{s-1}}(A_{s-1})\xleftarrow{\enspace p_{s-1s}\enspace}U_{\varepsilon_s}(A_s).$$
Now, consider the induced polyhedral approximative sequence of section \ref{mainconstruction}, 
$$K_1\xleftarrow{\enspace p_{12}\enspace}K_2\xleftarrow{}\ldots\xleftarrow{} K_{s-1s}\xleftarrow{\enspace p_{s-1s}\enspace} K_s.$$ Its induced $p$-th singular homology sequence (for a field $F$)
$$H_p(K_1)\xleftarrow{\enspace p_{12}\enspace}H_p(K_2)\xleftarrow{}\ldots\xleftarrow{} H_p(K_{s-1s})\xleftarrow{\enspace p_{s-1s}\enspace} H_p(K_s)$$
is indeed a persistence module of finite type, so it has an associated barcode $\mathcal{B}_{\mathbb{X}}$ that we will call \emph{inverse barcode}. We call this procedure Inverse Persistence. In Figure \ref{inversepersistence} we represent the Inverse Persistence process. We list some differences between Persistence and Inverse Persistence.
\begin{enumerate}
\item The simplicial complexes used in regular persistence are constructed using all the set of points of the point cloud for every level. In contrast, the simplicial complexes constructed in the inverse persistence are based on subsets of the point cloud. Moreover, we need to add more points to the finite spaces, in order to make the maps between them continuous.
\item The maps used in the finite sequence of polyhedra constructed from the point cloud are always inclusions in regular persistence, but they are not in inverse persistence. Although they are not inclusions, they are consistent is some sense because they are defined in terms of proximity and, as we have seen, they are constructed in a way that, carried until the infinity, captures the homotopical and shape properties of the space.
\end{enumerate}

For the analysis of the inverse persistence we propose the following steps.
\begin{enumerate}
\item Formalize the algorithm outlined here and compare the computational cost with the standard algorithms for persistence on point clouds.
\item Compare the obtained  inverse persistence modules and compare them with the regular persistence modules in terms of the concept of \emph{interleaving}, introduced in \cite{Cproximity} by Chazal et al.
\item Compare the obtained barcodes from inverse persistence with the ones obtained by regular persistence using the \emph{bottleneck} distance on barcodes (see \cite{Estability} for definition and main results concerning this distance).
\end{enumerate}

It is expected that the inverse persistence modules have the same behaviour as regular ones in terms of stability (see \cite{Estability, Cproximity}), because of the shape theoretical framework where they are constructed. We hope this shape approach to persistence to be suitable for real world applications because of its constructibility and its good properties concerning stability.
\begin{figure}
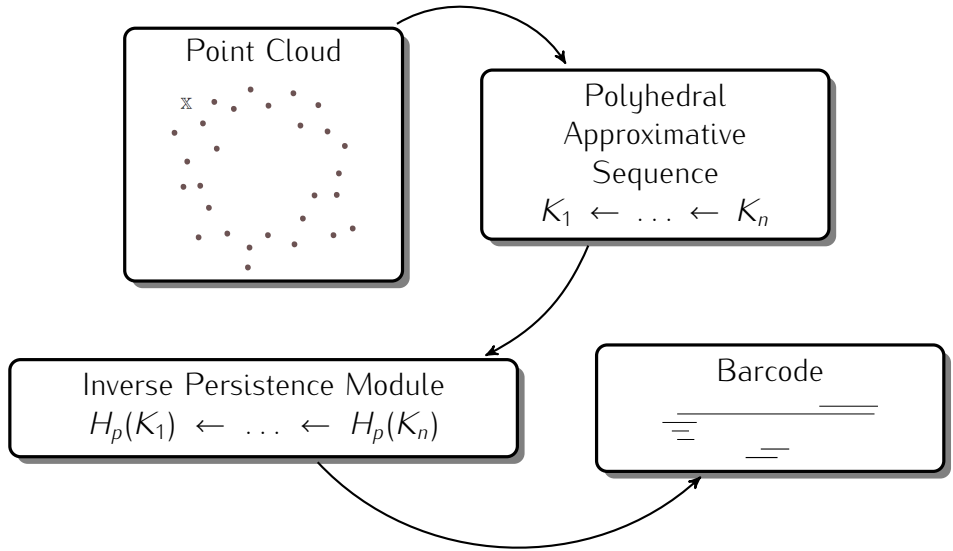
\label{inversepersistence}
\begin{center}
\begin{tikzpicture}[node distance=1cm, auto,]
 \node[punkt] (pointcloud) {Point Cloud\\ \vspace{0.3cm}\includegraphics[scale=0.25]{pers3.pdf}};
 \node[punkta, inner sep=5pt,right=1cm of pointcloud](filtration) {Polyhedral\\ Approximative\\ Sequence\\$K_1\leftarrow\ldots\leftarrow K_n$};
 \node[punktaa, inner sep=5pt,below=1cm of pointcloud](module) {Inverse Persistence Module\\$H_p(K_1)\leftarrow\ldots\leftarrow H_p(K_n)$};
 \node[punktaaa, inner sep=5pt,right=1cm of module](barcode) {Barcode\\ \vspace{0.3cm}\includegraphics[scale=0.40]{barcode.pdf}};
  \draw [pil,bend left=45](pointcloud) edge (filtration);
  \draw [pil,bend left=20](filtration) edge (module);
  \draw [pil,bend right=45](module) edge (barcode);
\end{tikzpicture}
\end{center}
\caption{Process of Inverse Persistence}
\end{figure}

Moreover, we can assign inverse barcodes to every compact space $X$ and every pair of integers $(n,m)$ with $n<m$ using finite parts of the polyhedral approximative sequences
$$H_p(K_n)\longleftarrow\ldots\leftarrow H_p(K_m)$$
We expect inverse barcodes comming from compact metric spaces and (possibly noisy) samples of them to be quite similar in some yet non defined sense. We compute an inverse barcode in the following section.
\subsection{Example: The computational Warsaw Circle}
\label{sec:computationalwarsawcircle}
In this section we will perform the main construction on the Warsaw circle in order to apply the theory previously developed. The Warsaw circle is the paradigmatic example of shape theory and we shall see how inverse persistence works in this case, capturing the shape properties (such as the \v{C}ech homology groups) of this space. See figure \ref{warsawdef}. For computational purposes, we are going to define and work with the following homeomorphic copy of the Warsaw circle. Consider, in $\mathbb{R}^2$, the following segments\footnote{The notation for the segments is $(a,b)-(c,d)$, meaning the segment joining these two points.}: 
\begin{eqnarray}
a_n=\left(\frac{1}{2^{2n-2}},1\right)-\left(\frac{1}{2^{2n-1}},1\right),\nonumber\\
b_{n1}=\left(\frac{1}{2^{2n-2}},\frac{1}{2}\right)-\left(\frac{1}{2^{2n-2}},1\right),\nonumber\\
b_{n2}=\left(\frac{1}{2^{2n-1}},1\right)-\left(\frac{1}{2^{2n-1}},\frac{1}{2}\right),\nonumber\\
c_n=\left(\frac{1}{2^{2n-1}},\frac{1}{2}\right)-\left(\frac{1}{2^{2n}},\frac{1}{2}\right).\nonumber
\end{eqnarray}
Then, the computational Warsaw circle is 
$$\mathcal{W}=(1,0)-(0,0)-(1,0)-(1,1)\bigcup_{\todon}a_n\bigcup_{n\in\mathbb{N}\backslash\{1\}}b_{n1}\bigcup_{\todon}b_{n2}\bigcup_{\todon}c_n.$$
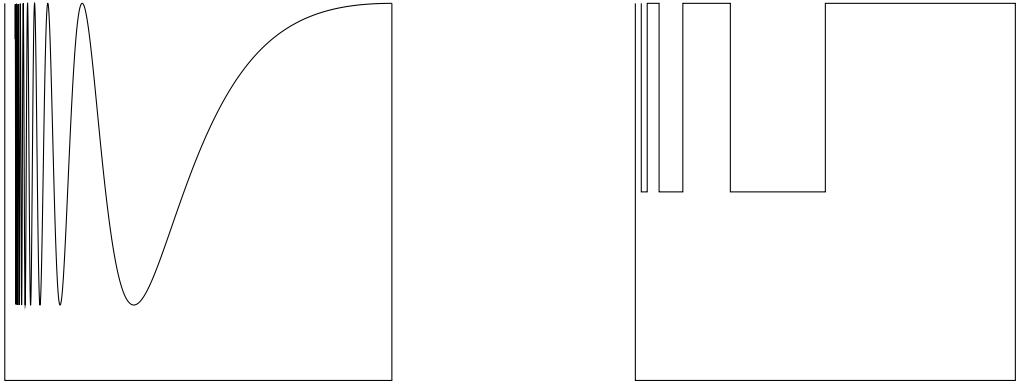
\begin{figure}[h!]
\begin{minipage}[c]{0.45\textwidth}
\begin{center}
\begin{tikzpicture}[scale=2,domain=0:1,x=4cm]
\FPeval{\w}{2/(3.1415)}
  \draw[domain=0.017:\w,samples=5000] plot (\x, {sin((1/\x)r)});
  \draw (0,1)--(0,-1.5)--(\w,-1.5)--(\w,{sin((1/\w)r)});
\end{tikzpicture}
\end{center}
\end{minipage}
\   \ \hfill
\begin{minipage}[c]{0.45\textwidth}
\begin{center}
\begin{tikzpicture}[scale=5,domain=0:1]
\draw (0,1)--(0,0)--(1,0)--(1,1)--(1/2,1)--(1/2,1/2)--(1/4,1/2)--(1/4,1)--(1/8,1)--
(1/8,1/2)--(1/16,1/2)--(1/16,1)--(1/32,1)--(1/32,1/2)--(1/64,1/2)--(1/64,1);
\end{tikzpicture}
\end{center}
\end{minipage}
\caption{The Warsaw circle and the computational Warsaw circle.}
\label{warsawdef}
\end{figure}
We now perform the general construction on $\mathcal{W}$. The diameter of $\mathcal{W}$ is $M=\sqrt{2}$. Then, we can select $\epsilon_1=2\sqrt{2}>M$, and $A_1=\{(0,0)\}$, so $\gamma_1=\sqrt{2}$. In the second step, we take $\epsilon_2=\frac{\sqrt{2}}{2^3}<\frac{\epsilon_1-\gamma_1}{2}=\frac{\sqrt{2}}{2}$. To get an $\epsilon_2$-approximation of $\mathcal{W}$, we explain the process better than giving just the coordinates of the points. Consider the intersection of a grid of side $\frac{1}{2^{3-1}}$, $G_2=\left\lbrace(\frac{l}{2^{3-1}},\frac{m}{2^{3-1}})\in\mathbb{R}^2: l,m\in\mathbb{Z}\right\rbrace$  with $\mathcal{W}$. See figure \ref{2app}. 
\begin{figure}[h!]
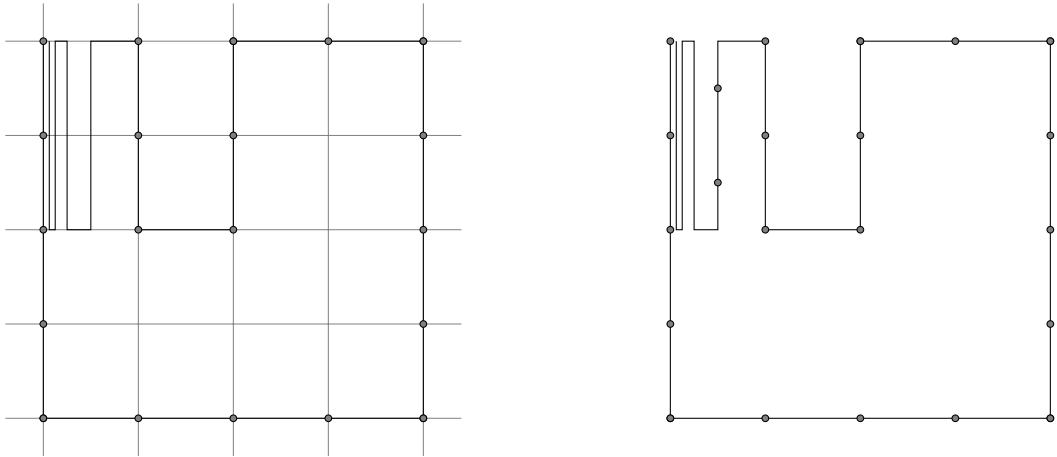

\begin{minipage}[c]{0.45\textwidth}
\begin{center}
\warsawApp{3}{1}
\end{center}
\end{minipage}
\   \ \hfill
\begin{minipage}[c]{0.45\textwidth}
\begin{center}
\warsawApp{3}{0}
\end{center}
\end{minipage}
\caption{The intersection of the grid $G_2$ with $\mathcal{W}$ and the $\epsilon_2$-approximation of $\mathcal{W}$.}
\label{2app}
\end{figure}
Every point of $\mathcal{W}$, not in the upper left square of the grid, and the one just below it, are at distance less or equal to $\frac{1}{2^3}<\epsilon_2$. Concerning the two mentioned squares, we see that every point of $\mathcal{W}$ inside them are at distance less than $\epsilon_2$, except the two centers of the squares, which are exactly at this distance. So, we add these two points and then have an $\epsilon_2$ approximation of $\mathcal{W}$, $$A_2=\left(G_2\cap\mathcal{W}\right)\cup\left\lbrace\left(\frac{1}{2^3},1-\frac{1}{2^3}\right),\left(\frac{1}{2^3},1-\frac{3}{2^3}\right)\right\rbrace.$$
From the picture, we can easily see that $\gamma_2=\frac{1}{2^3}$. We pick\footnote{We want some regularity on the epsilon approximations. All of them will be of the form $\frac{\sqrt{2}}{2^k}$. In this case, there is no $k$ lower than 6 in the inequality. This will be proven for the general case, later.} $\epsilon_3=\frac{\sqrt{2}}{2^6}<\frac{\epsilon_2-\gamma_2}{2}=\frac{\sqrt{2}-1}{2^4}$. To obtain an $\epsilon_3$-approximation of $\mathcal{W}$, we proceed as before. Consider the grid of side $\frac{1}{2^{6-1}}$, $G_3=\left\lbrace(\frac{l}{2^{6-1}},\frac{m}{2^{6-1}})\in\mathbb{R}^2: l,m\in\mathbb{Z}\right\rbrace$ and its intersection with $\mathcal{W}$. Then add the centers of the upper left square of the grid, and the $15=2^4-1$ below it (16 points of $\mathcal{W}$ in total), to obtain an $\epsilon_3$ approximation of $\mathcal{W}$ (see figure \ref{3app}),
$$A_3=\left(G_3\cap\mathcal{W}\right)\cup\left\lbrace\left(\frac{1}{2^6},1-\frac{1}{2^6}\right), \left(\frac{1}{2^6},1-\frac{3}{2^6}\right),\ldots,\left(\frac{1}{2^6},1-\frac{31}{2^6}\right)\right\rbrace.$$
\begin{figure}[h!]
\begin{minipage}[c]{0.45\textwidth}
\begin{center}
\warsawApp{6}{1}
\end{center}
\end{minipage}
\   \ \hfill
\begin{minipage}[c]{0.45\textwidth}
\begin{center}
\warsawApp{6}{0}
\end{center}
\end{minipage}
\caption{The intersection of the grid $G_3$ with $\mathcal{W}$ and the $\epsilon_3$ approximation of $\mathcal{W}$.}
\label{3app}
\end{figure}
Now, it is again clear from the picture, that $\gamma_3=\frac{1}{2^6}$ so we can continue this process to the infinity in the same way. In general, let $\epsilon_n=\frac{\sqrt{2}}{2^{3n-3}}$. Consider the grid of side $\frac{1}{2^{3n-4}}$, $G_n=\left\lbrace\left(\frac{l}{2^{3n-4}},\frac{m}{2^{3n-4}}\right)\in\mathbb{R}^2: l,m\in\mathbb{Z}\right\rbrace$. Then, its intersection with $\mathcal{W}$ and the following $2^{3n-5}$ points, form an $\epsilon_n$ approximation, 
$$A_n=\left(G_n\cap\mathcal{W}\right)\cup\left\lbrace\left(\frac{1}{2^{3n-3}},1-\frac{2k-1}{2^{3n-3}}\right): k=1,\ldots,2^{3n-5}\right\rbrace.$$
It is clear that, again, $\gamma_n=\frac{1}{2^{3n-3}}$ and $\delta_n=\frac{\sqrt{2}}{2^{3n-3}}$. So, writing\footnote{The term $3n-3$ relates the exponent of the denominator with the subindex of each $\epsilon$. We use the $m$ notation for a moment to understand how the denominator is increased in each step without perturbations of another notations.} $m=3n-3$, we need $$\epsilon_{n+1}<\frac{\epsilon_n-\gamma_n}{2}=\frac{\sqrt{2}-1}{2^{m+1}}.$$ We want $\epsilon_{n+1}$ to be of the form $\frac{\sqrt{2}}{2^k}$, so we are looking for $k\in\mathbb{N}$, such that, $\frac{\sqrt{2}}{2^k}<\frac{\sqrt{2}-1}{2^{m+1}}$, i.e., $2^{k-(m+1)}>2+\sqrt{2}$. We can estimate $2^{k-(m+1)}>2+\sqrt{2}>2$ so $k>m+2$. But, actually, $k=m+2$ does not satisfy the first inequality, so we can take any $k\geqslant m+3$, and hence, we choose $\epsilon_{n+1}=\frac{\sqrt{2}}{2^{m+3}}=\frac{\sqrt{2}}{2^{3n}}=\frac{\sqrt{2}}{2^{3(n+1)-3}}$. It is clear, that we can consider an $\epsilon_{n+1}$-approximation as before, intersecting the grid of side $\frac{1}{2^{3(n+1)-4}}=\frac{1}{2^{3n-1}}$, $G_{n+1}=\left\lbrace\left(\frac{l}{2^{3n-1}},\frac{m}{2^{3n-1}}\right)\in\mathbb{R}^2: l,m\in\mathbb{Z}\right\rbrace$ with $\mathcal{W}$ and add $2^{3(n+1)-5}=2^{3n-2}$ points:
$$A_{n+1}=\left(G_{n+1}\cap\mathcal{W}\right)\cup\left\lbrace\left(\frac{1}{2^{3n}},1-\frac{2k-1}{2^{3n}}\right): k=1,\ldots,2^{3n-2}\right\rbrace.$$ Then $\gamma_{n+1}=\frac{1}{2^{3(n+1)-3}}=\frac{1}{2^{3n}}$ and the process is proved to work by induction.

Now, we focus on the Alexandrov-McCord sequence related to this finite approximative sequence. The finite space $A_1$ is just a point, so its associated simplicial complex is just a vertex. In the second step, we have a more interesting case. In figure \ref{3Pol},
\begin{figure}[h!]
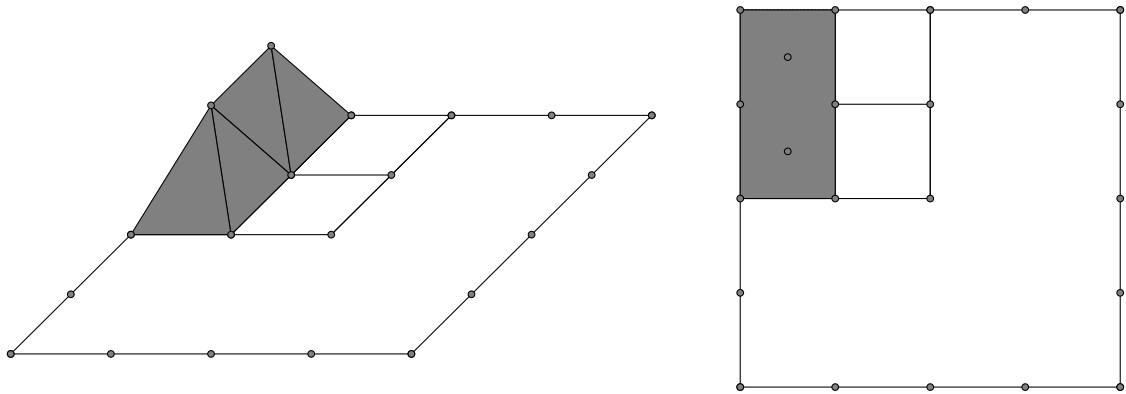

\begin{minipage}[c]{0.38\textwidth}
\begin{center}
\warsawTdPol{3}{30}
\end{center}
\end{minipage}
\   \ \hfill
\begin{minipage}[c]{0.38\textwidth}
\begin{center}
\warsawPol{3}{5}
\end{center}
\end{minipage}
\caption{The realization of the simplicial complex $\mathcal{V}_{2\epsilon_2}(A_2)$ in two perspectives: Lateral and Aerial.}
\label{3Pol}
\end{figure}
we have depicted the polyhedron $\mathcal{V}_{2\epsilon_2}(A_2)$ in two different perspectives. The barycentric subdivision of this polyhedron is exactly the realization of the simplical complex $\mathcal{K}(U_{2\epsilon_2}(A_2))=\mathcal{V}'_{2\epsilon_2}(A_2)$. Actually, the vertices that are not depicted but belong to the subdivision are the points of the space $U_{2\epsilon_2}\backslash A_2$. The 1-simplices of this polyhedron are clear from the picture. But there is more structure. First of all there are two empty squares. At their left, there are two piramids whose cusps represent the points added to the intersection of the grid and $\mathcal{W}$. Between the two piramids, there is a tetrahedron sharing one face with each one of them. The four points of the tetrahedron are the two points added and the two points in common of the two squares (the base of each piramid), which, in the approximation, have diameter less than $2\epsilon_2$, so this tetrahedron is \comillas{filled}. We have to point out that the piramids are empty, that is, their four faces are simplices that are in the polyhedron, but there is no \comillas{solid} base. For the third step, we also depicted the polyhedron $\mathcal{V}_{2\epsilon_3}(A_3)$ (figure \ref{6Pol}), whose barycentric subdivision is $\mathcal{K}(U_{2\epsilon_3}(A_3))=\mathcal{V}'_{2\epsilon_3}(A_3)$. 
\begin{figure}[h!]
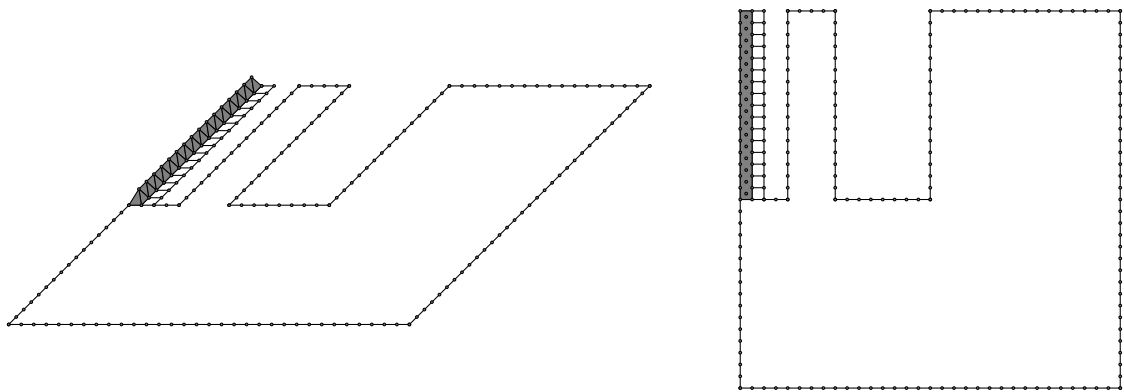

\begin{minipage}[c]{0.38\textwidth}
\begin{center}
\warsawTdPol{6}{30}
\end{center}
\end{minipage}
\   \ \hfill
\begin{minipage}[c]{0.38\textwidth}
\begin{center}
\warsawPol{6}{5}
\end{center}
\end{minipage}
\caption{The realization of the simplicial complex $\mathcal{V}_{2\epsilon_3}(A_3)$ in two perspectives: Lateral and Aerial.}
\label{6Pol}
\end{figure}
The structure of this polyhedron is the same as the previous one. The diference is that it has more 1-simplices, more empty squares ($2^4$) more piramids ($2^4$) and more tetrahedrons ($2^4-1$). In general, for any $\epsilon_n$ approximation we will have the same structure, with $2^{3n-5}$ squares and piramids and $2^{3n-5}-1$ tetrahedrons. Concerning the maps, we can use pictures to see where they send the points of the approximations, and the sets of those points, but we will focus our attention on the induced maps in homology which actually will tell us the behavior of the maps.  

We now study the previous sequence at the homological level. We will compute the first homology group with coefficients in $\mathbb{Z}$ (with notation $H_1(K):=H_1(K;\mathbb{Z})$) of each polyhedron of the sequence and how the induced homology maps work. For the first approximation, everything is trivial. For the second one, we know that $\mathcal{V}_{2\epsilon_2}(A_2)$ (in the figure) has the same homotopy type as $\mathcal{K}(U_{2\epsilon_2}(A_2))$. It has three 1-cycles: The \comillas{big} one and the two little squares. There is no more 1-homology on this complex. This is clear from the aerial perspective in figure \ref{3Pol}. So, the homology group of this polyhedron is just three copies of $\mathbb{Z}$, which we denote $H_1(\mathcal{K}(U_{2\epsilon_2}(A_2)))\simeq\mathbb{Z}^3$. In the third step, as we can see in picture \ref{6Pol}, there is again one \comillas{big} 1-cycle, and $2^4$ small squares. I.e., a total of $2^4+1$ copies of $\mathbb{Z}$, so $H_1(\mathcal{K}(U_{2\epsilon_3}(A_3)))\simeq\mathbb{Z}^{2^4+1}$. We are interested in the map induced in homology by the map $$p_{2,3}:\mathcal{K}(U_{2\epsilon_3}(A_3))\longrightarrow\mathcal{K}(U_{2\epsilon_2}(A_2)).$$ We need to study, for each 1-cycle, where the vertices are sent by the map\footnote{We can visualize the performance of the map by overlying the pictures of the two consecutive approximations, since the map acts in terms of proximity.} An easy reasoning shows that every vertex (included the non drawn ones) in the small 1-cycles of $\mathcal{K}(U_{2\epsilon_3}(A_3))$ are sent to null homologous cycles in $\mathcal{K}(U_{2\epsilon_2}(A_2))$ (let us say, they \comillas{fall} into the shaded part which is the contratible part). However, the big 1-cicle of $\mathcal{K}(U_{2\epsilon_3}(A_3))$ is mapped into the \comillas{big} one of $\mathcal{K}(U_{2\epsilon_2}(A_2))$ (actually, it is mapped into something bigger which retracts into this cycle). So, it is clear, that the induced map in homology, $$(p_{2,3})_*:H_1(\mathcal{K}(U_{2\epsilon_3}(A_3)))\longrightarrow H_1(\mathcal{K}(U_{2\epsilon_2}(A_2))),$$ sends the $2^4$ generators corresponding to the little squares to zero, and the generator of the \comillas{big} 1-cycle to the generator of the \comillas{big} one of the target. So, we get that $\textrm{Im}((p_{2,3})_*)=\mathbb{Z}$. It is readily seen that, if we consider the next step, it will happen the same. In general, the realization of $\mathcal{K}(U_{2\epsilon_n}(A_n))$ has $2^{3n-5}$ 1-cycles corresponding to little squares and one \comillas{big} 1-cycle. So, $H_1(\mathcal{K}(U_{2\epsilon_n}(A_n)))\simeq\mathbb{Z}^{2^{3n-5}+1}$. The map induced by $$p_{n,n+1}:\mathcal{K}(U_{2\epsilon_{n+1}}(A_{n+1}))\longrightarrow\mathcal{K}(U_{2\epsilon_n}(A_n))$$ in homology, $$(p_{n,n+1})_*:H_1(\mathcal{K}(U_{2\epsilon_{n+1}}(A_{n+1})))\longrightarrow H_1(\mathcal{K}(U_{2\epsilon_n}(A_n))),$$ sends the $2^{3n-2}$ 1-cycles corresponding to little squares of $\mathcal{K}(U_{2\epsilon_{n+1}}(A_{n+1}))$ to zero and the 1-cycle corresponding to the \comillas{big} one to the \comillas{big} one in the image $\mathcal{K}(U_{2\epsilon_n}(A_n))$. So, again, the image of the map is $\textrm{im}((p_{n,n+1})_*)=\mathbb{Z}$. So, we see that in each step, the \comillas{big} 1-cycle is the only non-trivial homology that comes from the image of the previous polyhedron. We could say that the \comillas{big} cycle is the only one that survives (or persists) in the whole sequence. In terms of the inverse limit, it is clear that the inverse limit of the inverse sequence induced in homology\footnote{Notation: $\mathcal{K}_n:=\mathcal{K}(U_{2\varepsilon_n}(A_n))$}
$$H_1(\mathcal{K}_1)\xleftarrow{(p_{0,1})_*} H_1(\mathcal{K}_2)\xleftarrow{(p_{1,2})_*}\ldots\xleftarrow{(p_{n-1,n})_*} H_1(\mathcal{K}_n)\xleftarrow{(p_{n,n+1})_*}H_1(\mathcal{K}_{n+1})\xleftarrow{(p_{n+1,n+2})_*}\ldots.$$ is $$\lim_{\leftarrow}\left\lbrace\mathcal{K}_n,(p_{n,n+1})_*\right\rbrace\simeq\mathbb{Z}.$$ Hence, the finite approximations are capturing the \v{C}ech homology of $\mathcal{W}$ in the limit. But, more important, the inverse persistence is able to capture it in a finite number of steps. In this particular example, we consider, for any $n<m$ the steps $n, n+1,\ldots,m$. Then, inverse persistence generate an inverse barcode as in figure \ref{barcode}.
\begin{figure}[h!]
\begin{center}
\begin{tikzpicture}
\draw [line width=0.4mm,  blue] (0,0) -- (10,0)
(0,2/10)--(1/2,2/10)
(0,6/10)--(1/2,6/10)
(0,8/10)--(1/2,8/10)
(1,1+2/10)--(1+1/2,1+2/10)
(1,1+6/10)--(1+1/2,1+6/10)
(1,1+8/10)--(1+1/2,1+8/10)
(10,2+2/10)--(10+1/2,2+2/10)
(10,2+6/10)--(10+1/2,2+6/10)
(10,2+8/10)--(10+1/2,2+8/10);
\draw 
node at (5,2) [] {$\ldots$}
node at (0,-1) [] {$n$}
node at (1,-1) [] {$n+1$}
node at (10,-1) [] {$m$}
node at (0,-1/2) [] {$[$}
node at (1,-1/2) [] {$|$}
node at (10,-1/2) [] {$]$}
node at (0,4/10) [] {{\tiny $\ldots$}}
node at (1,1+4/10) [] {{\tiny $\ldots$}}
node at (10,2+4/10) [] {{\tiny $\ldots$}};
\draw [line width=0.2mm,  black] (0,-1/2)--(10,-1/2);
   \end{tikzpicture}
\caption{The barcode $\mathcal{B}_{\mathcal{W}}$.}
\label{barcode}
\end{center}
\end{figure}
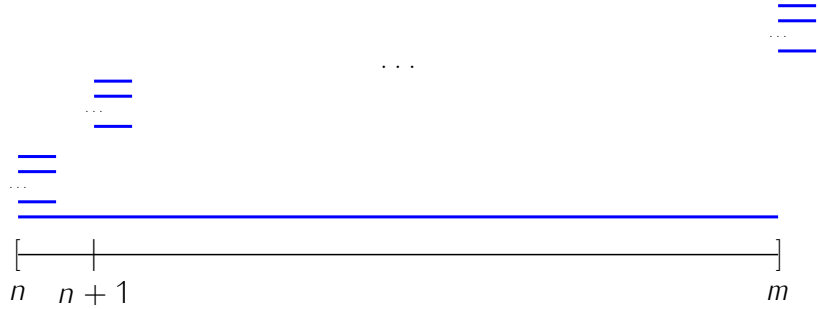
The largest line correspond to the generator of the \comillas{big} hole of $\mathcal{W}$, which is present in every step of the construction and preserved by the maps. Hence, it represents the first \v{C}ech homology group of $\mathcal{W}$. In the other hand, the short lines correspond to the generators of the \comillas{fake} and small homology groups that are created in the construction of $\mathcal{K}_n$ at each step. They are $2^{3i-5}$ generators for each $i=n,n+1,\ldots,m$ as we computed before. The $2^{3(n+1)-5}$ generators of $\mathcal{K}_{n+1}$ are mapped to zero in $\mathcal{K}_n$ and hence they do not generate any long line in the inverse barcode. Whence, the only feature detected by the inverse persistence is the only that actually exists. We hope that experiments carried out in noisy samples of $\mathcal{W}$ could reveal a good behavior of this construction in detecting difficult features of spaces.
\newpage
\addcontentsline{toc}{chapter}{References}
\bibliographystyle{siam}
\bibliography{reconstruction}
\end{document}